\useunder{\uline}{\ul}{}
\colorlet{darkblue}{blue!70!black}
\colorlet{darkred}{red!70!black}
\colorlet{darkgreen}{green!70!black}
\colorlet{darkwhite}{white!65!black}
\colorlet{darkorange}{orange!70!black}
\colorlet{darkmagenta}{magenta!70!black}
\colorlet{pink}{green!20!magenta}
\colorlet{lightcyan}{cyan!15!white}
\colorlet{lightyellow}{yellow!30!white}
\colorlet{lightcyanb}{lightcyan!50!black}
\definecolor{purple}{rgb}{0.63, 0.36, 0.94} 
\numberwithin{equation}{section}
\newcommand{\xsum}[2]{
  \xy
  (0,.4)*{\displaystyle\sum};
  (0,-6)*{\scs #2};
  (0,-3.7)*{\scs #1};
  \endxy
}
\newtheorem{thm}{Theorem}[section]
\newtheorem{lemma}[thm]{Lemma}
\newtheorem{proposition}[thm]{Proposition}
\theoremstyle{definition}
\newtheorem{definition}[thm]{Definition}
\newtheorem{remark}[thm]{Remark}
\def\Z{{\mathbbm Z}}
\def\mf{\mathfrak}
 \def\1{\mathbbm{1}}%
\def\Id{\mathrm{Id}}
\def\Q{{\mathbbm Q}}
\newcommand{\scs}{\scriptstyle}
\def\cal{\mathcal}
\def\la{\langle}
\def\ra{\rangle}
\newcommand{\sE}{\mathcal{E}}
\newcommand{\sF}{\mathcal{F}}
\newcommand{\onel}{\1_{\lambda}}
\newcommand{\Ucat}{\mathcal{U}_Q}
\newcommand{\Hom}{{\rm Hom}}
\renewcommand{\to}{\rightarrow}
\newcommand{\maps}{\colon}
\newcommand{\END}{{\rm END}}
\newcommand{\elem}{e}
\newcommand{\Sym}{\operatorname{Sym}}
\newcommand{\witt}{{\mathfrak{W}}}
\newcommand{\LLn}[1][n]{{\mathsf{L}_{#1}}}
\newcommand{\ourwitt}{{\witt_{-1}^{\infty}}}
\renewcommand{\l}{\lambda}
\newif\ifshownode
\def\checkarg#1{
  \def\tempa{#1}\def\tempb{0}%
  \ifx\tempa\tempb%
    \shownodefalse%
  \else%
    \shownodetrue%
  \fi%
}
\newcommand{\hackcenter}[1]{
 \xy (0,0)*{#1}; \endxy}
\newcommandx{\CIRCLECUPweb}[7][1 = 0.5, 2 = <, 3 = 1, 4 = $\lambda$,5 = 3, 6 = 1, 7 = <]{\quad
\xy 
(0,0)*{
  \begin{tikzpicture}[, scale=#1]
\checkarg{#5}\ifshownode
    \draw[postaction={decoration={markings,
                        mark=at position 0.6 with {\arrow{#2}};    }, decorate}, color=green, ultra thick ] (0.5,1.25)arc (0:360:.75);
                        \node at  (-0.25, 2.25)[top, color = green]{\scriptsize $#5$};
    \else
    \draw[postaction={decoration={markings,
                        mark=at position 0.6 with {\arrow{#2}};    }, decorate}, thick ] (0.5,1.25)arc (0:360:.75);
    \fi
    \draw[postaction = { decoration={markings,mark=at position 0.15 with {\arrow{#7}},  mark=at position 0.85 with {\arrow{#7}}}, decorate},domain=-2:2, thick]  plot (\x, 0.5*\x*\x);
    \node at (0.5, 1.25) {\(\star\)};
    \node at  (0.5, 1.25)[right]{\scriptsize $#3$};
    \node at (2,0.75) {\scriptsize $#4$};
     \checkarg{#6}\ifshownode
    \node at (0, 0) {$\bullet$};
    \node at (0,0)[below]{\scriptsize $#6$};
    \fi
  \end{tikzpicture}}
\endxy
}
\newcommandx{\Xweb}{%
\xy 0;/r.30pc/:
  (0,0)*{\xybox{%
    (-4,-4)*{};(4,4)*{} **\crv{(-4,-1)&(4,1)}?(1)*\dir{>};
    (4,-4)*{};(-4,4)*{} **\crv{(4,-1)&(-4,1)}?(1)*\dir{>};
    (9,1)*{\scs \lambda};
    (-10,0)*{};(10,0)*{};
  }};
\endxy
}
\newcommandx{\TWOweb}[0]{\xy
(0,0)*{ \begin{tikzpicture}[scale=0.6]
    \draw[thick, ->]  (-1,-1) to (-1,1);
    \draw[thick, ->]  (0,-1) to (0,1);
    \node at (.8,-.4) {\scriptsize $\lambda$};
\end{tikzpicture}} \endxy}
\newcommandx{\CIRCLEweb}[7][1 = 0.5, 2 = <, 3 = 1, 4 = 0, 5 = black, 6 = 0, 7 = i]{
\xy 
(0,0)*{
  \begin{tikzpicture}[decoration={markings,
                        mark=at position 0.6 with {\arrow{#2}};    }, scale=#1]
    \checkarg{#6}\ifshownode
    \draw[postaction={decorate}, color=#5, ultra thick ] (0,0)arc (0:360:.75);
    \else
    \draw[postaction={decorate}, color=#5, thick ] (0,0)arc (0:360:.75);
    \fi
    \node at (-.75, -.75) {\(\star\)};
    \node at (-0.75,-0.75)[below]{\scriptsize $#3$};
    \checkarg{#4}\ifshownode
            \node at (0,0.75) {\scriptsize $#4$};
    \fi
    \checkarg{#7}\ifshownode
        \node at (-1.5,0.75) {\tiny $#7$};
    \fi
  \end{tikzpicture}}
\endxy
}
\newcommandx{\CUPweb}[7][1 = 1, 2 = <, 3 = 0, 4 = 0, 5 = black, 6 = 0, 7 = $i$]{
\xy 
(0,0)*{
  \begin{tikzpicture}[decoration={markings,
                     mark=at position 0.3 with {\arrow{#2}}, mark=at position 0.95 with {\arrow{#2}};  }, scale=#1]
    \checkarg{#6}\ifshownode
    \draw[postaction={decorate}, color=#5, ultra thick ](.75,2) .. controls ++(0,-.75) and ++(0,-.75) .. (0,2);
    \else
    \draw[postaction={decorate}, color=#5, thick] (.75,2) .. controls ++(0,-.75) and ++(0,-.75) .. (0,2);
    \fi
    \checkarg{#3}\ifshownode
    \node at (.4, 1.45) { $\bullet$};
    \node at (.4, 1.45)[below]{\scriptsize $#3$};
    \fi
    \checkarg{#4}\ifshownode
         \node at (-.2, 1.45) {\scriptsize $#4$};
        \else \node at (-.2, 1.45) {\scriptsize $\lambda$};
    \fi
     \checkarg{#7}\ifshownode
    \node at (.95,1.9) {\tiny $i$};
    \fi
  \end{tikzpicture}}
\endxy
}
\newcommandx{\Xwebwithtext}[8][1 = 0.8, 2 = 0, 3 = 0, 4 = 0, 5 = 0, 6 = $i$, 7 = $i$, 8 = $\lambda$]{
\xy
(0,0)*{
\begin{tikzpicture}[scale=#1]
    \draw[thick, ->] (0,0) .. controls ++(0,.55) and ++(0,-.5) .. (.75,1);
    \draw[thick, ->] (.75,0) .. controls ++(0,.5) and ++(0,-.5) .. (0,1);
    \checkarg{#2}\ifshownode
    \node at (.125, .3) {$\bullet$};
    \node at (.125, .3)[left] {\scriptsize #2};
    \fi
    \checkarg{#3}\ifshownode
    \node at (.625, .3) {$\bullet$};
   \node at (.625, .3)[right] {\scriptsize #3};
    \fi
    \checkarg{#4}\ifshownode
    \node at (.11, .7) {$\bullet$};
    \node at (.11, .7)[left] {\scriptsize #4};
    \fi
\checkarg{#5}\ifshownode
    \node at (.64, .7) {$\bullet$};
    \node at (.64, .7)[right] {\scriptsize #5};
    \fi
    \checkarg{#8}\ifshownode
    \node at (1.1,.5) {\scriptsize #8};
    \fi
    \node at (-.2,.10) {\tiny #6};
    \node at (.95,.10) {\tiny #7};
\end{tikzpicture}}
\endxy}
\newcommandx{\CUPwebwithtext}[7][1 = 1, 2 = <, 3 = 1, 4 = 0, 5 = black, 6 = 0, 7 = 0]{
\xy 
(0,0)*{
  \begin{tikzpicture}[decoration={markings, mark=at position 0.3 with {\arrow{#2}};  }, scale=#1]
    \checkarg{#6}\ifshownode
    \draw[postaction={decorate}, color=#5, ultra thick ] (1,2) .. controls ++(0,-.75) and ++(0,-.75) .. (-0.25,2);
    \else
    \draw[postaction={decorate}, color=#5, thick] (1,2) .. controls ++(0,-.75) and ++(0,-.75) .. (-0.25,2);
    \fi
    \checkarg{#3}\ifshownode
    \node at (.4, 1.45) {$\bullet$};
    \node at (.4, 1.45)[below]{\scriptsize $#3$};
    \fi
    \checkarg{#4}\ifshownode
        \node at (-.2, 1.45) {\scriptsize $#4$};
        \else \node at (-.2, 1.45) {\scriptsize $\lambda$};
    \fi
    \node at (1.2,1.9) {\tiny $i$};
        \checkarg{#7}\ifshownode
        \node at (0.25, 1.9) {\tiny $#7$};
        \else \node at (0.25, 1.9) {\tiny $p_{i,r}(\lambda)$};
    \fi
  \end{tikzpicture}}
\endxy
}
\newcommandx{\CAPweb}[6][1 = 1, 2 = <, 3 = 0, 4 = $\lambda$, 5 = black, 6 = 0]{
\xy 
(0,1)*{
  \begin{tikzpicture}[decoration={markings,
                     mark=at position 0.3 with {\arrow{#2}}, mark=at position 0.95 with {\arrow{#2}};  }, scale=#1]
    \checkarg{#6}\ifshownode
    \draw[postaction={decorate}, color=#5, ultra thick ] (.75,-2) .. controls ++(0,.75) and ++(0,.75) .. (0,-2);
    \else
    \draw[postaction={decorate}, color=#5, thick] (.75,-2) .. controls ++(0,.75) and ++(0,.75) .. (0,-2);
    \fi
    \checkarg{#3}\ifshownode
    \node at (.4,-1.45) {$\bullet$};
    \node at (.4,-1.45)[above]{\scriptsize #3};
    \fi
     \node at (-.2,-1.45) {\scriptsize #4};
    \node at (.95,-1.9) {\tiny $i$};
  \end{tikzpicture}}
\endxy
}
\newcommand{\iccbub}[2]{
\xybox{%
 (-6,0)*{};
  (6,0)*{};
  (-4,0)*{}="t1";
  (4,0)*{}="t2";
  "t2";"t1" **\crv{(4,6) & (-4,6)}; ?(.7)*\dir{}+(-2,0)*{\scs #2}
  ?(.05)*\dir{>} ?(1)*\dir{>};
  "t2";"t1" **\crv{(4,-6) & (-4,-6)};
   ?(.3)*\dir{}+(0,0)*{\star}+(0,-3)*{\scs {#1}};
}}
 \newcommandx{\CIRCLELEFTweb}[1][1 = \lambda-1]{\xy 0;/r.18pc/:
 (0,0)*{\icbub{#1}{}};
 \endxy}
 \newcommandx{\CIRCLERIGHTweb}[1][1 = \lambda-1]{\xy 0;/r.18pc/: 
 (0,0)*{\iccbub{#1}{}};
 \endxy}
\newcommand{\ticbub}[2]{%
  \xybox{%
    (-4,0)*{}="t1";%
    (4,0)*{}="t2";%
    {\xy
      "t2";"t1" **@{->}@/_1.0pc/@*{[green]@{|}} ?(.7)*+<0pt,-4pt>{\scs #2};
    \endxy}%
    {\xy
      "t2";"t1" **@{->}@/^1.0pc/@*{[green]@{|}} ?(.3)*+<0pt,-4pt>{\star}+(0,-3)*+<0pt,-4pt>{\scs {#1}};
    \endxy}%
  }%
}
\newcommand{\ticcbub}[2]{
\xybox{%
 (-6,0)*{};
  (6,0)*{};
  (-4,0)*{}="t1";
  (4,0)*{}="t2";
  "t2";"t1" **\crv{(4,6) & (-4,6)}; ?(.7)*\dir{}+(-2,0)*{\scs #2}
  ?(.05)*\dir{>} ?(1)*\dir{>};
  "t2";"t1" **\crv{(4,-6) & (-4,-6)};
   ?(.3)*\dir{}+(0,0)*{\star}+(0,-3)*{\scs {#1}};
}}
 \newcommandx{\thickCIRCLELEFTweb}[1][1 = \lambda-1]{\xy 0;/r.18pc/:
 (0,0)*{\ticbub{#1}{}};
 \endxy}
 \newcommandx{\thickCIRCLERIGHTweb}[1][1 = \lambda-1]{\xy 0;/r.18pc/: 
 (0,0)*{\ticcbub{#1}{}};
 \endxy}
\newcommand{\crossingEEfoam}[1][1]{
\xy
(0,0)*{
\begin{tikzpicture} [fill opacity=0.2,  decoration={markings,
                        mark=at position 0.6 with {\arrow{>}};    }, scale=#1]
     	\filldraw [fill=red] (2,2) rectangle (-2,-1);
     	\draw[very thick](-2,2)--(2,2);
      	\draw[ultra thick, red] (-.75,2) arc (180:360:.75);
	\draw[very thick] (-2,-1)--(2,-1);
     	\draw[ultra thick, red] (-.75,-1) arc (180:0:.75);
    \draw[very thick, postaction={decorate}] (2,-1) -- (1,-1);
	\path [fill=blue] (1.75,-2) arc (0:60:0.75) -- (0.375,-0.35) arc (60:0:0.75);
      	\path [fill=blue] (0.375,-0.35) arc (60:180:0.75) -- (0.25,-2) arc (180:60:0.75);
	\draw[very thick, postaction={decorate}] (.25,-2) -- (-.75,-1);
     	\draw[very thick, postaction={decorate}] (1.75,-2) --(.75,-1);
	\filldraw[ultra thick, red] [fill=blue] (1,-1.25) -- (0,-0.25) -- (0,1.25) -- (1,0.25) -- cycle;
	\path [fill=blue] (.25,1) arc (180:240:0.75) -- (-0.375,1.35) arc (240:180:0.75);
	\path [fill=blue] (0.625,0.35) arc (240:360:0.75) -- (.75,2) arc (0:-120:0.75);
\begin{scope}
    \clip (1,-2) rectangle (3,1);
    \filldraw[fill=red] (-1,-2) rectangle (3,1);
    \draw[very thick](-1,1)--(3,1);
    \draw[very thick](-1,-2)--(3,-2);
    \draw[ultra thick, red](.25,-2) arc (180:0:.75);
    \draw[ultra thick, red](.25,1) arc (180:360:.75);
\end{scope}
\draw[very thick, postaction={decorate}] (3,-2) -- (2,-2);
\draw[ultra thick, red] (.25,-2) arc (180:90:.75);
\draw[ultra thick, red] (.25,1) arc (180:270:.75);
\path [fill=red] (.25,-2) arc (180:90:.75) -- (1,-2) -- cycle;
\path [fill=red] (.25,1) arc (180:270:.75) -- (1,1) -- cycle;
\draw[very thick](.25,-2)--(3,-2);
\draw[very thick] (0.25,1)--(3,1);
     	\draw[very thick, postaction={decorate}] (.25,1) -- (-.75,2);
     	\draw[very thick, postaction={decorate}] (1.75,1) --(.75,2);
    \node[black,  fill opacity= 0.5] at (2.25, 2)[below,scale=0.7] {$\lambda +  2$};
    \node[black,  fill opacity= 0.5] at (3.25,1)[below,scale=0.7] {$2$};
\end{tikzpicture}}
\endxy
}
\newcommandx{\crossingfoam}[1][1 = 1]{
\xy
(0,0)*{
\begin{tikzpicture} [fill opacity=0.2,  decoration={markings,
                        mark=at position 0.6 with {\arrow{>}};    }, scale=#1]
     	\draw[very thick](2,3)--(-4.75,3);
        \draw[very thick] (2,0)--(-4.75,0);
        \draw[](-4.75,0)--(-4.75,3);
        \draw[ultra thick, red](-2.75,0)--(-2.75,3);
        \draw[](2,0)--(2,3);
        \path[fill = red] (-4.75,3)--(-2.75,3) -- (-2.75,0)--(-4.75,0);
        \path[fill = red] (2,3)--(-2.75,3) -- (-2.75,0)--(2,0);
        \draw[very thick, postaction={decorate}] (-4,0) -- (-3,0);
	\path [fill=blue] (-2,2) -- (-2,-1) -- (-2.75,0)--(-2.75,3) -- cycle;
     	\path[fill=red] (-2,2) -- (-.75,2) arc(180:270:.75) -- (0,-.25) arc(90:180:.75) -- (-2,-1);
     	\draw[very thick](-.75,2)--(-2,2);
	\draw[very thick] (.75,2) -- (-.75,2);
	\draw (-2,2) -- (-2,-1);
      	\draw[ultra thick, red] (-.75,2) arc (180:270:.75);
	\draw[ultra thick, red] (.75,2) arc (360:270:.75);
	\path[fill=red] (.75,2) arc (360:180:.75);
	\draw[very thick] (-.75,-1)--(-2,-1);
	\draw[very thick] (.75,-1) -- (-.75,-1);
     	\draw[ultra thick, red] (-.75,-1) arc (180:90:.75);
	\draw[ultra thick, red] (.75,-1) arc (0:90:.75);
	\path[fill=red] (.75,-1) arc (0:180:.75);
	\path [fill=blue] (1.75,-2) arc (0:60:0.75) -- (0.375,-0.35) arc (60:0:0.75);
      	\path [fill=blue] (0.375,-0.35) arc (60:180:0.75) -- (0.25,-2) arc (180:60:0.75);
	\draw[very thick, postaction={decorate}] (.25,-2) -- (-.75,-1);
     	\draw[very thick, postaction={decorate}] (1.75,-2) --(.75,-1);
        \draw[very thick, postaction={decorate}](-2,-1)  --(-2.75,0);
	\path [fill=blue] (1,-1.25) -- (0,-0.25) -- (0,1.25) -- (1,0.25) -- cycle;
	\draw (0,1.25) -- (0,-.25);
	\draw (1,-1.25) -- (1,0.25);
	\draw[ultra thick, red] (0,1.25) -- (1,.25);
	\draw[ultra thick, red] (1,-1.25) -- (0,-.25);
	\path [fill=blue] (.25,1) arc (180:240:0.75) -- (-0.375,1.35) arc (240:180:0.75);
	\path [fill=blue] (0.625,0.35) arc (240:360:0.75) -- (.75,2) arc (0:-120:0.75);
 	\path[fill=red] (3,1) -- (1.75,1) arc(360:270:.75) -- (1,-1.25) arc(90:0:.75) -- (3,-2);
     	\draw[very thick] (3,1) -- (1.75,1);
	\draw[very thick] (1.75,1) -- (.25,1);
	\draw (3,1) -- (3,-2);
	\draw[ultra thick, red] (1.75,1) arc (360:270:.75);
	\draw[ultra thick, red] (.25,1) arc (180:270:.75);
	\path[fill=red] (.25,1) arc (180:360:.75);
	\draw[very thick] (3,-2) -- (1.75,-2);
     	\draw[very thick] (1.75,-2)--(.25,-2);
     	\draw[ultra thick, red] (1.75,-2) arc (0:90:.75);
	\draw[ultra thick, red] (.25,-2) arc (180:90:.75);
	\path[fill=red] (.25,-2) arc (180:0:.75);
    \draw[very thick, postaction={decorate}] (3,-2) -- (2,-2);
     	\draw[very thick, postaction={decorate}] (.25,1) -- (-.75,2);
     	\draw[very thick, postaction={decorate}] (1.75,1) --(.75,2);
                \draw[very thick, postaction={decorate}] (-2,2)  --(-2.75,3);
    \node[black,  fill opacity= 0.5] at (2.25, 3)[below,scale=0.7] {$\lambda +  2$};
    \node[black,  fill opacity= 0.5] at (3.25,2)[below,scale=0.7] {$2$};
\end{tikzpicture}}
\endxy
}
\newcommandx{\cupEFfoamTUBE}[6][1 = 1, 2 = 2, 3 = 3, 4 = 4, 5 = 5, 6 = 6]{
\xy
(0,0)*{
\begin{tikzpicture} [fill opacity=0.2,  decoration={markings,
                        mark=at position 0.6 with {\arrow{>}};    }, scale=#1]
     	\filldraw [fill=red] (-3,0) rectangle (2,3);
     	\draw[very thick](-3,3)--(2,3);
     	\draw[very thick] (-3,0)--(2,0);
      	\draw[ultra thick, red] (-.75,3) arc (180:360:.75);
	\path [fill=blue] (.25,2) arc (180:240:0.75) -- (-0.375,2.35) arc (240:180:0.75);
	\path [fill=blue] (0.625,1.35) arc (240:360:0.75) -- (.75,3) arc (0:-120:0.75);
     	\filldraw [fill=red] (-2,-1) rectangle (3,2);
     	\draw[very thick] (-2,-1)--(3,-1);
     	\draw[very thick] (-2,2)--(3,2);
     	\draw[very thick] (.25,2) -- (-.75,3) ;
     	\draw[very thick] (.75,3) -- (1.75,2);
       	\draw[ultra thick, red] (.25,2) arc (180:360:.75);
        \checkarg{#2}%
\ifshownode
    \node[black, fill opacity=0.8] at (2.75, -0.75) {$\bullet$};
    \node[black, fill opacity=0.8] at (2.75,-0.75) [above, scale=0.7] {#2}; 
\fi
        \checkarg{#3}%
\ifshownode
    \node[black, fill opacity= 0.8 ] at(-0.80, 0.80) {$\bullet$};
    \node[black,  fill opacity= 0.8] at (-0.875, 0.80)[below,scale=0.7] {#3};
    \fi
        \checkarg{#4}%
\ifshownode
    \node[black, fill opacity= 0.8 ] at (0.625, 1.625) {$\bullet$};
    \node[black,  fill opacity= 0.8] at (0.625, 1.625)[below,scale=0.7] {#4};
    \fi
            \checkarg{#5}%
\ifshownode
    \node[black, fill opacity= 0.8 ] at (0.875, 2.5) {$\bullet$};
    \node[black,  fill opacity= 0.8] at (0.875, 2.5)[below,scale=0.7] {#5};
    \fi
            \checkarg{#6}%
\ifshownode
     \node[black, fill opacity= 0.8 ] at (0.25, 2.875) {$\bullet$};
    \node[black,  fill opacity= 0.8] at (0.25, 2.95)[above,scale=0.7] {#6};
    \fi
    \node[black,  fill opacity= 0.5] at (2.25, 3)[below,scale=0.7] {$\lambda + a$};
    \node[black,  fill opacity= 0.5] at (3.25,2)[below,scale=0.7] {$a$};
    \path [fill=green] (0.75,0) arc (0:60:0.75) -- (-0.625,1.65) arc (60:0:0.75);
      	\path [fill=green] (-0.625,1.65) arc (60:180:0.75) -- (-0.75,0) arc (180:60:0.75);
    \path [fill=green] (-0.75,0) arc (180:240:0.75) -- (-1.375,0.35) arc (240:180:0.75);
	\path [fill=green] (-.375,-.65) arc (240:360:0.75) -- (-0.25,1) arc (0:-120:0.75);
    \draw[ultra thick, red, fill opacity = 0.5] (0.75,0) arc (0:360:.75);
    \draw[ultra thick, red] (-0.25,1) arc (0:360:.75);
\end{tikzpicture}}
\endxy
}
\newcommandx{\cupEFfoam}[6][1 = 1, 2 = 2, 3 = 3, 4 = 4, 5 = 5, 6 = 6]{
\xy
(0,0)*{
\begin{tikzpicture} [fill opacity=0.2,  decoration={markings,
                        mark=at position 0.6 with {\arrow{>}};    }, scale=#1]
     	\filldraw [fill=red] (-2,0) rectangle (2,3);
     	\draw[very thick](-2,3)--(2,3);
     	\draw[very thick] (-2,0)--(2,0);
      	\draw[ultra thick, red] (-.75,3) arc (180:360:.75);
	\path [fill=blue] (.25,2) arc (180:240:0.75) -- (-0.375,2.35) arc (240:180:0.75);
	\path [fill=blue] (0.625,1.35) arc (240:360:0.75) -- (.75,3) arc (0:-120:0.75);
     	\filldraw [fill=red] (-1,-1) rectangle (3,2);
     	\draw[very thick] (-1,-1)--(3,-1);
     	\draw[very thick] (-1,2)--(3,2);
     	\draw[very thick] (.25,2) -- (-.75,3) ;
     	\draw[very thick] (.75,3) -- (1.75,2);
       	\draw[ultra thick, red] (.25,2) arc (180:360:.75);
        \checkarg{#2}%
\ifshownode
    \node[black, fill opacity=0.8] at (2.75, -0.75) {$\bullet$};
    \node[black, fill opacity=0.8] at (2.75,-0.75) [above, scale=0.7] {#2}; 
\fi
        \checkarg{#3}%
\ifshownode
    \node[black, fill opacity= 0.8 ] at (-1.5, 2.5) {$\bullet$};
    \node[black,  fill opacity= 0.8] at (-1.5, 2.5)[below,scale=0.7] {#3};
    \fi
        \checkarg{#4}%
\ifshownode
    \node[black, fill opacity= 0.8 ] at (0.625, 1.625) {$\bullet$};
    \node[black,  fill opacity= 0.8] at (0.625, 1.625)[below,scale=0.7] {#4};
    \fi
            \checkarg{#5}%
\ifshownode
    \node[black, fill opacity= 0.8 ] at (0.875, 2.5) {$\bullet$};
    \node[black,  fill opacity= 0.8] at (0.875, 2.5)[below,scale=0.7] {#5};
    \fi
            \checkarg{#6}%
\ifshownode
     \node[black, fill opacity= 0.8 ] at (0.25, 2.875) {$\bullet$};
    \node[black,  fill opacity= 0.8] at (0.25, 2.95)[above,scale=0.7] {#6};
    \fi
    \node[black,  fill opacity= 0.5] at (2.25, 3)[below,scale=0.7] {$\lambda + a$};
    \node[black,  fill opacity= 0.5] at (3.25,2)[below,scale=0.7] {$a$};
\end{tikzpicture}}
\endxy
}
\newcommandx{\cupEFfoama}[4][1 = 1, 2 = 0, 3 = 0, 4 = 0]{
\xy
(0,0)*{
\begin{tikzpicture} [fill opacity=0.2,  decoration={markings,
                        mark=at position 0.6 with {\arrow{>}};    }, scale=#1]
     	\filldraw [fill=red] (-2,0) rectangle (2,3);
     	\draw[very thick](-2,3)--(2,3);
     	\draw[very thick] (-2,0)--(2,0);
      	\draw[ultra thick, red] (-.75,3) arc (180:360:.75);
        \draw[very thick, postaction={decorate}] (2,0) -- (1,0) ;
	\path [fill=blue] (.25,2) arc (180:240:0.75) -- (-0.375,2.35) arc (240:180:0.75);
	\path [fill=blue] (0.625,1.35) arc (240:360:0.75) -- (.75,3) arc (0:-120:0.75);
     	\path[fill=red] (.25,2) arc (180:360:.75);
        \draw[ultra thick, red] (.25,2) arc (180:360:.75);
        \draw[very thick](.25,2)--(1.75,2);
        \draw[very thick] (1,2) -- (0.25,2) ;
     	\draw[very thick, postaction={decorate}] (.25,2) -- (-.75,3) ;
     	\draw[very thick, postaction={decorate}] (.75,3) -- (1.75,2);
        \checkarg{#2}%
\ifshownode
    \node[black, fill opacity= 0.8 ] at (1, 1.75) {$\bullet$};
    \node[black,  fill opacity= 0.8] at (1, 1.75)[below,scale=0.7] {#2};
    \fi
        \checkarg{#3}%
\ifshownode
    \node[black, fill opacity= 0.8 ] at (-1.5, 2.5) {$\bullet$};
    \node[black,  fill opacity= 0.8] at (-1.5, 2.5)[below,scale=0.7] {#3};
    \fi
            \checkarg{#4}%
\ifshownode
     \node[black, fill opacity= 0.8 ] at (0.25, 2.875) {$\bullet$};
    \node[black,  fill opacity= 0.8] at (0.25, 2.875)[below,scale=0.7] {#4};
    \fi
    \node[black,  fill opacity= 0.5] at (2.25, 3)[below,scale=0.7] {$\lambda$};
\end{tikzpicture}}
\endxy
}
\newcommandx{\cupEFfoamb}[4][1 = 1, 2 = 0, 3 = 0, 4 = 0]{
\xy
(0,0)*{
\begin{tikzpicture} [fill opacity=0.2,  decoration={markings,
                        mark=at position 0.6 with {\arrow{>}};    }, scale=#1]
     	\path[fill=red]   (-2,0) -- (-2, 3) -- (-.75,3) arc (180:360:.75) -- (2,3) -- (2, 0);
     	\draw[very thick](-2,3)--(-.75,3);
        \draw[very thick](0.75,3)--(2,3);
     	\draw[very thick] (-2,0)--(2,0);
      	\draw[ultra thick, red] (-.75,3) arc (180:360:.75);
        \draw[](-2,0) -- (-2, 3);
        \draw[](2,0) -- (2, 3);
        \draw[very thick, postaction={decorate}] (2,0) -- (1,0) ;
	\path [fill=blue] (.25,2) arc (180:240:0.75) -- (-0.375,2.35) arc (240:180:0.75);
	\path [fill=blue] (0.625,1.35) arc (240:360:0.75) -- (.75,3) arc (0:-120:0.75);
     	\filldraw [fill=red] (-1,-1) rectangle (3,2);
     	\draw[very thick] (-1,-1)--(3,-1);
     	\draw[very thick] (-1,2)--(3,2);
        \draw[very thick, postaction={decorate}] (3,-1) -- (2,-1) ;
     	\draw[very thick, postaction={decorate}] (.25,2) -- (-.75,3) ;
     	\draw[very thick, postaction={decorate}] (.75,3) -- (1.75,2);
       	\draw[ultra thick, red] (.25,2) arc (180:360:.75);
    \checkarg{#2}%
\ifshownode
    \node[black, fill opacity=0.8] at (2.75, -0.75) {$\bullet$};
    \node[black, fill opacity=0.8] at (2.75,-0.75) [above, scale=0.7] {#2}; 
\fi
        \checkarg{#3}%
\ifshownode
    \node[black, fill opacity= 0.8 ] at (-1.5, 2.5) {$\bullet$};
    \node[black,  fill opacity= 0.8] at (-1.5, 2.5)[below,scale=0.7] {#3};
    \fi
        \checkarg{#4}%
\ifshownode
    \node[black, fill opacity= 0.8 ] at (1, 1.75) {$\bullet$};
    \node[black,  fill opacity= 0.8] at (1, 1.75)[below,scale=0.7] {#4};
    \fi
    \node[black,  fill opacity= 0.5] at (2.25, 3)[below,scale=0.7] {$1$};
    \node[black,  fill opacity= 0.5] at (3.25,2)[below,scale=0.7] {$-\lambda +  1$};
\end{tikzpicture}}
\endxy
}
\newcommandx{\tripleuparrow}[3][1 = 0, 2 = 0, 3 = 0]{
\xy
(0,0)*{
\begin{tikzpicture}
    \draw[thick, ->] (0,0) to (0,1.6);
    \draw[thick, ->] (.8,0) to (.8,1.6);
    \draw[thick, ->] (1.6,0) to (1.6,1.6);
    \node at (-.2,.1) {\tiny $i$};
    \node at (.6,.1) {\tiny $j$};
    \node at (1.4,.1) {\tiny $i$};
    \checkarg{#1}\ifshownode
    \node at (0,.8){$\bullet$};
    \node at (0,.8)[left] {\tiny #1};
    \fi
    \checkarg{#2}\ifshownode
    \node at (0.8,.8){$\bullet$};
    \node at (0.8,.8)[left] {\tiny #2};
    \fi
    \checkarg{#3}\ifshownode
    \node at (1.6,.8){$\bullet$};
    \node at (1.6,.8)[left] {\tiny #3};
    \fi
\end{tikzpicture}}
\endxy}
\begin{document}

\title{Action of the Witt algebra on categorified quantum groups}
 
\author[J. Grlj]{Jernej Grlj}
\address{Department of Mathematics\\
 University of Southern California \\
  Los Angeles, California 90089, USA}
  \email{grlj@usc.edu}
\author[A.D. Lauda]{Aaron D. Lauda}
\address{Department of Mathematics\\
 University of Southern California \\
  Los Angeles, California 90089, USA}
  \email{lauda@usc.edu}
\date{\today}

\maketitle

\begin{abstract}
We construct an action of the positive Witt algebra on the categorified quantum group associated to a simply-laced Lie algebra.  In the type A case, we show that this action induces an action of the positive Witt algebra on $\mathfrak{gl}_n$-foams, recovering the action of Qi, Robert, Sussan, and Wagner.    We also show that this construction is compatible with the trace decategorification, inducing the action of the positive Witt algebra on the current algebra.   
\end{abstract}

\section{Introduction}

Triply graded link homology theories, such as the HOMFLY-PT homology~\cite{Khovanov_2008}, often exhibit additional algebraic structure not evident in their original definitions.   One example is the action of the positive half of the Witt algebra $\witt$ by grading-shifting derivations, as constructed by Khovanov and Rozansky~\cite{khovanov2013positivehalfwittalgebra}.  This Lie algebra is spanned by vector fields $L_m = x^{m+1}\frac{d}{dx}$ acting by derivations of the polynomial algebra $\Q[x]$.   
Whenever link homology exhibits additional algebraic structure, such as differentials, filtrations, or Lie algebra actions, it is reasonable to expect that these reflect deeper features of the underlying categorified representation theory that govern link homology. Our results confirm this expectation for the case of the Witt algebra and clarify how this symmetry is embedded in the 2-categorical structure of the categorified quantum group $\dot{\mathcal{U}}_Q(\mathfrak{g})$ introduced in~\cite{Lau1,KL3,Rou2}.
 
The observation by Khovanov and Rozansky that the positive Witt algebra acts on triply graded link homology was foreshadowed by work of Gorsky, Oblomkov, and Rasmussen~\cite{Gorsky2013}, who showed that the stable Khovanov homology of torus knots admits an action of the Virasoro algebra. The Witt algebra arises as the semiclassical limit of the Virasoro algebra, and its appearance in this context reflects an underlying structure of derivations acting on link homology. Such structures also emerge naturally in the geometric model for HOMFLY-PT homology via coherent sheaves on Hilbert schemes of points in the plane~\cite{Oblomkov_2018}. The equivariant $K$-theory of these Hilbert schemes carries an action of the elliptic Hall algebra~\cite{Schiffmann_2013}, which contains Heisenberg and $\mathcal{W}$-type subalgebras. Together, these developments suggest that derivation operators in link homology reflect deeper algebraic and geometric symmetries and motivate the search for representation-theoretic explanations of such actions.

More recently, Qi, Robert, Sussan, and Wagner~\cite{qi2023symmetriesequivariant} defined an $\mathfrak{sl}_2$ action by derivations on equivariant $\mathfrak{gl}_n$ homology.  This work was then combined with  Khovanov and Rozansky~\cite{khovanov2013positivehalfwittalgebra} positive Witt algebra action by Guérin and Roz, who constructed an action of $\ourwitt$ on the $\mathfrak{gl}_n$ Khovanov-Rozansky homology~\cite{guérin2025actionwittalgebrakhovanovrozansky},   extending observations from~\cite{roz2023mathfraksl2actionlinkhomology}.   This action is slightly larger than the action observed by Khovanov and Rozansky as it includes operators $L_m$ for $m=-1$ that are part of the $\mathfrak{sl}_2$ action from~\cite{qi2023symmetriesequivariant}.  There is an alternative action on Khovanov-Rozansky homology presented in \cite{Gorsky_2014}, which also specializes to an action of $\mathfrak{sl}_2$. These homologies are often realized combinatorially using the 2-category of foams (see for example \cite{Mackaay_2009,QR,Robert2020-xp}). Thus, understanding symmetries on link homology is closely tied to understanding actions on foams.

The link between these topological invariants and the representation theory of $\mathcal{U}_{Q}(\mathfrak{g})$ is provided by categorical Skew Howe duality \cite{CKM,CautisKam-coherent,Lauda2015,QR}, which relates foams to categorified quantum groups. In this article, we construct an action of the positive Witt algebra $\ourwitt$ on the categorified quantum group $\dot{\mathcal{U}}_Q(\mathfrak{g})$ associated to a simply-laced Lie algebra. In type $A$, our construction shows that the derivation operators observed in link homology arise naturally from higher morphisms in the categorified representation theory of $\mathfrak{gl}_n$, providing a representation-theoretic explanation for the appearance of Witt algebra symmetries via skew Howe duality. While skew Howe duality specifically motivates the Witt action in type $A$, our construction extends this action uniformly to all simply-laced types $\mathfrak{g}$, suggesting that the underlying derivation structure is inherent to categorified quantum groups. Some actions of $\mathfrak{sl}_2$ on KLR algebras were considered in earlier work by Khovanov~\cite{KhovNote}.

The Witt action on $\Ucat$ is equivariant with respect to the action on foams and categorical Skew Howe duality. We show that this action recovers the $\mathfrak{sl}_2$ action on $\Ucat(\mathfrak{sl}_2)$ of~\cite{Elias_2023}, answering a question of~\cite{qi2024symmetriesmathfrakglnfoams}. Moreover, we are able to show that our action is equivariant with respect to the action on current algebras and trace decategorification of Beliakova, Habiro, Webster and one of the authors in~\cite{Lau-trace3}, see also~\cite{Beliakova_2016,Lau-trace2}.

\subsection*{Acknowledgments}
We would like to thank M. Khovanov for suggesting this problem, J. Sussan for suggesting this problem and providing comments on a preliminary version of this paper and F. Roz for providing comments on a preliminary version of this paper. We would like to thank the anonymous referee for providing a short proof of Lemma~\ref{actiononfunctions}. A.D.L. and J.G. are partially supported by NSF grant DMS-2200419 and the Simons Foundation collaboration grant on New Structures in Low-Dimensional Topology.

\section{Preliminaries}

Fix a base field $\Bbbk$. We will always work over this field, which is not assumed to be of characteristic 0, nor algebraically closed. We assume that $2$ is invertible. 

\subsection{Symmetric functions} \label{subsec_sympoly}

Let us denote by $S_k$ the symmetric group and $\Sym_k=\Z[x_1, \dots,x_k]^{S_k}$
the ring of symmetric polynomials.

Let $\Sym$ be the ring of symmetric functions, defined as a subring of
the inverse limit of the system $(\Sym_k)_{k\ge0}$ (see
e.g.~\cite{McD}).  The elementary symmetric functions
$$
e_j:=\sum_{1\leq i_1<i_2<\dots<i_j} x_{i_1}\dots x_{i_j}\,,
$$
or the complete symmetric functions
$$
h_j:=\sum_{1\leq i_1\leq i_2\leq\dots\leq i_j} x_{i_1}\dots x_{i_j}
$$
generate $\Sym$ as a free commutative ring, i.e.,
$\Sym=\Z[\elem_1,\elem_2,\ldots]=\Z[h_1,h_2,\ldots]$.
The power sum symmetric functions are defined by
$$
p_t:=\sum_i x_i^t\,.
$$
We have the Newton identities:
\begin{gather} \label{eq:Newton}
  j\elem_j=\sum_{i=1}^j(-1)^{i-1}\elem_{j-i}p_i,  \qquad 
  jh_j=\sum_{i=1}^jh_{j-i}p_i\, .
\end{gather}

\subsection{Witt algebra}
We will define a Witt sequence and the positive half of a Witt algebra following~\cite[Section 3]{qi2024symmetriesmathfrakglnfoams}. 

\begin{definition}
The Lie algebra $\witt$ is generated by symbols $(\LLn)_{n \in \Z}$
subject to the relations
\begin{align}
  \label{eq:witt-relations}
 [\LLn, \LLn[m]] 
  &= (n-m) \LLn[m+n]
\end{align}
for all $n, m\in \Z$.

The Lie algebra $\mathfrak{sl}_2$ is a three dimensional Lie algebra with generators $e, h, f$ with the following relations:
\begin{align}
    [h, e] = 2e, \quad [h, f] = -2f, \quad [e, f] = h
\end{align}
\end{definition}
We will be interested in the Lie subalgebra $\ourwitt$ generated by symbols $(\LLn)_{n\in \Z_{\geq -1}}$.

\begin{lemma}[Lemma 3.6~\cite{qi2024symmetriesmathfrakglnfoams}] \label{lem:wittsl2}
There is an injective map of Lie algebras $\iota \maps \mathfrak{sl}_2 \to \witt$ given by:
  \label{lem:sl2-to-witt}
  \begin{align*}
    \iota: \quad
    e &\mapsto \LLn[-1] \\
    h &\mapsto 2\LLn[0] \\
    f &\mapsto -\LLn[1]
  \end{align*}
 whose
  image is in $\ourwitt$.  This is a maximal finite-dimensional subalgebra of $\ourwitt$.  
\end{lemma}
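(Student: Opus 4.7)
The plan is to verify the lemma in three stages: (i) that $\iota$ is a Lie algebra homomorphism, (ii) that it is injective with image in $\ourwitt$, and (iii) that this image is maximal among finite-dimensional Lie subalgebras of $\ourwitt$.

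For (i) I will directly check the three defining brackets of $\mathfrak{sl}_2$ against their images using the single identity $[\LLn, \LLn[m]] = (n-m)\LLn[m+n]$. A representative computation is $[\iota(e),\iota(f)] = -[\LLn[-1],\LLn[1]] = -(-2)\LLn[0] = 2\LLn[0] = \iota(h)$, and the two bracket relations involving $h$ reduce to similar one-line applications of the Witt relation. Injectivity is automatic: $\iota$ sends a basis of $\mathfrak{sl}_2$ to nonzero scalar multiples of the linearly independent elements $\LLn[-1], \LLn[0], \LLn[1]$ of $\witt$. Containment in $\ourwitt$ is immediate because the indices $-1, 0, 1$ all lie in $\Z_{\geq -1}$.

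The substantive step is maximality, which I will prove by contradiction. Suppose $\mathfrak{h} \subseteq \ourwitt$ is a finite-dimensional Lie subalgebra strictly containing $\iota(\mathfrak{sl}_2)$, and pick $x \in \mathfrak{h} \setminus \iota(\mathfrak{sl}_2)$. Expanding $x = \sum_{n \geq -1} a_n \LLn$ in the standard basis with finite support, the failure of $x$ to lie in $\mathrm{span}(\LLn[-1],\LLn[0],\LLn[1])$ forces the set $\{n \geq 2 : a_n \neq 0\}$ to be nonempty; let $N$ be its maximum. I then iterate the bracket with $\LLn[1]$: set $x_0 = x$ and $x_{k+1} = [\LLn[1], x_k]$, so each $x_k$ lies in $\mathfrak{h}$ because $\LLn[1] = -\iota(f) \in \iota(\mathfrak{sl}_2) \subseteq \mathfrak{h}$. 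Using $[\LLn[1], \LLn] = (1-n)\LLn[n+1]$, induction shows that the top index of $x_k$ is $N+k$ with coefficient $a_N \prod_{j=0}^{k-1}\bigl(1-(N+j)\bigr)$; every factor is a nonzero integer because $N \geq 2$, so this coefficient is nonzero for all $k$. Thus the $x_k$ have pairwise distinct top indices and are linearly independent in $\mathfrak{h}$, contradicting $\dim \mathfrak{h} < \infty$.

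The hard part is precisely this maximality argument, and the key is the bound $N \geq 2$: this is what guarantees that bracketing with $\LLn[1]$ never encounters the zero factor $1-1 = 0$ and therefore never stalls. Intuitively, $\iota(\mathfrak{sl}_2) = \mathrm{span}(\LLn[-1], \LLn[0], \LLn[1])$ is the largest finite slice of $\ourwitt$ closed under $\mathrm{ad}\,\LLn[1]$, because any nontrivial component beyond $\LLn[1]$ immediately launches an infinite bracket cascade.
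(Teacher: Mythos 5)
The paper itself offers no proof of this lemma --- it is imported as Lemma 3.6 of \cite{qi2024symmetriesmathfrakglnfoams} --- so your argument has to stand on its own. Stages (i) and (ii) are correct and complete: the three bracket checks reduce to the single relation $[\LLn[n],\LLn[m]]=(n-m)\LLn[n+m]$ exactly as you indicate (your representative computation $[\iota(e),\iota(f)]=-[\LLn[-1],\LLn[1]]=2\LLn[0]=\iota(h)$ is right, as are the two $h$-brackets), and injectivity together with containment in $\ourwitt$ is immediate from linear independence of the $\LLn[n]$.

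The maximality cascade is the right idea, but as written it has a characteristic-dependent gap. The paper fixes a base field $\Bbbk$ that is explicitly \emph{not} assumed to have characteristic $0$ (only $2$ is required to be invertible), and $\witt$ is a Lie algebra over $\Bbbk$. Your top coefficient is $a_N\prod_{j=0}^{k-1}\bigl(1-(N+j)\bigr)$, and you conclude it is nonzero because each factor is a nonzero integer; that inference fails once these integers are read in $\Bbbk$. In characteristic $p\geq 3$ the factor $1-(N+j)$ vanishes as soon as $N+j\equiv 1\pmod p$, which happens within the first $p$ iterations, and the problem is not merely that the top term drops: $\mathrm{ad}(\LLn[1])$ kills the \emph{entire} $\mathrm{ad}(\LLn[0])$-eigencomponent supported on indices congruent to $1$ modulo $p$, so the cascade genuinely stalls. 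To make the maximality claim go through in positive characteristic one needs an extra device --- for instance, decompose $\mathfrak{h}$ into $\mathrm{ad}(\LLn[0])$-eigenspaces and escalate using brackets of $x$ with its own iterates, whose leading coefficients are differences of indices such as $N-M$ rather than $1-n$ --- or else one should state the maximality clause only in characteristic $0$. Everything up to that point, and the intuition that any component above $\LLn[1]$ launches an infinite bracket cascade, is sound in characteristic $0$.
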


Lastly, we will need the definition of a Witt sequence.
\begin{definition}
  A \emph{Witt-sequence} $(\lambda_n)_{n \in \Z_{\geq -1}} \in \Bbbk^{\infty}$ is a sequence such that $\lambda_{-1}=0$ and for any  $m,n \in \Z_{\geq 0}$,
  \begin{equation*}
n\lambda_{n} - m\lambda_{m}  = (n-m)\lambda_{m+n}.
\end{equation*}
\end{definition}

\begin{lemma}\label{actiononpoly}
The Lie algebra $\ourwitt$ acts on the ring of polynomials in infinitely many variables $\Bbbk[z_1, z_2, \dots]$ where the action is given as follows. Let $p \in \Bbbk[z_1, z_2, \dots]$, then
\begin{align*}
  \LLn \cdot p = -\sum_{i}
  z_i^{n+1}\frac{\partial p}{\partial z_i}.
\end{align*}
In particular, we have an action of $\ourwitt$ on the ring of symmetric functions $\Sym$.
\end{lemma}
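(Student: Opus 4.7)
The plan is to verify the lemma in two stages: first, confirm that each $\LLn$ is a well-defined derivation of $\Bbbk[z_1, z_2, \dots]$ and that the Witt bracket relation holds, and then deduce the restriction to $\Sym$ from the symmetry of the defining formula.

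For the first stage, each operator $\LLn = -\sum_i z_i^{n+1}\partial_{z_i}$ with $n \geq -1$ is a well-defined $\Bbbk$-linear endomorphism of $\Bbbk[z_1, z_2, \dots]$ because every polynomial involves only finitely many variables, so the sum is effectively finite. As a $\Bbbk$-linear combination of partial derivatives, $\LLn$ obeys the Leibniz rule. Since the commutator of two derivations is again a derivation, the identity $[\LLn, \LLn[m]] = (n-m)\LLn[n+m]$ need only be verified on the polynomial generators $z_j$. A direct computation gives
\[
\LLn[m](z_j) = -z_j^{m+1}, \qquad \LLn(z_j^{m+1}) = (m+1)z_j^{m}\cdot \LLn(z_j) = -(m+1)z_j^{n+m+1},
\]
so that $\LLn\LLn[m](z_j) = (m+1)z_j^{n+m+1}$. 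Antisymmetrizing in $(n,m)$ yields
\[
[\LLn,\LLn[m]](z_j) = (m-n)z_j^{n+m+1} = (n-m)\LLn[n+m](z_j),
\]
as required. Closure in $\ourwitt$ is not an issue: for $n,m \geq -1$ one has $n+m \geq -2$, with the only potentially problematic case $n=m=-1$ giving $[\LLn[-1],\LLn[-1]] = 0$ in agreement with the prefactor $n-m = 0$.

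For the restriction to $\Sym$, observe that the formula for $\LLn$ is manifestly invariant under permutations of the variables $z_i$. Hence $\LLn$ commutes with the $S_k$-action on $\Bbbk[z_1,\dots,z_k]$ and preserves each subring $\Sym_k$ of symmetric polynomials, inducing an action on $\Sym$ via the inverse limit. Equivalently, one computes on the power-sum generators $\LLn(p_k) = -k\,p_{n+k}$ and extends by the Leibniz rule to $\Sym = \Bbbk[p_1, p_2,\dots]$. The entire argument is essentially a multivariable adaptation of the classical Witt-algebra realization on $\Bbbk[z,z^{-1}]$ by vector fields $-z^{n+1}\partial_z$; since every step reduces to a one-line derivation-on-generators computation, I anticipate no real obstacle. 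The only minor subtlety worth flagging is the compatibility of the finite-variable action with the inverse-limit projection, which is automatic for $n \geq 0$ (since $z_{k+1}^{n+1}$ vanishes upon setting $z_{k+1}=0$) and for $n=-1$ is most cleanly handled through the power-sum description just given, with the convention $p_0 = 0$.
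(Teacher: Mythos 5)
Your proof is correct, and in fact the paper offers no proof of this lemma at all --- it is stated as the standard realization of the Witt algebra by the vector fields $-z_i^{n+1}\partial_{z_i}$ --- so there is no argument to compare against. Your verification is the expected one: each $\LLn$ is a derivation, so the bracket relation $[\LLn,\LLn[m]]=(n-m)\LLn[n+m]$ need only be checked on the generators $z_j$, and your computation there is right, as is the observation that closure in $\ourwitt$ only requires handling $n=m=-1$, where the bracket vanishes. The one point where you are more careful than the paper is the compatibility of $\LLn[-1]$ with the passage to $\Sym$: as you note, $-\sum_i\partial_{z_i}$ does not commute with the truncation maps $\Sym_{k+1}\to\Sym_k$ (e.g.\ $\LLn[-1](e_1)=-k$ in $k$ variables), so one must fix a value of $p_0$ to define the action on $\Sym$; your convention $p_0=0$ works, and the paper itself only confronts this later, in the proof of the lemma on powersum bubbles, where the $\LLn[-1]$ case is ``treated separately'' and $p_{i,0}(\lambda)$ is assigned the value $\lambda_i$. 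The only caveat to your closing alternative argument is that describing the action via the power-sum generators presumes $\Q\subseteq\Bbbk$, which the paper does not assume; your primary argument via permutation-invariance of the formula does not need this.
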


\begin{lemma}\label{actiononfunctions}
    Let $h_m$, $e_m$ and $p_m$ be the symmetric functions of degree $m$ as defined in \ref{subsec_sympoly}. Then \begin{align*}
        \LLn[n](h_m) &=  -(n+m)h_{n+m} +\sum_{j = 0}^{n-1} p_{n - j} h_{m+j} \\
        \LLn[n](e_m) &=  (-1)^{n+1} (n+m)e_{n+m} + \sum_{j = 0}^{n-1} (-1)^{j+1} p_{n - j} e_{m+j}\\
        \LLn[n](p_m) &= - m p_{n + m}  
    \end{align*}
\end{lemma}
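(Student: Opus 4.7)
The plan is to use generating-function identities for the elementary, complete, and power-sum symmetric functions. Introduce
\[
H(t) = \sum_{m\ge 0} h_m t^m = \prod_i \frac{1}{1-z_i t}, \qquad
E(t) = \sum_{m\ge 0} e_m t^m = \prod_i (1+z_i t).
\]
Taking logarithmic derivatives yields the standard identities
\[
\frac{t H'(t)}{H(t)} = \sum_{k\ge 1} p_k\, t^k, \qquad
\frac{t E'(t)}{E(t)} = \sum_{k\ge 1} (-1)^{k-1} p_k\, t^k,
\]
which will be the main lever for converting derivative-of-generating-function data into explicit $p$-multiplication.

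I would first dispatch $p_m$ by direct computation from Lemma~\ref{actiononpoly}: since $\partial z_j^m/\partial z_i = \delta_{ij}\, m\, z_j^{m-1}$, we get $\LLn[n](p_m) = -m\sum_i z_i^{n+m} = -m\, p_{n+m}$.

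For the $h_m$ identity, I would apply $\LLn[n]$ termwise to $H(t)$. Using $\partial H(t)/\partial z_j = t(1-z_j t)^{-1} H(t)$ and expanding $\frac{z_j^{n+1}}{1-z_j t} = \sum_{k\ge 0} z_j^{n+1+k} t^k$, one finds
\[
\LLn[n] H(t) = -t\, H(t) \sum_{k \ge 0} p_{n+1+k}\, t^k .
\]
The next step is the key manipulation: rewrite $\sum_{k\ge 0} p_{n+1+k} t^k = t^{-n-1}\bigl( \sum_{k\ge 1} p_k t^k - \sum_{k=1}^{n} p_k t^k\bigr)$ and substitute the logarithmic-derivative identity to obtain
\[
\LLn[n] H(t) = -t^{1-n} H'(t) + t^{-n} H(t) \sum_{k=1}^{n} p_k\, t^k .
\]
Extracting the coefficient of $t^m$ gives $-(n+m)h_{n+m} + \sum_{k=1}^n p_k\, h_{m+n-k}$, which becomes the claimed expression after the reindexing $j = n-k$.

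The $e_m$ identity follows by the identical scheme applied to $E(t)$: here $\partial E(t)/\partial z_j = t(1+z_j t)^{-1} E(t)$ produces an alternating-sign expansion $\sum_{k\ge 0} (-1)^k p_{n+1+k} t^k$, and substituting the alternating log-derivative identity for $E(t)$ yields $\LLn[n] E(t) = (-1)^{n+1} t^{1-n} E'(t) + (-1)^n t^{-n} E(t) \sum_{k=1}^n (-1)^{k-1} p_k t^k$; extracting the coefficient of $t^m$ and simplifying the resulting signs gives the stated formula. The only real obstacle is careful bookkeeping of signs and the shift of summation indices in the $e_m$ case; once the logarithmic-derivative identities are in hand, the rest is formal manipulation of power series.
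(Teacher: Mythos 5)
Your proposal is correct and takes essentially the same approach as the paper: both apply $\LLn[n]$ to the generating function $H(t)=\prod_i(1-x_it)^{-1}$ and extract the coefficient of $t^m$, arriving at the same intermediate identity $\LLn[n]H(t)=-t^{1-n}H'(t)+\sum_{j=0}^{n-1}t^{-j}p_{n-j}H(t)$. The only difference is cosmetic — you package the telescoping via the logarithmic-derivative identity $tH'(t)/H(t)=\sum_k p_k t^k$, whereas the paper performs the same cancellation by an iterative numerator manipulation — and you additionally write out the $e_m$ case that the paper leaves as "similar."
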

\begin{proof}
    We will only prove the first equality; the second equality follows similarly, and the last equality is evident.
    For the first equality, note that by \eqref{actiononfunctions} it suffices to show that $\LLn(h_m) = - \sum_{j = 1}^m p_{j+n} h_{m-j}$. We use the  generating function for complete symmetric functions $\sum_i h_i t^i = H(t) = \prod_{i} \frac{1}{1-x_i t}$. It follows readily that $\LLn[n]( \frac{1}{1-x_i t}) = - \frac{x_i^{n+1}t}{(1 - x_i t)^2}$, therefore
    \begin{align*}
        \LLn[n](H(t)) &= \sum_{i \geq 1} \LLn[n](h_i)t^i \\
        &= -\sum_{i \geq 1}\left(\frac{x_i^{n+1}t}{(1 - x_i t)^2} \cdot \prod_{i \neq j} (1 - x_jt)^{-1} \right) \\
        &= -\sum_{i \geq 1} \left(\frac{x_i^{n+1}t}{(1 - x_i t)} \cdot H(t) \right) \\
        &= -\sum_{i \geq 1} \sum_{k \geq 0} \left(x_i^{n+1+k}t^{k+1} \cdot H(t) \right) \\
        &= - \sum_{k \geq 1} \left(p_{k+n}t^{k} \cdot H(t) \right) \\
    \end{align*}
This completes the proof after we expand $H(t)$.
\end{proof}

%
\subsection{The 2-category $\Ucat(\mathfrak{g})$}  
%

For this article, we restrict our attention to simply-laced Kac-Moody algebras. These algebras are associated to a symmetric Cartan data consisting of
\begin{itemize}
\item a free $\Z$-module $X$ (the weight lattice),
\item for $i \in I$ ($I$ is an indexing set) there are elements $\alpha_i \in X$ (simple roots) and $\Lambda_i \in X$ (fundamental weights),
\item for $i \in I$ an element $h_i \in X^\vee = \Hom_{\Z}(X,\Z)$ (simple coroots),
\item a bilinear form $(\cdot,\cdot )$ on $X$.
\end{itemize}
Write $\langle \cdot, \cdot \rangle \maps X^{\vee} \times X
\to \Z$ for the canonical pairing. This data should satisfy:
\begin{itemize}
\item $(\alpha_i, \alpha_i) = 2$ for any $i\in I$,
\item $(\alpha_i,\alpha_j) \in \{ 0, -1\}$  for $i,j\in I$ with $i \neq j$,
\item $\la i,\lambda\ra :=\langle h_i, \lambda \rangle =  (\alpha_i,\lambda)$
  for $i \in I$ and $\lambda \in X$,
\item $\langle h_j, \Lambda_i \rangle =\delta_{ij}$ for all $i,j \in I$.
\end{itemize}
Hence $(a_{ij})_{i,j\in I}$ is a symmetrizable generalized Cartan matrix, where $a_{ij}=\langle
h_i, \alpha_j \rangle=(\alpha_i, \alpha_j)$.  We will sometimes denote the bilinear pairing $(\alpha_i,\alpha_j)$ by $i \cdot j$ and abbreviate $\la i,\lambda\ra$ to $\lambda_i$.
We denote by $X^+ \subset X$ the dominant weights which are of the form $\sum_i \lambda_i \Lambda_i$ where $\lambda_i \ge 0$.

\begin{definition}
Associated to a symmetric Cartan datum, define a {\em choice of scalars $Q$} consisting of:
\begin{itemize}
  \item $\left\{ t_{ij}  \mid \text{ for all $i,j \in I$} \right\}$,
\end{itemize}
such that
\begin{itemize}
\item $t_{ii}=0$ for all $i \in I$ and $t_{ij} \in \Bbbk^{\times}$ for $i\neq j$,
 \item $t_{ij}=t_{ji}$ when $a_{ij}=0$.
\end{itemize}
\end{definition}

The choice of scalars $Q$ controls the form of the KLR algebra $R_Q$ that governs the upward-oriented strands.  The $2$-category
$\Ucat(\mf{g})$ is controlled by the products $v_{ij}=t_{ij}^{-1}t_{ji}$ taken
over all pairs $i,j\in I$.  When the underlying graph of the simply-laced Kac-Moody algebra $\mf{g}$ is a tree, in particular a Dynkin diagram, all choices of $Q$ lead to isomorphic KLR-algebras and these isomorphisms extend to isomorphisms of categorified quantum groups $\mathcal{U}_Q(\mf{g}) \to \mathcal{U}_{Q'}(\mf{g})$ for scalars $Q$ and $Q'$~\cite{Lau-param}.

Let $\mathcal{U}_Q(\mf{g})$ denote the non-cyclic form of the categorified quantum group from~\cite{Cautis_2014}.  Though a cyclic form of the categorified quantum group has been defined~\cite{BHLW2}, for our purposes, the non-cyclic variant is the most natural version.  By~\cite[Theorem 2.1]{BHLW2}, the cyclic and non-cyclic variants are isomorphic as $2$-categories.

\begin{remark} \label{weylactionstandard}
Given an orientation of the Dynkin diagram for $\mathfrak{g}$, there is a natural choice of scalars for $\mathcal{U}_Q(\mf{g})$ where
\[
t_{ij} =
\left\{
  \begin{array}{ll}
     -1 & \hbox{ if $i \cdot j =-1$ and $i \longrightarrow j$} \\
      1 & \hbox{ otherwise. }
  \end{array}
\right.
\]
This choice of scalars $Q$ corresponds to the natural choice of KLR algebra $R_Q$ that describes ${\rm Ext}$-algebras between perverse sheaves on the quiver variety~\cite{VV}.  In this case we have $v_{ij}=t_{ij}^{-1} t_{ji} = -1$ whenever $(\alpha_i, \alpha_j)=-1$. 
\end{remark}

 \begin{definition} \label{def:2cat}
Given a choice of scalars $Q$ as above,  the 2-category $\Ucat(\mathfrak{g} )$ is the graded additive 2-category with
\begin{itemize}
\item \label{objects2cat} \textbf{Objects} $\lambda  \in X$,
\item \textbf{1-morphisms} are formal direct sums of (shifts of) compositions of
\[
\onel, \quad \1_{\lambda+\alpha_i} \mathcal{E}_i= \1_{\lambda+\alpha_i} \mathcal{E}_i\onel, \quad \text{ and }\quad
\1_{\lambda-\alpha_i} \mathcal{F}_i= \1_{\lambda-\alpha_i} \mathcal{F}_i\onel
\]
for $1 \leq i \leq n-1$ and $\lambda \in \Z^{n}$.  We denote the grading shift by $\la 1 \ra$, so that for each 1-morphism $x$ in $\mathcal{U}$ and $t\in \Z$ we have a 1-morphism $x\la t\ra$.

\item \textbf{2-morphisms} are $\Bbbk$-vector spaces spanned by compositions of coloured, decorated tangle-like diagrams illustrated below.
\begin{align}
\hackcenter{\begin{tikzpicture}[scale=0.8]
    \draw[thick, ->] (0,0) -- (0,1.5)
        node[pos=.5, shape=coordinate](DOT){};
    \filldraw  (DOT) circle (2.5pt);
    \node at (-.85,.85) {\tiny $\lambda +\alpha_i$};
    \node at (.5,.85) {\tiny $\lambda$};
    \node at (-.2,.1) {\tiny $i$};
\end{tikzpicture}} &\maps \mathcal{E}_i\onel \to \mathcal{E}_i\onel \la i\cdot i \ra  & \quad
 &
  \hackcenter{\begin{tikzpicture}[scale=0.8]
    \draw[thick, ->] (0,0) .. controls (0,.5) and (.75,.5) .. (.75,1.0);
    \draw[thick, ->] (.75,0) .. controls (.75,.5) and (0,.5) .. (0,1.0);
    \node at (1.1,.55) {\tiny $\lambda$};
    \node at (-.2,.1) {\tiny $i$};
    \node at (.95,.1) {\tiny $j$};
\end{tikzpicture}} \;\;\maps \mathcal{E}_i\mathcal{E}_j\onel  \to \mathcal{E}_j\mathcal{E}_i\onel\la -i\cdot j \ra
  \smallskip\\
\hackcenter{\begin{tikzpicture}[scale=0.8]
    \draw[thick, <-] (.75,2) .. controls ++(0,-.75) and ++(0,-.75) .. (0,2);
    \node at (.4,1.2) {\tiny $\lambda$};
    \node at (-.2,1.9) {\tiny $i$};
\end{tikzpicture}} \;\; &\maps \onel  \to \mathcal{F}_i\mathcal{E}_i\onel\la  1 + \lambda_i  \ra   &
    &
\hackcenter{\begin{tikzpicture}[scale=0.8]
    \draw[thick, ->] (.75,2) .. controls ++(0,-.75) and ++(0,-.75) .. (0,2);
    \node at (.4,1.2) {\tiny $\lambda$};
    \node at (.95,1.9) {\tiny $i$};
\end{tikzpicture}} \;\; \maps \onel  \to\mathcal{E}_i\mathcal{F}_i\onel\la  1 -  \lambda_i  \ra   \smallskip \\
\hackcenter{\begin{tikzpicture}[scale=0.8]
    \draw[thick, ->] (.75,-2) .. controls ++(0,.75) and ++(0,.75) .. (0,-2);
    \node at (.4,-1.2) {\tiny $\lambda$};
    \node at (.95,-1.9) {\tiny $i$};
\end{tikzpicture}} \;\; & \maps \mathcal{F}_i\mathcal{E}_i\onel \to\onel\la  1 +  \lambda_i  \ra   &
    &
\hackcenter{\begin{tikzpicture}[scale=0.8]
    \draw[thick, <-] (.75,-2) .. controls ++(0,.75) and ++(0,.75) .. (0,-2);
    \node at (.4,-1.2) {\tiny $\lambda$};
    \node at (-.2,-1.9) {\tiny $i$};
\end{tikzpicture}} \;\;\maps\mathcal{E}_i\mathcal{F}_i\onel  \to\onel\langle  1 -  \lambda_i  \rangle  
\end{align}

In this $2$-category (and those throughout the paper), we read diagrams from right to left and bottom to top.  The identity 2-morphism of the 1-morphism
$\mathcal{E}_i \onel$ is represented by an upward oriented line labelled by $i$ and the identity 2-morphism of $\mathcal{F}_i \onel$ is represented by a downward such line.
\end{itemize}

From the grading shifts above in the definitions of the generating 2-morphisms, it is clear that the generators have degree given as follows: 
\begin{align}
\deg \left( \hackcenter{ \begin{tikzpicture}[scale=0.8]
    \draw[thick, ->] (0,0) -- (0,1.5)
        node[pos=.5, shape=coordinate](DOT){};
    \filldraw  (DOT) circle (2.5pt);
    \node at (-.85,.85) {\tiny $\lambda +\alpha_i$};
    \node at (.5,.85) {\tiny $\lambda$};
    \node at (-.2,.1) {\tiny $i$};
\end{tikzpicture}} \right) = 2 & & 
 &
  \deg \left( \hackcenter{\begin{tikzpicture}[scale=0.8]
    \draw[thick, ->] (0,0) .. controls (0,.5) and (.75,.5) .. (.75,1.0);
    \draw[thick, ->] (.75,0) .. controls (.75,.5) and (0,.5) .. (0,1.0);
    \node at (1.1,.55) {\tiny $\lambda$};
    \node at (-.2,.1) {\tiny $i$};
    \node at (.95,.1) {\tiny $j$};
\end{tikzpicture}} \right) =  - i\cdot j 
  \smallskip\\
\deg \left( \hackcenter{\begin{tikzpicture}[scale=0.8]
    \draw[thick, <-] (.75,2) .. controls ++(0,-.75) and ++(0,-.75) .. (0,2);
    \node at (.4,1.2) {\tiny $\lambda$};
    \node at (-.2,1.9) {\tiny $i$};
\end{tikzpicture}} \right) =   1 + \lambda_i   &&
    &
\deg \left( \hackcenter{\begin{tikzpicture}[scale=0.8]
    \draw[thick, ->] (.75,2) .. controls ++(0,-.75) and ++(0,-.75) .. (0,2);
    \node at (.4,1.2) {\tiny $\lambda$};
    \node at (.95,1.9) {\tiny $i$};
\end{tikzpicture}} \right) = 1 -  \lambda_i \\
\deg \left( \hackcenter{\begin{tikzpicture}[scale=0.8]
    \draw[thick, ->] (.75,-2) .. controls ++(0,.75) and ++(0,.75) .. (0,-2);
    \node at (.4,-1.2) {\tiny $\lambda$};
    \node at (.95,-1.9) {\tiny $i$};
\end{tikzpicture}} \right) = 1 +  \lambda_i &&
    &
\deg \left( \hackcenter{\begin{tikzpicture}[scale=0.8]
    \draw[thick, <-] (.75,-2) .. controls ++(0,.75) and ++(0,.75) .. (0,-2);
    \node at (.4,-1.2) {\tiny $\lambda$};
    \node at (-.2,-1.9) {\tiny $i$};
\end{tikzpicture}} \right)  1 -  \lambda_i  
\end{align}
For a more complicated diagram of a $2$-morphism we sum up the contributions from the generating $2$-morphisms. In particular, this implies that the identity $2$-morphisms have degree $0$. Lastly we extend the degree using the $\Bbbk$-vector space structure of $2$-morphisms.

The $2$-morphisms satisfy the following relations:
\begin{enumerate}
\item \label{item_cycbiadjoint-cyc} The $1$-morphisms $\cal{E}_i \onel$ and $\cal{F}_i \onel$ are biadjoint (up to a specified degree shift).
  \item The dot $2$-morphisms are cyclic with respect to this biadjoint structure.
\begin{equation}\label{eq_cyclic_dot-cyc}
\hackcenter{\begin{tikzpicture}[scale=0.8]
    \draw[thick, ->]  (0,.4) .. controls ++(0,.6) and ++(0,.6) .. (-.75,.4) to (-.75,-1);
    \draw[thick, <-](0,.4) to (0,-.4) .. controls ++(0,-.6) and ++(0,-.6) .. (.75,-.4) to (.75,1);
    \filldraw  (0,-.2) circle (2.5pt);
    \node at (-1,.9) { $\lambda$};
    \node at (.95,.8) {\tiny $i$};
\end{tikzpicture}}
\;\; = \;\;
\hackcenter{\begin{tikzpicture}[scale=0.8]
    \draw[thick, <-]  (0,-1) to (0,1);
    \node at (.8,-.4) { $\lambda+\alpha_i$};
    \node at (-.5,-.4) { $\lambda$};
    \filldraw  (0,.2) circle (2.5pt);
    \node at (-.2,.8) {\tiny $i$};
\end{tikzpicture}}
\;\; = \;\;
\hackcenter{\begin{tikzpicture}[scale=0.8]
    \draw[thick, ->]  (0,.4) .. controls ++(0,.6) and ++(0,.6) .. (.75,.4) to (.75,-1);
    \draw[thick, <-](0,.4) to (0,-.4) .. controls ++(0,-.6) and ++(0,-.6) .. (-.75,-.4) to (-.75,1);
    \filldraw  (0,-.2) circle (2.5pt);
    \node at (1.3,.9) { $\lambda + \alpha_i$};
    \node at (-.95,.8) {\tiny $i$};
\end{tikzpicture}}
\end{equation}

The $Q$-cyclic relations for crossings are given by
\begin{equation} \label{eq_cyclic}
\hackcenter{
\begin{tikzpicture}[scale=0.8]
    \draw[thick, <-] (0,0) .. controls (0,.5) and (.75,.5) .. (.75,1.0);
    \draw[thick, <-] (.75,0) .. controls (.75,.5) and (0,.5) .. (0,1.0);
    \node at (1.1,.65) { $\lambda$};
    \node at (-.2,.1) {\tiny $i$};
    \node at (.95,.1) {\tiny $j$};
\end{tikzpicture}}
\;\; := \;\; t_{ij}^{-1}
\hackcenter{\begin{tikzpicture}[scale=0.7]
    \draw[thick, ->] (0,0) .. controls (0,.5) and (.75,.5) .. (.75,1.0);
    \draw[thick, ->] (.75,0) .. controls (.75,.5) and (0,.5) .. (0,1.0);
    \draw[thick] (0,0) .. controls ++(0,-.4) and ++(0,-.4) .. (-.75,0) to (-.75,2);
    \draw[thick] (.75,0) .. controls ++(0,-1.2) and ++(0,-1.2) .. (-1.5,0) to (-1.55,2);
    \draw[thick, ->] (.75,1.0) .. controls ++(0,.4) and ++(0,.4) .. (1.5,1.0) to (1.5,-1);
    \draw[thick, ->] (0,1.0) .. controls ++(0,1.2) and ++(0,1.2) .. (2.25,1.0) to (2.25,-1);
    \node at (-.35,.75) {  $\lambda$};
    \node at (1.3,-.7) {\tiny $i$};
    \node at (2.05,-.7) {\tiny $j$};
    \node at (-.9,1.7) {\tiny $i$};
    \node at (-1.7,1.7) {\tiny $j$};
\end{tikzpicture}}
\quad = \quad t_{ji}^{-1}
\hackcenter{\begin{tikzpicture}[xscale=-1.0, scale=0.7]
    \draw[thick, ->] (0,0) .. controls (0,.5) and (.75,.5) .. (.75,1.0);
    \draw[thick, ->] (.75,0) .. controls (.75,.5) and (0,.5) .. (0,1.0);
    \draw[thick] (0,0) .. controls ++(0,-.4) and ++(0,-.4) .. (-.75,0) to (-.75,2);
    \draw[thick] (.75,0) .. controls ++(0,-1.2) and ++(0,-1.2) .. (-1.5,0) to (-1.55,2);
    \draw[thick, ->] (.75,1.0) .. controls ++(0,.4) and ++(0,.4) .. (1.5,1.0) to (1.5,-1);
    \draw[thick, ->] (0,1.0) .. controls ++(0,1.2) and ++(0,1.2) .. (2.25,1.0) to (2.25,-1);
    \node at (1.2,.75) {  $\lambda$};
    \node at (1.3,-.7) {\tiny $j$};
    \node at (2.05,-.7) {\tiny $i$};
    \node at (-.9,1.7) {\tiny $j$};
    \node at (-1.7,1.7) {\tiny $i$};
\end{tikzpicture}} .
\end{equation}

Sideways crossings are equivalently defined by the following identities:

\begin{equation} \label{eq_crossl-gen-cyc}
\hackcenter{
\begin{tikzpicture}[scale=0.8]
    \draw[thick, ->] (0,0) .. controls (0,.5) and (.75,.5) .. (.75,1.0);
    \draw[thick, <-] (.75,0) .. controls (.75,.5) and (0,.5) .. (0,1.0);
    \node at (1.1,.65) { $\lambda$};
    \node at (-.2,.1) {\tiny $i$};
    \node at (.95,.1) {\tiny $j$};
\end{tikzpicture}}
\;\; := \;\;
\hackcenter{\begin{tikzpicture}[scale=0.7]
    \draw[thick, ->] (0,0) .. controls (0,.5) and (.75,.5) .. (.75,1.0);
    \draw[thick, ->] (.75,-.5) to (.75,0) .. controls (.75,.5) and (0,.5) .. (0,1.0) to (0,1.5);
    \draw[thick] (0,0) .. controls ++(0,-.4) and ++(0,-.4) .. (-.75,0) to (-.75,1.5);
    \draw[thick, ->] (.75,1.0) .. controls ++(0,.4) and ++(0,.4) .. (1.5,1.0) to (1.5,-.5);
    \node at (1.85,.55) {  $\lambda$};
    \node at (1.75,-.2) {\tiny $j$};
    \node at (.55,-.2) {\tiny $i$};
    \node at (-.9,1.2) {\tiny $j$};
    \node at (.25,1.2) {\tiny $i$};
\end{tikzpicture}}
 \qquad \quad
 \hackcenter{
\begin{tikzpicture}[scale=0.8]
    \draw[thick, <-] (0,0) .. controls (0,.5) and (.75,.5) .. (.75,1.0);
    \draw[thick, ->] (.75,0) .. controls (.75,.5) and (0,.5) .. (0,1.0);
    \node at (1.1,.65) { $\lambda$};
    \node at (-.2,.1) {\tiny $i$};
    \node at (.95,.1) {\tiny $j$};
\end{tikzpicture}}
\;\; = \;\;
\hackcenter{\begin{tikzpicture}[xscale=-1.0, scale=0.7]
    \draw[thick, ->] (0,0) .. controls (0,.5) and (.75,.5) .. (.75,1.0);
    \draw[thick, ->] (.75,-.5) to (.75,0) .. controls (.75,.5) and (0,.5) .. (0,1.0) to (0,1.5);
    \draw[thick] (0,0) .. controls ++(0,-.4) and ++(0,-.4) .. (-.75,0) to (-.75,1.5);
    \draw[thick, ->] (.75,1.0) .. controls ++(0,.4) and ++(0,.4) .. (1.5,1.0) to (1.5,-.5);
    \node at (-1.1,.55) {  $\lambda$};
    \node at (1.75,-.2) {\tiny $i$};
    \node at (1,-.2) {\tiny $j$};
    \node at (-.9,1.2) {\tiny $i$};
    \node at (.25,1.2) {\tiny $j$};
\end{tikzpicture}}
\end{equation}

\item The $\cal{E}$'s (respectively $\cal{F}$'s) carry an action of the KLR algebra for a fixed choice of scalars $Q$.
The KLR algebra $R$ associated to a fixed set of parameters $Q$ is defined by finite $\Bbbk$-linear combinations of braid--like diagrams in the plane, where each strand is labeled by a vertex $i \in I$.  Strands can intersect and can carry dots, but triple intersections are not allowed.  Diagrams are considered up to planar isotopy that do not change the combinatorial type of the diagram. We recall the local relations.

\begin{enumerate}

\item The quadratic KLR relations are
\begin{equation}
\hackcenter{
\begin{tikzpicture}[scale=0.8]
    \draw[thick, ->] (0,0) .. controls ++(0,.5) and ++(0,-.4) .. (.75,.8) .. controls ++(0,.4) and ++(0,-.5) .. (0,1.6);
    \draw[thick, ->] (.75,0) .. controls ++(0,.5) and ++(0,-.4) .. (0,.8) .. controls ++(0,.4) and ++(0,-.5) .. (.75,1.6);
    \node at (1.1,1.25) { $\lambda$};
    \node at (-.2,.1) {\tiny $i$};
    \node at (.95,.1) {\tiny $j$};
\end{tikzpicture}}
 \qquad = \qquad
 \left\{
 \begin{array}{ccc}
     t_{ij}\;
     \hackcenter{
\begin{tikzpicture}[scale=0.8]
    \draw[thick, ->] (0,0) to (0,1.6);
    \draw[thick, ->] (.75,0) to (.75,1.6);
    \node at (1.1,1.25) { $\lambda$};
    \node at (-.2,.1) {\tiny $i$};
    \node at (.95,.1) {\tiny $j$};
\end{tikzpicture}}&  &  \text{if $(\alpha_i,\alpha_j)=0$ or $(\alpha_i,\alpha_j)=2$,}\\ \\
  t_{ij}
  \;      \hackcenter{
\begin{tikzpicture}[scale=0.8]
    \draw[thick, ->] (0,0) to (0,1.6);
    \draw[thick, ->] (.75,0) to (.75,1.6);
    \node at (1.1,1.25) { $\lambda$}; \filldraw  (0,.8) circle (2.75pt);
    \node at (-.2,.1) {\tiny $i$};
    \node at (.95,.1) {\tiny $j$};
\end{tikzpicture}}
  \;\; + \;\; t_{ji} \;
 \hackcenter{
\begin{tikzpicture}[scale=0.8]
    \draw[thick, ->] (0,0) to (0,1.6);
    \draw[thick, ->] (.75,0) to (.75,1.6);
    \node at (1.1,1.25) { $\lambda$}; \filldraw  (.75,.8) circle (2.75pt);
    \node at (-.2,.1) {\tiny $i$};
    \node at (.95,.1) {\tiny $j$};
\end{tikzpicture}}
 
   &  & \text{if $(\alpha_i,\alpha_j)=-1$.}
 \end{array}
 \right. \label{eq_r2_ij-gen-cyc}
\end{equation}

\item The dot sliding relations are
\begin{align} \label{eq:dotslide}
\hackcenter{\begin{tikzpicture}[scale=0.8]
    \draw[thick, ->] (0,0) .. controls ++(0,.55) and ++(0,-.5) .. (.75,1)
        node[pos=.25, shape=coordinate](DOT){};
    \draw[thick, ->] (.75,0) .. controls ++(0,.5) and ++(0,-.5) .. (0,1);
    \filldraw  (DOT) circle (2.5pt);
    \node at (-.2,.15) {\tiny $i$};
    \node at (.95,.15) {\tiny $j$};
\end{tikzpicture}}
\;\; - \;\;
\hackcenter{\begin{tikzpicture}[scale=0.8]
    \draw[thick, ->] (0,0) .. controls ++(0,.55) and ++(0,-.5) .. (.75,1)
        node[pos=.75, shape=coordinate](DOT){};
    \draw[thick, ->] (.75,0) .. controls ++(0,.5) and ++(0,-.5) .. (0,1);
    \filldraw  (DOT) circle (2.5pt);
    \node at (-.2,.15) {\tiny $i$};
    \node at (.95,.15) {\tiny $j$};
\end{tikzpicture}}
\;\; = \;\;
\hackcenter{\begin{tikzpicture}[scale=0.8]
    \draw[thick, ->] (0,0) .. controls ++(0,.55) and ++(0,-.5) .. (.75,1);
    \draw[thick, ->] (.75,0) .. controls ++(0,.5) and ++(0,-.5) .. (0,1) node[pos=.75, shape=coordinate](DOT){};
    \filldraw  (DOT) circle (2.5pt);
    \node at (-.2,.15) {\tiny $i$};
    \node at (.95,.15) {\tiny $j$};
\end{tikzpicture}}
\;\; - \;\;
\hackcenter{\begin{tikzpicture}[scale=0.8]
    \draw[thick, ->] (0,0) .. controls ++(0,.55) and ++(0,-.5) .. (.75,1);
    \draw[thick, ->] (.75,0) .. controls ++(0,.5) and ++(0,-.5) .. (0,1) node[pos=.25, shape=coordinate](DOT){};
    \filldraw  (DOT) circle (2.5pt);
    \node at (-.2,.15) {\tiny $i$};
    \node at (.95,.15) {\tiny $j$};
\end{tikzpicture}}
 \;\; = \;\;
 \delta_{i,j}
\hackcenter{\begin{tikzpicture}[scale=0.8]
    \draw[thick, ->] (0,0) to  (0,1);
    \draw[thick, ->] (.75,0)to (.75,1) ;
    \node at (-.2,.15) {\tiny $i$};
    \node at (.95,.15) {\tiny $i$};
\end{tikzpicture}}.
\end{align}

\item The cubic KLR relations are
\begin{equation} \label{eq:KLRqubic}
\hackcenter{\begin{tikzpicture}[scale=0.8]
    \draw[thick, ->] (0,0) .. controls ++(0,1) and ++(0,-1) .. (1.5,2);
    \draw[thick, ] (.75,0) .. controls ++(0,.5) and ++(0,-.5) .. (0,1);
    \draw[thick, ->] (0,1) .. controls ++(0,.5) and ++(0,-.5) .. (0.75,2);
    \draw[thick, ->] (1.5,0) .. controls ++(0,1) and ++(0,-1) .. (0,2);
    \node at (-.2,.15) {\tiny $i$};
    \node at (.95,.15) {\tiny $j$};
    \node at (1.75,.15) {\tiny $k$};
\end{tikzpicture}}
\;\;- \;\;
\hackcenter{\begin{tikzpicture}[scale=0.8]
    \draw[thick, ->] (0,0) .. controls ++(0,1) and ++(0,-1) .. (1.5,2);
    \draw[thick, ] (.75,0) .. controls ++(0,.5) and ++(0,-.5) .. (1.5,1);
    \draw[thick, ->] (1.5,1) .. controls ++(0,.5) and ++(0,-.5) .. (0.75,2);
    \draw[thick, ->] (1.5,0) .. controls ++(0,1) and ++(0,-1) .. (0,2);
    \node at (-.2,.15) {\tiny $i$};
    \node at (.95,.15) {\tiny $j$};
    \node at (1.75,.15) {\tiny $k$};
\end{tikzpicture}}
\;\; = \;\;   -(\alpha_i,\alpha_j) \; \delta_{i,k} \; t_{ij}
\hackcenter{\begin{tikzpicture}[scale=0.8]
    \draw[thick, ->] (0,0) to (0,2);
    \draw[thick, -> ] (.75,0) to (0.75,2);
    \draw[thick, ->] (1.5,0) to (1.5,2);
    \node at (-.2,.15) {\tiny $i$};
    \node at (.95,.15) {\tiny $j$};
    \node at (1.75,.15) {\tiny $i$};
\end{tikzpicture}}.
\end{equation}
\end{enumerate}

\item When $i \ne j$ one has the mixed relations  relating $\cal{E}_i \cal{F}_j$ and $\cal{F}_j \cal{E}_i$
\begin{equation}  \label{mixed_rel-cyc}
 \hackcenter{\begin{tikzpicture}[scale=0.8]
    \draw[thick,<-] (0,0) .. controls ++(0,.5) and ++(0,-.5) .. (.75,1);
    \draw[thick] (.75,0) .. controls ++(0,.5) and ++(0,-.5) .. (0,1);
    \draw[thick, ->] (0,1 ) .. controls ++(0,.5) and ++(0,-.5) .. (.75,2);
    \draw[thick] (.75,1) .. controls ++(0,.5) and ++(0,-.5) .. (0,2);
        \node at (-.2,.15) {\tiny $i$};
    \node at (.95,.15) {\tiny $j$};
\end{tikzpicture}}
\;\; = \;\; t_{ij}
\hackcenter{\begin{tikzpicture}[scale=0.8]
    \draw[thick, <-] (0,0) -- (0,2);
    \draw[thick, ->] (.75,0) -- (.75,2);
     \node at (-.2,.2) {\tiny $i$};
    \node at (.95,.2) {\tiny $j$};
\end{tikzpicture}}
\qquad \qquad
 \hackcenter{\begin{tikzpicture}[scale=0.8]
    \draw[thick] (0,0) .. controls ++(0,.5) and ++(0,-.5) .. (.75,1);
    \draw[thick, <-] (.75,0) .. controls ++(0,.5) and ++(0,-.5) .. (0,1);
    \draw[thick] (0,1 ) .. controls ++(0,.5) and ++(0,-.5) .. (.75,2);
    \draw[thick, ->] (.75,1) .. controls ++(0,.5) and ++(0,-.5) .. (0,2);
        \node at (-.2,.15) {\tiny $i$};
    \node at (.95,.15) {\tiny $j$};
\end{tikzpicture}}
\;\; = \;\; t_{ji}
\hackcenter{\begin{tikzpicture}[scale=0.8]
    \draw[thick, ->] (0,0) -- (0,2);
    \draw[thick, <-] (.75,0) -- (.75,2);
     \node at (-.2,.2) {\tiny $i$};
    \node at (.95,.2) {\tiny $j$};
\end{tikzpicture}} .
\end{equation}

\item Negative degree bubbles are zero.  That is for all $m \in \Z_{>0}$ one has
\begin{equation}\label{eq:bubblesarezero}
 \hackcenter{ \begin{tikzpicture} [scale=.8]
 \draw (-.15,.35) node { $\scs i$};
 \draw[ ]  (0,0) arc (180:360:0.5cm) [thick];
 \draw[<- ](1,0) arc (0:180:0.5cm) [thick];
\filldraw  [black] (.1,-.25) circle (2.5pt);
 \node at (-.2,-.5) {\tiny $m$};
 \node at (1.15,.8) { $\lambda  $};
\end{tikzpicture} } \;  = 0\quad \text{if $m < \lambda_i -1$}, \qquad \quad
\;
\hackcenter{ \begin{tikzpicture} [scale=.8]
 \draw (-.15,.35) node { $\scs i$};
 \draw  (0,0) arc (180:360:0.5cm) [thick];
 \draw[->](1,0) arc (0:180:0.5cm) [thick];
\filldraw  [black] (.9,-.25) circle (2.5pt);
 \node at (1,-.5) {\tiny $m$};
 \node at (1.15,.8) { $\lambda $};
\end{tikzpicture} } \;  = 0 \quad  \text{if $m < -\lambda_i -1$}.
\end{equation}
Furthermore, dotted bubbles of degree zero are scalar multiples of the identity $2$-morphisms
\begin{equation} \label{eq:degreezero}
 \hackcenter{ \begin{tikzpicture} [scale=.8]
 \draw (-.15,.35) node { $\scs i$};
 \draw  (0,0) arc (180:360:0.5cm) [thick];
 \draw[,<-](1,0) arc (0:180:0.5cm) [thick];
\filldraw  [black] (.1,-.25) circle (2.5pt);
 \node at (-.5,-.5) {\tiny $\lambda_i -1$};
 \node at (1.15,1) { $\lambda  $};
\end{tikzpicture} }
\;\; =
\Id_{\1_{\l}}
\quad \text{for $  \lambda_i \geq 1$}, \qquad \quad
\;
\hackcenter{ \begin{tikzpicture} [scale=.8]
 \draw (-.15,.35) node { $\scs i$};
 \draw  (0,0) arc (180:360:0.5cm) [thick];
 \draw[->](1,0) arc (0:180:0.5cm) [thick];
\filldraw  [black] (.9,-.25) circle (2.5pt);
 \node at (1.35,-.5) {\tiny $-\lambda_i -1$};
 \node at (1.15,1) { $\lambda $};
\end{tikzpicture} }
\;\; =
\Id_{\1_{\l}} \quad \text{if $   \lambda_i \leq -1$}.
\end{equation}
 We introduce formal symbols called \emph{fake bubbles}.  These are positive degree endomorphisms of $\onel$ that carry a formal label by a negative number of dots.

\begin{itemize}
  \item Degree zero fake bubbles are normalized by
  \begin{equation}
 \hackcenter{ \begin{tikzpicture} [scale=.8]
 \draw (-.15,.35) node { $\scs i$};
 \draw  (0,0) arc (180:360:0.5cm) [thick];
 \draw[,<-](1,0) arc (0:180:0.5cm) [thick];
\filldraw  [black] (.1,-.25) circle (2.5pt);
 \node at (-.5,-.55) {\tiny $\lambda_i -1$};
 \node at (1.15,1) { $\lambda  $};
\end{tikzpicture} } \;\; =  \Id_{\1_{\l}}
\quad \text{for $  \lambda_i < 1$}, \qquad \quad
\;
\hackcenter{ \begin{tikzpicture} [scale=.8]
 \draw (-.15,.35) node { $\scs i$};
 \draw  (0,0) arc (180:360:0.5cm) [thick];
 \draw[->](1,0) arc (0:180:0.5cm) [thick];
\filldraw  [black] (.9,-.25) circle (2.5pt);
 \node at (1.35,-.5) {\tiny $-\lambda_i -1$};
 \node at (1.15,1) { $\lambda $};
\end{tikzpicture} } \;\; =
\Id_{\1_{\l}} \quad \text{if $   \lambda_i > -1$}.
\end{equation}

\item Higher degree fake bubbles for $\lambda_i <0$ are defined inductively as
\begin{equation}\label{eq:infgrassmanian1neg}
  \hackcenter{ \begin{tikzpicture} [scale=.8]
 \draw (-.15,.35) node { $\scs i$};
 \draw  (0,0) arc (180:360:0.5cm) [thick];
 \draw[,<-](1,0) arc (0:180:0.5cm) [thick];
\filldraw  [black] (.1,-.25) circle (2.5pt);
 \node at (-.65,-.55) {\tiny $\lambda_i -1+j$};
 \node at (1.15,1) { $\lambda  $};
\end{tikzpicture} } \;\; = \;\;
\left\{
  \begin{array}{ll}
    -
\displaystyle \sum_{\stackrel{\scs x+y=j}{\scs y\geq 1}} \hackcenter{ \begin{tikzpicture}[scale=.8]
 \draw (-.15,.35) node { $\scs i$};
 \draw  (0,0) arc (180:360:0.5cm) [thick];
 \draw[,<-](1,0) arc (0:180:0.5cm) [thick];
\filldraw  [black] (.1,-.25) circle (2.5pt);
 \node at (-.35,-.45) {\tiny $\overset{\lambda_i-1}{+x}$};
 \node at (.85,1) { $\lambda$};
\end{tikzpicture}  \;\;
\begin{tikzpicture}[scale=.8]
 \draw (-.15,.35) node { $\scs i$};
 \draw  (0,0) arc (180:360:0.5cm) [thick];
 \draw[->](1,0) arc (0:180:0.5cm) [thick];
\filldraw  [black] (.9,-.25) circle (2.5pt);
 \node at (1.45,-.5) {\tiny $\overset{-\lambda_i-1}{+y}$};
 \node at (1.15,1.1) { $\;$};
\end{tikzpicture}  }
  & \hbox{if $0 < j < -\lambda_i+1$;} \\
    0, & \hbox{if $j<0$.}
  \end{array}
\right.
\end{equation}

\item Higher degree fake bubbles for $\lambda_i >0$ are defined inductively by
\begin{equation}\label{eq:infgrassmanian2pos}
\hackcenter{ \begin{tikzpicture} [scale=.8]
 \draw (-.15,.35) node { $\scs i$};
 \draw  (0,0) arc (180:360:0.5cm) [thick];
 \draw[->](1,0) arc (0:180:0.5cm) [thick];
\filldraw  [black] (.9,-.25) circle (2.5pt);
 \node at (1.5,-.5) {\tiny $-\lambda_i -1+j$};
 \node at (1.15,1) { $\lambda $};
\end{tikzpicture} } \;\; = \;\;
\left\{
  \begin{array}{ll}
    -
\displaystyle\sum_{\stackrel{\scs x+y=j}{\scs x\geq 1}} \hackcenter{ \begin{tikzpicture}[scale=.8]
 \draw (-.15,.35) node { $\scs i$};
 \draw  (0,0) arc (180:360:0.5cm) [thick];
 \draw[,<-](1,0) arc (0:180:0.5cm) [thick];
\filldraw  [black] (.1,-.25) circle (2.5pt);
 \node at (-.35,-.45) {\tiny $\overset{\lambda_i-1}{+x}$};
 \node at (.85,1) { $\lambda$};
\end{tikzpicture}  \;\;
\begin{tikzpicture}[scale=.8]
 \draw (.3,.125) node {};
 \draw  (0,0) arc (180:360:0.5cm) [thick];
 \draw[->](1,0) arc (0:180:0.5cm) [thick];
\filldraw  [black] (.9,-.25) circle (2.5pt);
 \node at (1.45,-.5) {\tiny $\overset{-\lambda_i-1}{+y}$};
 \node at (1.15,1.1) { $\;$};
\end{tikzpicture}  }
  & \hbox{if $0 < j < \lambda_i+1$;} \\
    0, & \hbox{if $j<0$.}
  \end{array}
\right.
\end{equation}
\end{itemize}
The above relations are sometimes referred to as the \textit{infinite Grassmannian relations}.

\item \label{sl2relation} The $\mf{sl}_2$ relations (which we also refer to as the $\sE \sF$ and $\sF \sE$ decompositions) are:
\begin{equation}
\begin{split}
 \hackcenter{\begin{tikzpicture}[scale=0.8]
    \draw[thick] (0,0) .. controls ++(0,.5) and ++(0,-.5) .. (.75,1);
    \draw[thick,<-] (.75,0) .. controls ++(0,.5) and ++(0,-.5) .. (0,1);
    \draw[thick] (0,1 ) .. controls ++(0,.5) and ++(0,-.5) .. (.75,2);
    \draw[thick, ->] (.75,1) .. controls ++(0,.5) and ++(0,-.5) .. (0,2);
        \node at (-.2,.15) {\tiny $i$};
    \node at (.95,.15) {\tiny $i$};
     \node at (1.1,1.44) { $\lambda $};
\end{tikzpicture}}
\;\; + \;\;
\hackcenter{\begin{tikzpicture}[scale=0.8]
    \draw[thick, ->] (0,0) -- (0,2);
    \draw[thick, <-] (.75,0) -- (.75,2);
     \node at (-.2,.2) {\tiny $i$};
    \node at (.95,.2) {\tiny $i$};
     \node at (1.1,1.44) { $\lambda $};
\end{tikzpicture}}
\;\; = \;\;
\sum_{\overset{f_1+f_2+f_3}{=\lambda_i-1}}\hackcenter{
 \begin{tikzpicture}[scale=0.8]
 \draw[thick,->] (0,-1.0) .. controls ++(0,.5) and ++ (0,.5) .. (.8,-1.0) node[pos=.75, shape=coordinate](DOT1){};
  \draw[thick,<-] (0,1.0) .. controls ++(0,-.5) and ++ (0,-.5) .. (.8,1.0) node[pos=.75, shape=coordinate](DOT3){};
 \draw[thick,->] (0,0) .. controls ++(0,-.45) and ++ (0,-.45) .. (.8,0)node[pos=.25, shape=coordinate](DOT2){};
 \draw[thick] (0,0) .. controls ++(0,.45) and ++ (0,.45) .. (.8,0);
 \draw (-.15,.7) node { $\scs i$};
\draw (1.05,0) node { $\scs i$};
\draw (-.15,-.7) node { $\scs i$};
 \node at (.95,.65) {\tiny $f_3$};
 \node at (-.55,-.05) {\tiny $\overset{-\lambda_i-1}{+f_2}$};
  \node at (.95,-.65) {\tiny $f_1$};
 \node at (1.65,.3) { $\lambda $};
 \filldraw[thick]  (DOT3) circle (2.5pt);
  \filldraw[thick]  (DOT2) circle (2.5pt);
  \filldraw[thick]  (DOT1) circle (2.5pt);
\end{tikzpicture} }
\\
 \hackcenter{\begin{tikzpicture}[scale=0.8]
    \draw[thick,<-] (0,0) .. controls ++(0,.5) and ++(0,-.5) .. (.75,1);
    \draw[thick] (.75,0) .. controls ++(0,.5) and ++(0,-.5) .. (0,1);
    \draw[thick, ->] (0,1 ) .. controls ++(0,.5) and ++(0,-.5) .. (.75,2);
    \draw[thick] (.75,1) .. controls ++(0,.5) and ++(0,-.5) .. (0,2);
        \node at (-.2,.15) {\tiny $i$};
    \node at (.95,.15) {\tiny $i$};
     \node at (1.1,1.44) { $\lambda $};
\end{tikzpicture}}
\;\; + \;\;
\hackcenter{\begin{tikzpicture}[scale=0.8]
    \draw[thick, <-] (0,0) -- (0,2);
    \draw[thick, ->] (.75,0) -- (.75,2);
     \node at (-.2,.2) {\tiny $i$};
    \node at (.95,.2) {\tiny $i$};
     \node at (1.1,1.44) { $\lambda $};
\end{tikzpicture}}
\;\; = \;\;
\sum_{\overset{f_1+f_2+f_3}{=-\lambda_i-1}}\hackcenter{
 \begin{tikzpicture}[scale=0.8]
 \draw[thick,<-] (0,-1.0) .. controls ++(0,.5) and ++ (0,.5) .. (.8,-1.0) node[pos=.75, shape=coordinate](DOT1){};
  \draw[thick,->] (0,1.0) .. controls ++(0,-.5) and ++ (0,-.5) .. (.8,1.0) node[pos=.75, shape=coordinate](DOT3){};
 \draw[thick ] (0,0) .. controls ++(0,-.45) and ++ (0,-.45) .. (.8,0)node[pos=.25, shape=coordinate](DOT2){};
 \draw[thick, ->] (0,0) .. controls ++(0,.45) and ++ (0,.45) .. (.8,0);
 \draw (-.15,.7) node { $\scs i$};
\draw (1.05,0) node { $\scs i$};
\draw (-.15,-.7) node { $\scs i$};
 \node at (.95,.65) {\tiny $f_3$};
 \node at (-.55,-.05) {\tiny $\overset{\lambda_i-1}{+f_2}$};
  \node at (.95,-.65) {\tiny $f_1$};
 \node at (1.65,.3) { $\lambda $};
 \filldraw[thick]  (DOT3) circle (2.5pt);
  \filldraw[thick]  (DOT2) circle (2.5pt);
  \filldraw[thick]  (DOT1) circle (2.5pt);
\end{tikzpicture} } . \label{eq:sl2}
\end{split}
\end{equation}
\end{enumerate}
\end{definition}

It is sometimes convenient to use a shorthand notation for the bubbles that emphasizes their degrees. 
\begin{equation}
\label{starnotation}
\CIRCLEweb[0.5][<][r][\lambda][black][0] \;\;:= \;\; \hackcenter{ \begin{tikzpicture} [scale=.8]
 \draw  (-.75,1) arc (360:180:.45cm) [thick];
 \draw[<-](-.75,1) arc (0:180:.45cm) [thick];
     \filldraw  [black] (-1.55,.75) circle (2.5pt);
        \node at (-1.3,.3) { $\scriptstyle \ast + r$};
        \node at (-1.4,1.7) { $i $};
 \node at (-.2,1.5) { $\lambda $};
\end{tikzpicture}}
\;\; := \;\;
\hackcenter{ \begin{tikzpicture} [scale=.8]
 \draw  (-.75,1) arc (360:180:.45cm) [thick];
 \draw[<-](-.75,1) arc (0:180:.45cm) [thick];
     \filldraw  [black] (-1.55,.75) circle (2.5pt);
        \node at (-1.3,.3) { $\scriptstyle \lambda_i -1 + r$};
        \node at (-1.4,1.7) { $i $};
 \node at (-.2,1.5) { $\lambda $};
\end{tikzpicture}}
\qquad \qquad
\CIRCLEweb[0.5][>][r][\lambda][black][0] \;\; := \;\; \hackcenter{\begin{tikzpicture} [scale=.8]
 \draw  (-.75,1) arc (360:180:.45cm) [thick];
 \draw[->](-.75,1) arc (0:180:.45cm) [thick];
     \filldraw  [black] (-1.55,.75) circle (2.5pt);
        \node at (-1.3,.3) { $\scriptstyle \ast + r$};
        \node at (-1.4,1.7) { $i $};
 \node at (-.2,1.5) { $\lambda $};
\end{tikzpicture}}
\;\; := \;\;
\hackcenter{ \begin{tikzpicture} [scale=.8]
 \draw  (-.75,1) arc (360:180:.45cm) [thick];
 \draw[->](-.75,1) arc (0:180:.45cm) [thick];
     \filldraw  [black] (-1.55,.75) circle (2.5pt);
        \node at (-1.3,.3) { $\scriptstyle -\lambda_i -1 + r$};
        \node at (-1.4,1.7) { $i $};
 \node at (-.2,1.5) { $\lambda $};
\end{tikzpicture}}
\end{equation}

\begin{definition}
A $2$-representation of $\Ucat(\mf{g})$ is a graded additive $\Bbbk$-linear $2$-functor $\Ucat(\mf{g}) \to \mathcal{K}$ for some graded, additive $2$-category $\mathcal{K}$.
\end{definition}

\subsection{Symmetric functions} \label{subsec:symmetric functions}

The higher relations in the 2-category $\Ucat$ give rise to some surprising connections between the graded endomorphism algebra $\END_{\Ucat}(\onel)$ and the algebra of symmetric functions. 
Recall from~\cite[Corollary 4.8]{Elias_2016} that for each $i\in I$, there is an injective map from the ring of symmetric functions into $\END_{\Ucat}(\onel)$ given for $\lambda \geq 0$ by 

\begin{eqnarray}
 \phi^n \maps \Sym = \Z[e_1,e_2, \dots] &\to& \END_{\Ucat}(\onel),
 \\ \label{eq_isom_h}
  e_r(\underline{x}) & \mapsto & (-1)^r \; 
    \hackcenter{ \begin{tikzpicture} [scale=.8]
 \draw  (-.75,1) arc (360:180:.45cm) [thick];
 \draw[->](-.75,1) arc (0:180:.45cm) [thick];
     \filldraw  [black] (-1.55,.75) circle (2.5pt);
        \node at (-1.3,.3) { $\scriptstyle \ast + r$};
        \node at (-1.4,1.7) { $i $};
 \node at (-.2,1.5) { $\lambda $};
\end{tikzpicture}}
\end{eqnarray}
where $\underline{x}$ is an infinite set of variables and $e_r(\underline{x})$ is the elementary symmetric function.  This isomorphism can alternatively be described using fake bubbles
\begin{eqnarray}
 \phi^n \maps \Sym &\to& \END_{\Ucat}(\onel),
 \\ \label{eq_isom_e}
 h_r(\underline{x}) & \mapsto & 
 \hackcenter{ \begin{tikzpicture} [scale=.8]
 \draw  (-.75,1) arc (360:180:.45cm) [thick];
 \draw[<-](-.75,1) arc (0:180:.45cm) [thick];
     \filldraw  [black] (-1.55,.75) circle (2.5pt);
        \node at (-1.3,.3) { $\scriptstyle \ast + r$};
        \node at (-1.4,1.7) { $i $};
 \node at (-.2,1.5) { $\lambda $};
\end{tikzpicture}}
\end{eqnarray}

In what follows, it will be interesting to consider which products of closed diagrams correspond to the $\Q$-basis of $\Sym$ given by the power sum $p_r$ symmetric functions (see e.g. p.16 in~\cite{McD}).
Using a formula that expresses power sum symmetric functions in terms of products of complete and elementary symmetric functions, we can denote by $p_{i,r}(\lambda)$ for $r>0$, the image of the power sum symmetric polynomial on $i$-labelled strands:
\begin{equation} \label{eq_defpil}
 p_{i,r}(\lambda) := \sum_{a+b=r} (a+1)
  \hackcenter{ \begin{tikzpicture}[scale=.8]
 \draw (-.15,.35) node { $\scs i$};
 \draw  (0,0) arc (180:360:0.5cm) [thick];
 \draw[,<-](1,0) arc (0:180:0.5cm) [thick];
\filldraw  [black] (.1,-.25) circle (2.5pt);
 \node at (-.35,-.45) {\tiny $\overset{\lambda_i-1}{+a}$};
 \node at (.85,1) { $\lambda$};
\end{tikzpicture}  \;\;
\begin{tikzpicture}[scale=.8]
 \draw (-.15,.35) node { $\scs i$};
 \draw  (0,0) arc (180:360:0.5cm) [thick];
 \draw[->](1,0) arc (0:180:0.5cm) [thick];
\filldraw  [black] (.9,-.25) circle (2.5pt);
 \node at (1.45,-.5) {\tiny $\overset{-\lambda_i-1}{+b}$};
 \node at (1.15,1.1) { $\;$};
\end{tikzpicture}  }
 =  - \sum_{a+b=r} (b+1)
  \hackcenter{ \begin{tikzpicture}[scale=.8]
 \draw (-.15,.35) node { $\scs i$};
 \draw  (0,0) arc (180:360:0.5cm) [thick];
 \draw[,<-](1,0) arc (0:180:0.5cm) [thick];
\filldraw  [black] (.1,-.25) circle (2.5pt);
 \node at (-.35,-.45) {\tiny $\overset{\lambda_i-1}{+a}$};
 \node at (.85,1) { $\lambda$};
\end{tikzpicture}  \;\;
\begin{tikzpicture}[scale=.8]
 \draw (-.15,.35) node { $\scs i$};
 \draw  (0,0) arc (180:360:0.5cm) [thick];
 \draw[->](1,0) arc (0:180:0.5cm) [thick];
\filldraw  [black] (.9,-.25) circle (2.5pt);
 \node at (1.45,-.5) {\tiny $\overset{-\lambda_i-1}{+b}$};
 \node at (1.15,1.1) { $\;$};
\end{tikzpicture}  }
\end{equation}
For later convenience we set $p_{i,0}(\l) = \la i,\lambda\ra = \lambda_i$.

The bubble sliding equations imply the following power sum slide rule from~\cite{Lau-trace3}
\begin{eqnarray}
    \label{eq_powerslide2}
\hackcenter{ \begin{tikzpicture} [scale=.75]
\draw[thick, , ->] (2.5,0) to (2.5,2) ;
  \node at (1,.8) { $p_{i,r}(\lambda+\alpha_i)$};
 \node at (2.5,-.3) { $j $};
 \node at (3.25, 2.2) { $\lambda $};
\end{tikzpicture}}
 & = &
 \begin{cases}
\hackcenter{ \begin{tikzpicture} [scale=.75]
\draw[thick, , ->] (2.5,0) to (2.5,2) ;
  \node at (3.5,.8) { $p_{i,r}(\lambda)$};
 \node at (2.5,-.3) { $j $};
 \node at (3.25, 2.2) { $\lambda $};
\end{tikzpicture}}
+ 2 \;\; 
\hackcenter{ \begin{tikzpicture} [scale=.75]
\draw[thick, , ->] (2.5,0) to (2.5,2) ;
  \filldraw  (2.5,1.3) circle (2.75pt);
    \node at (2.5,-.3) { $j $};
 \node at (2.8,1.4) { $r$};
 \node at (3.25, 2.2) { $\lambda $};
\end{tikzpicture}} & \mbox{if } i = j, \\ & \\
\hackcenter{ \begin{tikzpicture} [scale=.75]
\draw[thick, , ->] (2.5,0) to (2.5,2) ;
  \node at (3.5,.8) { $p_{i,r}(\lambda)$};
 \node at (2.5,-.3) { $j $};
 \node at (3.25, 2.2) { $\lambda $};
\end{tikzpicture}}
 \;\; -  \;\; (-v_{ij})^r\;\;
\hackcenter{ \begin{tikzpicture} [scale=.75]
\draw[thick, , ->] (2.5,0) to (2.5,2) ;
  \filldraw  (2.5,1.3) circle (2.75pt);
    \node at (2.5,-.3) { $j $};
 \node at (2.8,1.4) { $r$};
 \node at (3.25, 2.2) { $\lambda $};
\end{tikzpicture}}
 & \mbox{if } a_{ij} = -1.
 \end{cases}
\end{eqnarray}

\section{Action of the Witt algebra on $\Ucat$}

\subsection{Defining action of the Witt algebra}

Throughout this section, we assume that $\mathfrak{g}$ is a simply-laced Kac-Moody algebra.

\begin{thm}\label{main}
    Let $(\mu_n)_{n \geq -1} \in \Bbbk^{\infty}$ be a Witt sequence.  There is an action of the  positive half of the Witt algebra $\ourwitt$ on the space of 2-morphisms in $\Ucat(\mathfrak{g})$ defined on generators by $\LLn[-1]$ killing all the generators except for a single strand with a dot which is mapped to an undecorated strand. For all other $n\geq 0$ we define the action as follows:
  \begin{align*}
\LLn[-1]\left(\xy
(0,0)*{ \begin{tikzpicture}[scale=0.6]
    \draw[thick, ->]  (0,-1) to (0,1);
    \node at (.8,-.4) {\scriptsize $\lambda$};
    \filldraw  (0,.2) circle (2.5pt);
    \node at (-.2,-.6) {\tiny $i$};
\end{tikzpicture}} \endxy \right) = \;\; - \xy
(0,0)*{ \begin{tikzpicture}[scale=0.6]
    \draw[thick, ->]  (0,-1) to (0,1);
    \node at (.8,-.4) {\scriptsize $\lambda$};
    \node at (-.2,-.6) {\tiny $i$};
\end{tikzpicture}} \endxy 
     &\qquad  \LLn[n]\left(\xy
(0,0)*{ \begin{tikzpicture}[scale=0.6]
    \draw[thick, ->]  (0,-1) to (0,1);
    \node at (.8,-.4) {\scriptsize $\lambda$};
    \filldraw  (0,.2) circle (2.5pt);
    \node at (-.2,-.6) {\tiny $i$};
\end{tikzpicture}} \endxy \right) = \;\; - \xy
(0,0)*{ \begin{tikzpicture}[scale=0.6]
    \draw[thick, ->]  (0,-1) to (0,1);
    \node at (.8,-.4) {\scriptsize $\lambda$};
    \filldraw  (0,.2) circle (2.5pt);
    \node at (0,.2)[right] {\scriptsize $n+1$};
    \node at (-.2,-.6) {\tiny $i$};
\end{tikzpicture}} \endxy
\qquad  \LLn[n]\left(\xy
(0,0)*{ \begin{tikzpicture}[scale=0.6]
    \draw[thick, ->]  (0,-1) to (0,1);
    \node at (.8,-.4) {\scriptsize $\lambda$};
    \node at (-.2,-.6) {\tiny $i$};
\end{tikzpicture}} \endxy \right) \; = \; 0
\end{align*}
\begin{align*}
    \LLn[n] \left(\CUPweb[1][>][0] \right) = \left(\lambda_i \left(\mu_n + \frac{1}{2}\right) - \frac{n+1}{2}\right) \CUPweb[1][>][n] - \mu_n \;\; p_{i,n}(\lambda) \CUPweb[1][>][0] + \frac{1}{2} \sum_{k + l = n, k \neq 0}  p_{i,k}(\lambda) \CUPweb[1][>][l]
\end{align*}
\begin{align*}
    \LLn[n]\left(\CUPweb[1][<][0]\right) = \left(\lambda_i \left(\mu_n - \frac{1}{2}\right)  - \frac{n+1}{2}\right) \CUPweb[1][<][n] - \mu_n \;\;  p_{i,n}(\lambda) \CUPweb[1][<][0] - \frac{1}{2} \sum_{k + l = n, k \neq 0} p_{i,k}(\lambda) \CUPweb[1][<][l]
\end{align*}
\begin{align*}
    \LLn[n]\left(\CAPweb[1][>][0]\right) = \left(-\lambda_i \left(\mu_n + \frac{1}{2}\right) - \frac{n+1}{2}\right) \CAPweb[1][>][n] + \mu_n \;\; p_{i,n}(\lambda) \CAPweb[1][>][0] - \frac{1}{2} \sum_{k + l = n, k \neq 0}   p_{i,k}(\lambda) \CAPweb[1][>][l]
\end{align*}
\begin{align*} 
    \LLn[n]\left(\CAPweb[1][<][0]\right) = \left(- \lambda_i \left(\mu_n - \frac{1}{2}\right) - \frac{n+1}{2}\right) \CAPweb[1][<][n] + \mu_n \;\; p_{i,n}(\lambda) \CAPweb[1][<][0] + \frac{1}{2} \sum_{k + l = n, k \neq 0}   p_{i,k}(\lambda) \CAPweb[1][<][l]
\end{align*}
\begin{align*}
    \LLn[n]\left(\Xwebwithtext[1][0][0][0][0][i][j]\right) = 
    \begin{cases}
        2\mu_n \left( \Xwebwithtext[1][0][$n$][0][0][$i$][$j$]  - \Xwebwithtext[1][0][0][0][$n$][$i$][$j$] \right) + 
        \displaystyle\sum_{k + l = n} \left(\Xwebwithtext[1][0][0][k][l][i][j]  \right) &\quad \text{if } i=j \\  
        2\mu_n \left( \Xwebwithtext[1][0][0][0][n][i][j] -  \Xwebwithtext[1][0][0][n][0][i][j]\right) 
        - \frac{1}{2} \displaystyle \sum_{k + l = n}  \left( \Xwebwithtext[1][0][0][$k$][$l$][$i$][$j$]  \right) &\quad \text{if } |i - j| = 1 \\
         0 &\quad \text{otherwise}
    \end{cases}
\end{align*}  
where we extend the action of $\LLn[n]$ on composition of $2$-morphisms $f \circ g$ by derivations as $\LLn[n](g\circ f ) = \LLn[n](g) \circ f + g \circ \LLn[n](f)$. 
\end{thm}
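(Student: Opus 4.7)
The plan is to prove well-definedness of the action in two stages: first, verify that the formulas on generators extend consistently to a derivation on 2-morphisms by checking compatibility with every defining relation of $\Ucat(\mathfrak{g})$; second, verify that the resulting operators $\LLn[n]$ satisfy the Witt commutator $[\LLn[n], \LLn[m]] = (n-m)\LLn[n+m]$ on every generator. Since both sides of each identity are derivations, the commutator check reduces to the generating 2-morphisms, and the full action then propagates by the Leibniz rule.

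For the first stage, I would organize the defining relations of $\Ucat(\mathfrak{g})$ into four groups and dispatch each in order. \textbf{(a)} The adjunction and $Q$-cyclicity relations \eqref{eq_cyclic_dot-cyc}--\eqref{eq_crossl-gen-cyc} reduce, after applying the derivation rule, to pure dot/cap/cup identities that can be checked by plugging in the explicit formulas and using the power-sum slide rule \eqref{eq_powerslide2} together with the identity $p_{i,0}(\lambda)=\lambda_i$. \textbf{(b)} For the KLR relations \eqref{eq_r2_ij-gen-cyc}--\eqref{eq:KLRqubic}, the action on crossings and dots is purely polynomial in dots, so one checks that $\LLn[n]$ applied to each side yields the same sum over dot configurations, using that $\LLn[n]$ sends a dot to $-(n+1)$ dots stacked on the same strand. \textbf{(c)} For the infinite Grassmannian and bubble relations \eqref{eq:bubblesarezero}--\eqref{eq:infgrassmanian2pos}, the key observation is that under the isomorphism $\phi^n\maps \Sym \to \END_{\Ucat}(\onel)$ of \eqref{eq_isom_h}--\eqref{eq_isom_e}, the induced action on bubbles should coincide (up to normalization by $\mu_n$) with the Witt action on symmetric functions from Lemma~\ref{actiononfunctions}. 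Specifically, I would show that for a single bubble the image is a polynomial combination of bubbles matching the formulas for $\LLn[n](e_m)$ and $\LLn[n](h_m)$, and the fake-bubble recursions are preserved automatically because they are polynomial identities stable under derivations. \textbf{(d)} The $\mathfrak{sl}_2$ decompositions \eqref{eq:sl2} are handled last, as these mix dots, cups, caps, crossings, and bubbles; here one combines the cap/cup formulas with the bubble action of part (c) and the power-sum slide rule \eqref{eq_powerslide2}, and the cancellations depend crucially on the Witt-sequence hypothesis $n\mu_n - m\mu_m=(n-m)\mu_{m+n}$ specialized to small indices.

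For the second stage, one checks $[\LLn[n], \LLn[m]] = (n-m)\LLn[n+m]$ on each of the five generating 2-morphism types. On the dotted strand this is immediate: both sides produce a combination of stacked dots, and the identity boils down to the Witt-sequence relation. On crossings the check is similarly polynomial. The serious case is on cups and caps: applying $\LLn[m]$ to the right-hand side of $\LLn[n](\cup)$ yields a double sum over $k+l=n$ of power-sum bubbles sliding past cup-with-dots, each of which must be re-expanded using \eqref{eq_powerslide2} and Lemma~\ref{actiononfunctions}, and the antisymmetrization in $(n,m)$ must collapse to a single term of weight $(n-m)\mu_{n+m}$ times the cup with $n+m$ dots. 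The Witt-sequence condition enters exactly where terms involving $\mu_n p_{i,m}$ versus $\mu_m p_{i,n}$ must combine to $\mu_{m+n} p_{i,n+m}$ or cancel against purely numerical contributions of $-(n+1)/2$ and $-(m+1)/2$.

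The main obstacle will be the cap/cup bookkeeping in both stages: the explicit formulas produce $O(n)$ terms each involving power-sum bubbles on different sides of the arc, and each term requires one application of \eqref{eq_powerslide2}. I would manage this by introducing generating-function notation for the bubbles via $\phi^n$, converting the tangled sums into identities in $\Sym$ using Lemma~\ref{actiononfunctions}, and then translating back. The $\mathfrak{sl}_2$ case is easier than it looks because both sides of \eqref{eq:sl2} are expressible in terms of bubbles on the right, so the derivation identity reduces (after cup/cap and crossing contributions cancel) to a Newton-identity-type relation already contained in Lemma~\ref{actiononfunctions}.
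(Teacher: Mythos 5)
Your two-stage architecture --- check every defining relation of $\Ucat(\mathfrak{g})$ on generators, then verify the Witt commutator $[\LLn[a],\LLn[b]]=(a-b)\LLn[a+b]$ on generators, propagating everything by the Leibniz rule --- is exactly the paper's (its Lemma~\ref{lemmabasicrelation} and Lemma~\ref{lem:defining}), and your toolkit (power-sum slide rule~\eqref{eq_powerslide2}, the bubble/symmetric-function dictionary, Newton's identities, and a ``magic cancellation'' in the $\mathfrak{sl}_2$ relation) is the one the paper actually deploys. There is, however, one substantive point your sketch misses: the relation-preservation stage is \emph{not} parameter-free. In your step (b) you assert that the KLR relations reduce to matching ``the same sum over dot configurations,'' but applying $\LLn[n]$ to the double crossing produces $-\sum_{k+l=n}$ of the crossing decorated with $k$ and $l$ dots, and after expanding via the quadratic relation~\eqref{eq_r2_ij-gen-cyc} the resulting sum telescopes to $\LLn[n]$ of the right-hand side only when $t_{ij}=-t_{ji}$ for $i\cdot j=-1$; for generic scalars $Q$ the intermediate terms do not cancel and the relation fails. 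The theorem therefore requires the choice of scalars of Remark~\ref{weylactionstandard}, which the paper records explicitly in Lemma~\ref{lemmabasicrelation}, and a complete write-up must identify where this constraint is forced. A smaller misattribution: the Witt-sequence hypothesis on $(\mu_n)$ is needed only in the commutator stage (your second stage); in the $\mathfrak{sl}_2$-relation check the $\mu_n$-dependent coefficients coming from the cup and cap formulas cancel against each other for each fixed $n$, so no relation among different $\mu_n$'s enters there.
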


From the definition of generating 2-morphisms in Theorem~\ref{main}, the derivation property immediately implies the following Lemmas. Note that the action on the dot morphism carries a minus sign.  

\begin{lemma}\label{actiononbubbles-1}
    The action of $\LLn[-1]$ on bubbles (including fake bubbles) is given as:  
\begin{align*}
    \LLn[-1]\left( \CIRCLEweb[0.5][<][m][\lambda][black][0] \right) &= - (\lambda_i - 1 + m)\CIRCLEweb[0.5][<][m-1][\lambda][black][0] \\
     \LLn[-1]\left( \CIRCLEweb[0.5][>][m][\lambda][black][0] \right) &= - (-\lambda_i - 1 + m)\CIRCLEweb[0.5][>][m-1][\lambda][black][0]
\end{align*}
\end{lemma}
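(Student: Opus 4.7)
The plan is to split the argument into the real and fake bubble cases and exploit in both the derivation property of $\LLn[-1]$ together with the fact that, by Theorem~\ref{main}, $\LLn[-1]$ annihilates cups, caps, and undecorated strands, and sends a single dot to minus the undecorated strand.

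For real bubbles I would first establish, by a one-line induction on $k$, that
\[
\LLn[-1]\bigl(\text{strand with $k$ dots}\bigr) = -k\,\bigl(\text{strand with $k-1$ dots}\bigr),
\]
whose base case is exactly the definition on a single dot, and whose inductive step peels off the top dot via the derivation rule. A real counterclockwise bubble $\CIRCLEweb[0.5][<][m][\lambda][black][0]$ with $\lambda_i - 1 + m \geq 0$ is literally a cup stacked on a strand carrying $\lambda_i - 1 + m$ dots and closed off by a cap. Since $\LLn[-1]$ kills the cup and the cap, the derivation rule immediately produces the stated coefficient $-(\lambda_i - 1 + m)$ in front of $\CIRCLEweb[0.5][<][m-1][\lambda][black][0]$. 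The clockwise real-bubble case is identical after $\lambda_i \leftrightarrow -\lambda_i$.

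For fake bubbles, I would induct on the shift parameter $m$, using the infinite Grassmannian relation
\[
\sum_{a+b=j}\CIRCLEweb[0.5][<][a][\lambda][black][0]\,\CIRCLEweb[0.5][>][b][\lambda][black][0] \;=\; \delta_{j,0}\,\Id_{\onel}, \qquad j \geq 0,
\]
which is a consequence of \eqref{eq:degreezero}, \eqref{eq:infgrassmanian1neg}, and \eqref{eq:infgrassmanian2pos} and expresses each higher fake bubble as a polynomial in lower fake bubbles and real bubbles. Applying $\LLn[-1]$ as a derivation to this identity for $j \geq 1$, substituting the claimed formula for each term whose $\LLn[-1]$ is already known (the real bubbles and, by induction, the lower-shift fake bubbles), and reindexing $a \mapsto a-1$ in one sum and $b \mapsto b-1$ in the other, the coefficients combine as $(\lambda_i + a) + (-\lambda_i + b) = a + b = j-1$, so the total collapses to $-(j-1)\sum_{a+b=j-1}\CIRCLEweb[0.5][<][a][\lambda][black][0]\CIRCLEweb[0.5][>][b][\lambda][black][0]$, which itself vanishes by the Grassmannian relation. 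This forces the action of $\LLn[-1]$ on the single remaining unknown fake bubble to be exactly the stated formula, completing the induction.

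The main subtlety I anticipate is the boundary check at the identity-normalized degree-zero bubble and at the transition between real and fake regimes: applied to $\Id_{\onel}$ the formula must give $0$. This works out because either the coefficient $\lambda_i - 1 + m$ (respectively $-\lambda_i - 1 + m$) vanishes at exactly the normalization shift, or the target $\CIRCLEweb[0.5][<][m-1][\lambda][black][0]$ is a real bubble of strictly negative degree and thus zero by~\eqref{eq:bubblesarezero}, or it is a fake bubble of formal shift below $0$ and thus zero by convention. Once these boundary cases are reconciled, the induction runs uniformly across all orientations and all signs of $\lambda_i$.
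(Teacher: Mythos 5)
Your argument is correct and is essentially the paper's: the paper simply asserts that the lemma ``immediately'' follows from the derivation property of $\LLn[-1]$ together with its vanishing on cups, caps, and undotted strands, which is exactly your real-bubble computation. Your induction through the infinite Grassmannian relation for the fake bubbles (with the coefficient cancellation $(\lambda_i+a)+(-\lambda_i+b)=j-1$ and the boundary checks at the degree-zero normalization) supplies precisely the details the paper leaves implicit, and it checks out.
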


\begin{lemma}\label{actiononbubbles}
    The action of $\LLn[n]$ (for $n \geq 0$) on bubbles (including fake bubbles) respects the action of $\LLn[n]$ on symmetric functions (see \ref{actiononfunctions}) and is given by:
\begin{align*}
    \LLn[n]\left( \CIRCLEweb[0.5][<][m][\lambda][black][0] \right) &= (-n- m) \CIRCLEweb[0.5][<][m+n][\lambda][black][0] + \sum_{k + l = n, k \neq 0} \;\; p_{i,k}(\lambda) \CIRCLEweb[0.5][<][m + l][\lambda][black][0] \\
     \LLn[n]\left( \CIRCLEweb[0.5][>][m][\lambda][black][0] \right) &= (-n- m) \CIRCLEweb[0.5][>][m+n][\lambda][black][0] - \sum_{k + l = n, k \neq 0} \;\; p_{i,k}(\lambda) \CIRCLEweb[0.5][>][m + l][\lambda][black][0]
\end{align*}
\end{lemma}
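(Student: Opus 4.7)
The plan is to reduce the computation to the generators whose $\LLn[n]$-action is specified in Theorem~\ref{main} by decomposing each bubble as a composition of a cap, a dotted vertical strand, and a cup, then invoking the Leibniz rule. Concretely, the counterclockwise bubble $\CIRCLEweb[0.5][<][m][\lambda][black][0]$ represents a closed loop carrying $\lambda_i-1+m$ dots, and can be presented as $\CAPweb[1][<][0] \circ D^{\lambda_i-1+m} \circ \CUPweb[1][<][0]$, where $D$ denotes a single dot placed on one of the two vertical strands produced by the cup (a completely analogous presentation works for the clockwise bubble $\CIRCLEweb[0.5][>][m][\lambda][black][0]$).

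The derivation property of $\LLn[n]$ then yields three contributions. For the middle one, iterated Leibniz together with the defining formula $\LLn[n](D) = -D^{n+1}$ gives $\LLn[n](D^r) = -rD^{r+n}$, and on recomposing with the undifferentiated cap and cup this produces $-(\lambda_i-1+m)\CIRCLEweb[0.5][<][m+n][\lambda][black][0]$. The cap and cup pieces contribute, via the formulas in Theorem~\ref{main}, a collection of terms of the form (scalar)$\cdot\CAPweb[1][<][n]\circ D^r\circ\CUPweb[1][<][0]$ and $\CAPweb[1][<][0]\circ D^r\circ\CUPweb[1][<][n]$, plus $\mu_n$-weighted contributions, plus sums $\tfrac{1}{2}\sum_{k+l=n,k\neq 0}p_{i,k}(\lambda)\,\CAPweb[<][l]$ type pieces. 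The $\mu_n p_{i,n}(\lambda)$ terms produced by cap and cup appear with opposite signs in Theorem~\ref{main}, and so cancel once the full bubble is reassembled; the $\lambda_i(\mu_n\pm\tfrac{1}{2})-\tfrac{n+1}{2}$ scalar pieces combine and their $\mu_n$ dependence likewise cancels while producing the shift needed to go from $-(\lambda_i-1+m)$ to $-(n+m)$ as the coefficient of $\CIRCLEweb[0.5][<][m+n][\lambda][black][0]$. Finally, the $\tfrac{1}{2}\sum_{k+l=n}p_{i,k}(\lambda)$ contributions from cap and cup reassemble, using the power sum slide rule \eqref{eq_powerslide2} (so that $p_{i,k}(\lambda)$ inserted in the loop can be pulled outside with no correction because $j=i$ does not occur), together with the definition \eqref{eq_defpil} of $p_{i,k}(\lambda)$ as an explicit sum of bubble products, into the claimed $\sum_{k+l=n,k\neq 0}p_{i,k}(\lambda)\CIRCLEweb[0.5][<][m+l][\lambda][black][0]$ (respectively with a negative sign in the clockwise case).

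The main obstacle will be the sign and coefficient bookkeeping in the last step: after Leibniz, one obtains many terms labeled by how the offsets are distributed between cap, cup, and dotted strand, and one must reorganize these using the Newton-type identity \eqref{eq_defpil} to recognize the sums of $p_{i,k}(\lambda)$ times bubbles. A conceptually cleaner alternative, which short-circuits most of this, is to first prove the compatibility statement $\LLn[n]\circ\phi^n=\phi^n\circ\LLn[n]$ for the isomorphism $\phi^n\maps\Sym\to\END_{\Ucat}(\onel)$ recalled in Section~\ref{subsec:symmetric functions}: once this is verified on the generators $e_m$ and $h_m$ of $\Sym$ (which is precisely what the direct computation above shows), Lemma~\ref{actiononfunctions} together with $\phi^n(e_m)=(-1)^m\CIRCLEweb[0.5][<][m][\lambda][black][0]$ and $\phi^n(h_m)=\CIRCLEweb[0.5][>][m][\lambda][black][0]$ immediately translates to the two formulas in the statement, with the sign flip in the clockwise case arising from the fact that $\phi^n(p_r)=-p_{i,r}(\lambda)$, as one checks from the generating-function identities.
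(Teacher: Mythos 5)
Your proposal is correct and takes essentially the same route as the paper, which gives no separate argument and simply asserts that the lemma follows from the derivation property applied to the presentation of the bubble as a cap, a dotted strand, and a cup; your bookkeeping of the cancellation of the $\mu_n$-terms and the combination of the scalar coefficients into $-(n+m)$ is exactly what that assertion amounts to. Two cosmetic corrections: the cup and cap in your decomposition must carry opposite orientations (one creates and one closes $\mathcal{E}_i\mathcal{F}_i\onel$), and no power-sum bubble slide is needed at all --- the $p_{i,k}(\lambda)$ factors produced by $\LLn[n]$ on the cup and cap already live in the exterior weight-$\lambda$ region, whereas an actual $i$-through-$i$ slide via \eqref{eq_powerslide2} would introduce a correction term rather than none.
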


\begin{lemma}
    We get the action of $\LLn[n]$ (for $n \geq -1$)  on powersum bubbles as 
\begin{align*}
    \LLn[n]\left(p_{i,m}(\lambda) \right) &= -m \quad p_{i,m+n}(\lambda)~.
\end{align*}
 
\end{lemma}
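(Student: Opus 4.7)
Since $\LLn[n]$ acts by derivations on $\END_{\Ucat}(\onel)$, and by \eqref{eq_defpil} the power sum bubble $p_{i,m}(\lambda)$ is a polynomial in the fundamental bubbles $B_a^{<} := \CIRCLEweb[0.5][<][a][\lambda]$ and $B_b^{>} := \CIRCLEweb[0.5][>][b][\lambda]$, the Leibniz rule together with Lemmas~\ref{actiononbubbles-1} and~\ref{actiononbubbles} determines $\LLn[n](p_{i,m}(\lambda))$ entirely. The plan is to handle the cases $n \geq 0$ and $n = -1$ separately, with the main work being careful sign bookkeeping rather than any conceptual difficulty.

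For $n \geq 0$, I would pass through the isomorphism $\phi^n \colon \Sym \to \END_{\Ucat}(\onel)$ of Section~\ref{subsec:symmetric functions}. A short generating-function computation using $H(t)E(-t) = 1$ together with $\sum_{r \geq 1} p_r t^{r-1} = H'(t)/H(t)$ yields the identification $p_{i,m}(\lambda) = -\phi^n(p_m)$ for $m \geq 1$, which is essentially the Newton identity repackaged. The formulas in Lemma~\ref{actiononbubbles} are then exactly the images under $\phi^n$ of the formulas in Lemma~\ref{actiononfunctions} applied to the generators $e_r$ and $h_r$, once one matches the signs via $\phi^n(e_r) = (-1)^r B_r^{<}$, $\phi^n(h_r) = B_r^{>}$, and $\phi^n(p_k) = -p_{i,k}(\lambda)$. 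Since both actions of $\LLn[n]$ are derivations agreeing on ring generators of $\Sym$, $\phi^n$ intertwines them on all of $\Sym$. Applying $\LLn[n](p_m) = -m\, p_{n+m}$ from Lemma~\ref{actiononfunctions} then gives $\LLn[n](p_{i,m}(\lambda)) = -\phi^n(-m\, p_{n+m}) = -m\, p_{i,n+m}(\lambda)$.

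For $n = -1$ I would compute directly, since $\LLn[-1]$ does not extend cleanly to $\Sym$. Expanding $p_{i,m}(\lambda) = \sum_{a+b=m}(a+1)\, B_a^{<} B_b^{>}$, applying Leibniz, and substituting $\LLn[-1](B_a^{<}) = -(\lambda_i - 1 + a)\, B_{a-1}^{<}$ and $\LLn[-1](B_b^{>}) = (\lambda_i + 1 - b)\, B_{b-1}^{>}$, one reindexes (shifting $a \mapsto a+1$ in the first sum and $b \mapsto b+1$ in the second) to obtain a single sum over $a+b = m-1$. A brief algebraic simplification shows the coefficient of $B_a^{<} B_b^{>}$ is $-m(a+1) + (1 - \lambda_i)$. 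The correction terms with factor $1 - \lambda_i$ assemble into $(1-\lambda_i) \sum_{a+b=m-1} B_a^{<} B_b^{>}$, which vanishes for $m \geq 2$ because under $\phi^n$ this is $\sum_{a+b=m-1}(-1)^a e_a h_b = 0$, a consequence of $H(t)E(-t) = 1$. The remaining terms give $-m\, p_{i,m-1}(\lambda)$, and the boundary $m = 1$ is checked directly: $\LLn[-1](p_{i,1}(\lambda)) = -\lambda_i = -p_{i,0}(\lambda)$. The only genuine obstacle throughout is the sign bookkeeping that establishes the intertwining $\phi^n \circ \LLn[n] = \LLn[n] \circ \phi^n$ on generators; once this is in place, the rest of the argument is formal.
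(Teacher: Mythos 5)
Your proof is correct. For $n \geq 0$ it takes essentially the paper's route: both arguments transfer the question to the ring of symmetric functions via the bubble dictionary of Section~\ref{subsec:symmetric functions}, use that the two actions of $\LLn[n]$ are derivations agreeing on the generators $e_r$, $h_r$ (Lemmas~\ref{actiononfunctions} and~\ref{actiononbubbles}), and pass to power sums by Newton-type identities; your explicit identification $p_{i,m}(\lambda) = -\phi^n(p_m)$, which follows from $p_m = \sum_{a+b=m}(-1)^a b\, e_a h_b$ together with $\sum_{a+b=m}(-1)^a e_a h_b = 0$, is exactly the step the paper leaves implicit. Where you genuinely diverge is the case $n = -1$: the paper handles it by treating $\lambda_i$ as a formal variable and deferring to Lemma~6.7 of~\cite{Elias_2023}, whereas you give a self-contained Leibniz computation. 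Your bookkeeping is right --- with $a+b = m-1$ the coefficient $-(a+2)(\lambda_i + a) + (a+1)(\lambda_i - b)$ does simplify to $-m(a+1) + 1 - \lambda_i$, the correction $(1-\lambda_i)\sum_{a+b=m-1} B_a^< B_b^>$ vanishes for $m \geq 2$ by the infinite Grassmannian relation \eqref{eq:infgrassmanian1neg}--\eqref{eq:infgrassmanian2pos}, and the boundary case $m=1$ (where that sum equals the identity rather than zero) is correctly isolated and checked against $p_{i,0}(\lambda) = \lambda_i$. Your version of the $n=-1$ case buys self-containedness at the cost of a page of reindexing; the paper's buys brevity at the cost of an external citation.
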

\begin{proof}
    While this could be proved by a direct computation, we show that this is a consequence of Lemma~\ref{actiononfunctions}. Since we know how $\LLn[n]$ acts on bubbles by Lemma~\ref{actiononbubbles} and that action follows the action of $\LLn[n]$ on elementary and complete homogeneous symmetric functions, we may apply the fact that those symmetric functions generate the space of symmetric functions and can be used to express the power sum function via a recursive application of Newton's identities from~\eqref{eq:Newton}.

    The case of $\LLn[-1]$ needs to be treated separately as there are some $\lambda_i$'s appearing in Lemma~\ref{actiononbubbles-1}. An argument similar to the one above for the general case works. The only difference is that we need to treat $\lambda_i$'s as a formal variable. See~\cite[Lemma 6.7]{Elias_2023} for details. Note that their action of $\LLn[-1]$, called $\mathbf{z}$ in their work, differs by a minus sign. Additionally, the case when $m = 1$ is taken care of by our definition of $p_{i,0}(\lambda) = \lambda_i$ in~\eqref{eq_defpil}.
\end{proof}

\begin{remark}
    The above proof may be modified to provide an alternative proof for the action of $\LLn[n]$ on $e_m$'s from Lemma~\ref{actiononfunctions} after we establish that the action on $h_m$'s agrees with the action on clockwise bubbles.
\end{remark}

\subsection{Proof of Theorem~\ref{main}}
We prove Theorem~\ref{main} by proving that the $\LLn$ satisfy the relations of $\ourwitt$ and give well-defined 2-functors on $\Ucat$. We split the proof into two lemmas.

\begin{lemma} \label{lem:defining}
    The action of $\LLn[i]$'s described in Theorem~\ref{main} preserves the Witt algebra defining relations, so that $[\LLn[a], \LLn[b]] D = (a- b)\LLn[a+b] D$ for any $2$-morphism $D \in \Ucat(\mathfrak{g})$. Here we do not need to restrict to scalars from Remark~\ref{weylactionstandard} as we are not using any relations in the $2$-category. 
\end{lemma}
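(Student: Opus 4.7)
The strategy is to verify the commutation relation $[\LLn[a], \LLn[b]] D = (a-b) \LLn[a+b] D$ only on the generating 2-morphisms of $\Ucat(\mathfrak{g})$. Since $\LLn[n]$ is extended from generators by the Leibniz rule with respect to both horizontal and vertical composition, the commutator $[\LLn[a], \LLn[b]]$ is automatically a derivation, as is $(a-b)\LLn[a+b]$. Hence it is enough to check the identity on an undecorated strand, a dotted strand, each of the four cups and caps, and each of the three cases of KLR crossing.

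The easy cases go first. The undotted strand is killed by every $\LLn[n]$ with $n \ge 0$, so both sides vanish. For the dot, writing $x$ for a single dot on an $i$-strand, an induction from the rule $\LLn[n](x) = -x^{n+1}$ and the Leibniz rule gives $\LLn[a](x^k) = -k\, x^{a+k}$, whence $[\LLn[a], \LLn[b]](x) = (b-a)\, x^{a+b+1} = (a-b) \LLn[a+b](x)$. The crossing case splits into the three subcases of Theorem~\ref{main}: the disjoint-label case is trivial, and for $i=j$ or $|i-j|=1$ one expands both sides by the Leibniz rule, using the computed action on dots, and matches the resulting convolution sums term by term after reindexing.

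The heart of the argument, and the main obstacle, is the cup/cap case. When $\LLn[a]$ is applied to $\LLn[b](\text{cup})$, it distributes across three kinds of contributions on the right-hand side of Theorem~\ref{main}: the decorated cup $\text{cup}_b$ (on which the Leibniz rule turns dots into higher-dot cups and produces more bubble terms via the cup formula itself), the term $\mu_b\, p_{i,b}(\lambda)\cdot \text{cup}_0$ (which uses the bubble rule $\LLn[a](p_{i,m}(\lambda)) = -m\, p_{i,m+a}(\lambda)$ together with the cup formula), and the convolution sum $\tfrac{1}{2}\sum_{k+l=b,\,k\ne 0} p_{i,k}(\lambda)\cdot \text{cup}_l$ (which uses both rules). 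After expansion and antisymmetrisation, the resulting expression for $[\LLn[a], \LLn[b]](\text{cup})$ organises into three classes of summands: single decorated cups with $a+b+1$ extra dots, products of one power-sum bubble with a dotted cup, and bilinear expressions in two power-sum bubbles. Matching coefficients against $(a-b)\LLn[a+b](\text{cup})$ reduces the identity to two families of scalar relations. The leading relation is precisely the Witt-sequence identity $a\mu_a - b\mu_b = (a-b)\mu_{a+b}$ assumed of $(\mu_n)$; the convolution relations collapse, after reindexing the $k+l$ sums and invoking the power-sum bubble rule a second time, to the same Witt-sequence relation.

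The chief obstacle is therefore the combinatorial bookkeeping needed to match the three classes of terms in the correct groupings so that the Witt-sequence hypothesis on $(\mu_n)$ can be applied at the end. Once the upward-oriented cup case is complete, the other three cup/cap formulas follow from the same template, with only sign changes coming from orientation of the arc. Because no relations of $\Ucat(\mathfrak{g})$ are invoked in this check, there is no need to restrict to the scalars of Remark~\ref{weylactionstandard}.
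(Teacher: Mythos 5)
Your proposal is correct and follows essentially the same route as the paper: both reduce to a direct check on the generating 2-morphisms (justified because the commutator of derivations is again a derivation), expand the cup/cap case using the Leibniz rule together with the rule $\LLn[a](p_{i,m}(\lambda)) = -m\,p_{i,m+a}(\lambda)$, and close the argument by reindexing the convolution sums and invoking the Witt-sequence identity $a\mu_a - b\mu_b = (a-b)\mu_{a+b}$. The paper likewise singles out one cup as the representative computation and notes the $a=-1$ or $b=-1$ cases are simpler, so your additional explicit treatment of the dot and crossing cases is consistent detail rather than a departure.
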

\begin{proof}
    It suffices to verify the claim on generating 2-morphisms by direct computation. We will present the proof for one of the cups. The rest follows similarly. If $a = -1$ or $b = -1$, the computations become simpler as $\LLn[-1]$ acts trivially on the majority of the generators.
\begin{align*}
         \LLn[b]&\left(\LLn[a]\left(\CUPweb[1][>][0]\right)\right) = \LLn[b] \left( \left(\lambda_i \left(\mu_a + \frac{1}{2}\right) - \frac{n+1}{2}\right) \CUPweb[1][>][a] - \mu_a \quad p_{i,a}(\lambda) \CUPweb[1][>][0] + \right. 
         \\
         & \quad + \left. \frac{1}{2} \sum_{k + l = a, k \neq 0} \quad p_{i,k}(\lambda) \CUPweb[1][>][l] \right) = \\
         &= -a \left(\lambda_i \left(\mu_a + \frac{1}{2}\right) - \frac{n+1}{2}\right) \CUPweb[1][>][a + b] + \left(\lambda_i \left(\mu_a + \frac{1}{2}\right) - \frac{n+1}{2}\right) \left(  \left(\lambda_i \left(\mu_b + \frac{1}{2}\right) - \frac{n+1}{2}\right) \CUPweb[1][>][a+ b] - \right. 
\\
         &\quad - \mu_b \quad p_{i,b}(\lambda) \CUPweb[1][>][a]  + \left. \frac{1}{2} \sum_{k + l = b, k \neq 0} \quad p_{i,k}(\lambda) \CUPweb[1][>][l + a]  \right) + a \mu_a \quad p_{i,a + b}(\lambda) \CUPweb[1][>][0] - \\
         &\quad - \mu_a \quad p_{i,a} \left(\left(\lambda_i \left(\mu_b + \frac{1}{2}\right) - \frac{n+1}{2}\right) \CUPweb[1][>][b] - \mu_b \quad p_{i,b}(\lambda) \CUPweb[1][>][0] + \frac{1}{2} \sum_{k + l = b, k \neq 0} \quad p_{i,k}(\lambda) \CUPweb[1][>][l]  \right) - \\
        &\quad - \frac{1}{2}  \sum_{k + l = a, k \neq 0} k \quad p_{i,k + b}(\lambda) \CUPweb[1][>][l] - \frac{1}{2}  \sum_{k + l = a, k \neq 0} l \quad p_{i,k}(\lambda) \CUPweb[1][>][l + b] \\
        &\quad +  \frac{1}{2}  \sum_{k + l = a, k \neq 0} \quad p_{i,k}(\lambda) \left(  \left(\lambda_i \left(\mu_b + \frac{1}{2}\right) - \frac{n+1}{2}\right) \CUPweb[1][>][l+ b] - \mu_b \quad p_{i,b}(\lambda) \CUPweb[1][>][l] + \right. 
        \\ &\quad + \left. \frac{1}{2} \sum_{g + h = b, g \neq 0} \quad p_{i,g}(\lambda) \CUPweb[1][>][l + h]  \right)
    \end{align*}
Comparing terms in $\LLn[b]\left(\LLn[a]\left(\CUPweb[1][>][0]\right)\right)$ and $\LLn[a]\left(\LLn[b]\left(\CUPweb[1][>][0]\right)\right)$ we compute, using the fact that $(\mu_i)_{i\geq -1}$ is a Witt sequence. Without loss of generality, we assume $b \geq a$ and use that for splitting up the summations.
    \begin{align*}
      [\LLn[a]&, \LLn[b]]\left(\CUPweb[1][>][0]\right) = \LLn[b]\left(\LLn[a]\left(\CUPweb[1][>][0]\right)\right) - \LLn[a]\left(\LLn[b]\left(\CUPweb[1][>][0]\right)\right) \\
      &= -a \left(\lambda_i \left(\mu_a + \frac{1}{2}\right) - \frac{n+1}{2}\right) \CUPweb[1][>][a + b]  + b \left(\lambda_i \left(\mu_b + \frac{1}{2}\right) - \frac{n+1}{2}\right) \CUPweb[1][>][a + b] + \\
      &\quad +  a \mu_a \quad p_{i,a + b}(\lambda) \CUPweb[1][>][0]  -  b \mu_b \quad p_{i,a + b}(\lambda) \CUPweb[1][>][0]  -\\
      &\quad - \frac{1}{2}  \sum_{k + l = a, l \neq a} (a - l) \quad p_{i,a + b -l}(\lambda) \CUPweb[1][>][l] - \frac{1}{2}  \sum_{k + l = a, k \neq 0} (a- k) \quad p_{i,k}(\lambda) \CUPweb[1][>][a+ b - k] + \\
      &\quad + \frac{1}{2}  \sum_{k + l = b, l \neq b} (b - l) \quad p_{i, a + b - l}(\lambda) \CUPweb[1][>][l] + \frac{1}{2}  \sum_{k + l = b, k \neq 0} (b- k) \quad p_{i,k}(\lambda) \CUPweb[1][>][a + b - k] 
\end{align*}
Collecting like terms and simplifying, this reduces to    
\begin{align*} 
      &= (b- a) \left(\lambda_i \left(\mu_{a+b} + \frac{1}{2}\right) - \frac{n+1}{2}\right) \CUPweb[1][>][a + b] - (b-a) \mu_{a+b} \quad p_{i,a + b}(\lambda) \CUPweb[1][>][0]+ \\
      &\quad + \frac{b-a}{2} \sum_{k+l = a+b, k \neq 0}  \quad p_{i,k}(\lambda) \CUPweb[1][>][l] \\
      &= (b-a)\LLn[a+b] (\CUPweb[1][>][0])
    \end{align*}
establishing the claim. 
\end{proof}

\begin{lemma}\label{lemmabasicrelation}
The 2-functors $\LLn$ from Theorem~\ref{main} preserve the relations on the $2$-morphisms of $\Ucat(\mathfrak{g})$ for the choice of scalars from Remark~\ref{weylactionstandard}.
\end{lemma}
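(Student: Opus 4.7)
The plan is to verify, one family of defining relations at a time, that the derivations $\LLn[n]$ map both sides of each relation to equal expressions. Since the action is defined on generators and extended by the derivation rule, for any relation $A = B$ in $\Ucat(\mathfrak{g})$ we need only check that $\LLn[n](A) = \LLn[n](B)$ as $2$-morphisms. I would first establish a few ``book-keeping'' computations that are used repeatedly: the action on bubbles (Lemmas~\ref{actiononbubbles-1} and~\ref{actiononbubbles}), the action on power-sum bubbles $p_{i,r}(\lambda)$, and the way these interact with the power-sum slide rule~\eqref{eq_powerslide2}. Most of the verification reduces to pushing $\LLn[n]$ through a planar diagram and repackaging the result in terms of the same bubbles and strands, so having a tidy calculus for moving $p_{i,k}(\lambda)$'s past strands is essential.

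Next I would dispatch the straightforward relations. The adjunction (zig-zag) identities and the cyclicity relations~\eqref{eq_cyclic_dot-cyc}, \eqref{eq_cyclic} follow after checking that the coefficients in the cup/cap formulas in Theorem~\ref{main} are compatible under the biadjunction and orientation-reversal symmetries; here the ``$\pm \tfrac{1}{2}$'' in the $\mu_n$ coefficient is exactly the shift that makes caps and cups behave cyclically under $\LLn[n]$. The dot-sliding relations~\eqref{eq:dotslide} follow immediately from the derivation property together with $\LLn[n](\bullet) = -x^{n+1}$ acting as a derivation of the polynomial subalgebra. The quadratic~\eqref{eq_r2_ij-gen-cyc}, cubic~\eqref{eq:KLRqubic}, and mixed~\eqref{mixed_rel-cyc} KLR relations involve no dots on the boundary (apart from those the derivation creates) and reduce to matching sums indexed by $k+l=n$, which is routine expansion using the formula for the crossing and the fact that the scalars in Remark~\ref{weylactionstandard} satisfy $v_{ij}=-1$ when $a_{ij}=-1$.

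The more delicate verification is for the infinite Grassmannian relations~\eqref{eq:infgrassmanian1neg}--\eqref{eq:infgrassmanian2pos}, since the definitions of fake bubbles are recursive. My approach is to check, by induction on the total degree, that the formulas in Lemma~\ref{actiononbubbles} are consistent with the inductive definition: applying $\LLn[n]$ to the relation $\sum_{x+y=j}(\text{clockwise})_x(\text{counter-clockwise})_y = 0$ (for $j>0$) and using the derivation property gives a new convolution identity whose two sides match exactly by the very same relation at lower $j$. Equivalently, one can transport everything to symmetric functions via the isomorphism~\eqref{eq_isom_h}--\eqref{eq_isom_e} and observe that the $\LLn[n]$-formulas in Lemmas~\ref{actiononbubbles-1}, \ref{actiononbubbles} are precisely the action on $h_m, e_m$ from Lemma~\ref{actiononfunctions}, which is well defined on $\Sym$.

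The main obstacle, as I see it, is the $\mathfrak{sl}_2$ decomposition~\eqref{eq:sl2}. Applying $\LLn[n]$ to each side produces a large alternating sum: on the left, the derivation hits four generators (two cups, two caps, plus possibly a crossing) and yields a sum of terms involving dotted cups/caps and $p_{i,k}(\lambda)$'s; on the right, one gets a triple convolution of dotted bubble pairs where one factor has been enlarged. The plan is to reorganize the right-hand side using the bubble-expansion formulas and the power-sum slide rule~\eqref{eq_powerslide2} (noting that the ``$+2$'' versus ``$-(-v_{ij})^r$'' shift in \eqref{eq_powerslide2} for $i=j$ versus $a_{ij}=-1$ is what balances the $i=j$ and $|i-j|=1$ cases of the crossing formula in Theorem~\ref{main}), and then match coefficients of like monomials in dots on each side. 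The combinatorics of this matching is the crux of the argument, and isolating the Witt-sequence identity $n\mu_n - m\mu_m = (n-m)\mu_{n+m}$ at the appropriate step is what ensures the two sides actually coincide. Once this has been established together with the easier relations, the 2-functor $\LLn[n]$ is well-defined, which together with Lemma~\ref{lem:defining} completes the proof of Theorem~\ref{main}.
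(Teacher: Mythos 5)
Your overall strategy coincides with the paper's: both arguments proceed by directly verifying each defining relation of $\Ucat(\mathfrak{g})$ using the derivation property, the action on (fake) bubbles, and the power-sum slide rule~\eqref{eq_powerslide2}, with the adjunction, cyclicity, dot-slide, quadratic, cubic and mixed relations dispatched by routine expansion, the infinite Grassmannian relations reduced to symmetric-function identities, and the $\mathfrak{sl}_2$ decomposition~\eqref{eq:sl2} singled out as the hardest case and ultimately settled via the bubble/symmetric-function dictionary and Newton's identities.

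There is, however, one genuine error in your account of the crux. You assert that the $\mathfrak{sl}_2$ relation check closes up by ``isolating the Witt-sequence identity $n\mu_n - m\mu_m = (n-m)\mu_{n+m}$ at the appropriate step.'' This cannot be the mechanism: verifying that a \emph{single} operator $\LLn[n]$ preserves a relation $A=B$ only ever involves the single scalar $\mu_n$, since the formulas in Theorem~\ref{main} for $\LLn[n]$ contain no other $\mu_m$; no identity relating $\mu_n$, $\mu_m$ and $\mu_{n+m}$ can enter. Moreover, because $\mu_1$ and $\mu_2$ are free parameters of a Witt sequence, $\LLn[n](A)-\LLn[n](B)$ must vanish identically in $\mu_n$. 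What actually happens is that the terms proportional to $\mu_n$ cancel among themselves --- the ``magic cancellation'' in the sense of~\cite{Elias_2016}, already visible in the quadratic KLR computation where the $\mu_n$-bracket of four dotted double crossings vanishes after dot slides --- and the remaining $\mu_n$-independent terms are matched using the curl formulas of~\cite[Equations 5.23, 5.24]{Lau1} together with Newton's identities and the correspondence of Section~\ref{subsec:symmetric functions}. The Witt-sequence condition on $(\mu_n)$ is used only in Lemma~\ref{lem:defining}, to verify the commutator relations $[\LLn[a],\LLn[b]]=(a-b)\LLn[a+b]$. Aside from this misattribution of where the crucial cancellation comes from, your plan follows the paper's proof.
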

\begin{proof}
    Following the argument of~\cite[Lemma 4.2]{qi2024symmetriesmathfrakglnfoams}, it suffices that the action of $\LLn[n]$'s respects the relations of Definition~\ref{def:2cat}. 
%
This is verified by direct computation.  We give several examples to illustrate the key techniques.   The remaining relations are proven in a similar manner. The adjointness of cups and caps follows after a power sum bubble migration. Which gives
    \begin{align*}
    \LLn[n]\left(\CUPweb[1][>][0][\lambda + \alpha_i]\right) &= \left(\left(\lambda + \alpha_i\right)_i \left(\mu_n + \frac{1}{2}\right) + \frac{n-1}{2} - 2\mu_n \right) \CUPweb[1][>][n][\lambda + \alpha_i] - \\
    \quad & \quad - \mu_n \CUPwebwithtext[1][>][0][\lambda + \alpha_i][black][0][p_{i,k}(\lambda)] + \frac{1}{2} \sum_{k + l = n, k \neq 0}  \CUPwebwithtext[1][>][l][\lambda + \alpha_i][black][0][p_{i,k}(\lambda)]
\end{align*}
From the definitions in Theorem~$\ref{main}$ we also know that 
\begin{align*}
    \LLn[n]\left(\CAPweb[1][>][0]\right) = \left(-\lambda_i \left(\mu_n + \frac{1}{2}\right) - \frac{n+1}{2}\right) \CAPweb[1][>][$n$] + \mu_n \;\; p_{i,n}(\lambda) \CAPweb[1][>][0] - \frac{1}{2} \xsum{k + l = n,} {k \neq 0} \;\; p_{i,k}(\lambda) \CAPweb[1][>][$l$]
\end{align*}
The adjoint relation~\eqref{item_cycbiadjoint-cyc} of $\Ucat$ follows by noting that $(\lambda + \alpha_i)_i = \lambda_i + 2$ and therefore
\begin{align*}
\LLn[n] \left(\hackcenter{\begin{tikzpicture}[scale=0.7]
    \draw[thick, <-](0,0) .. controls ++(0,.6) and ++(0,.6) .. (.75,0) to (.75,-1);
    \draw[thick, ->](0,0) .. controls ++(0,-.6) and ++(0,-.6) .. (-.75,0) to (-.75,1);
    \node at (-.5,-.9) {\scriptsize $\lambda+\alpha_i$};
    \node at (.8,.9) {\scriptsize $\lambda$};
    \node at (.95,-.8) {\tiny $i$};
\end{tikzpicture}}\right)
\;\; = \;\;
    \LLn[n] \left(\hackcenter{\begin{tikzpicture}[scale=0.7]
    \draw[thick, ->](0,-1)   to (0,1);
    \node at (-.8,.4) {\scriptsize $\lambda+\alpha_i$};
    \node at (.5,.4) {\scriptsize $\lambda$};
    \node at (-.2,-.8) {\tiny $i$};
\end{tikzpicture}}\right)
\;\; = \;\;
0
\end{align*} 

Quadratic KLR relations~\eqref{eq_r2_ij-gen-cyc} are important as they help us fix the parameters of the categorified quantum group (see Remark~\ref{weylactionstandard}). Let us look at the $i \cdot j = -1$ case closely. The $i = j$ case is analogous, and the last case is obvious. We will use the KLR quadratic relation and dot sliding relation \eqref{eq:dotslide} in the following computation.  
\begin{align*}
\LLn[n]\left(\hackcenter{
\begin{tikzpicture}[scale=0.8]
    \draw[thick, ->] (0,0) .. controls ++(0,.5) and ++(0,-.4) .. (.75,.8) .. controls ++(0,.4) and ++(0,-.5) .. (0,1.6);
    \draw[thick, ->] (.75,0) .. controls ++(0,.5) and ++(0,-.4) .. (0,.8) .. controls ++(0,.4) and ++(0,-.5) .. (.75,1.6);
    \node at (1.1,1.25) {\scriptsize $\lambda$};
    \node at (-.2,.1) {\tiny $i$};
    \node at (1.1,.1) {\tiny $j$};
\end{tikzpicture}}\right) 
\;\; &= \;\;
2\mu_n \left( \hackcenter{
\begin{tikzpicture}[scale=0.8]
    \draw[thick, ->] (0,0) .. controls ++(0,.5) and ++(0,-.4) .. (.75,.8) .. controls ++(0,.4) and ++(0,-.5) .. (0,1.6);
    \draw[thick, ->] (.75,0) .. controls ++(0,.5) and ++(0,-.4) .. (0,.8) .. controls ++(0,.4) and ++(0,-.5) .. (.75,1.6);
    \node at (1.1,1.25) {\scriptsize $\lambda$};
    \node at (0,.8) {$\bullet$};
    \node at (0,.8)[left] {\scriptsize $n$};
    \node at (-.2,.1) {\tiny $i$};
    \node at (1.1,.1) {\tiny $j$};
\end{tikzpicture}} - \hackcenter{
\begin{tikzpicture}[scale=0.8]
    \draw[thick, ->] (0,0) .. controls ++(0,.5) and ++(0,-.4) .. (.75,.8) .. controls ++(0,.4) and ++(0,-.5) .. (0,1.6);
    \draw[thick, ->] (.75,0) .. controls ++(0,.5) and ++(0,-.4) .. (0,.8) .. controls ++(0,.4) and ++(0,-.5) .. (.75,1.6);
    \node at (.75,.8) {$\bullet$};
    \node at (.75,.8)[right] {\scriptsize $n$};
    \node at (1.1,1.25) {\scriptsize $\lambda$};
    \node at (-.2,.1) {\tiny $i$};
    \node at (1.1,.1) {\tiny $j$};
\end{tikzpicture}} + \hackcenter{
\begin{tikzpicture}[scale=0.8]
    \draw[thick, ->] (0,0) .. controls ++(0,.5) and ++(0,-.4) .. (.75,.8) .. controls ++(0,.4) and ++(0,-.5) .. (0,1.6);
    \draw[thick, ->] (.75,0) .. controls ++(0,.5) and ++(0,-.4) .. (0,.8) .. controls ++(0,.4) and ++(0,-.5) .. (.75,1.6);
    \node at (0.15,1.254) {$\bullet$};
    \node at (0.15,1.254)[left] {\scriptsize $n$};
    \node at (1.1,1.25) {\scriptsize $\lambda$};
    \node at (-.2,.1) {\tiny $i$};
    \node at (1.1,.1) {\tiny $j$};
\end{tikzpicture}} - \hackcenter{
\begin{tikzpicture}[scale=0.8]
    \draw[thick, ->] (0,0) .. controls ++(0,.5) and ++(0,-.4) .. (.75,.8) .. controls ++(0,.4) and ++(0,-.5) .. (0,1.6);
    \draw[thick, ->] (.75,0) .. controls ++(0,.5) and ++(0,-.4) .. (0,.8) .. controls ++(0,.4) and ++(0,-.5) .. (.75,1.6);
    \node at (0.6,1.254) {$\bullet$};
    \node at (0.6,1.254)[right] {\scriptsize $n$};
    \node at  (1.1,0.8)  {\scriptsize $\lambda$};
    \node at (-.2,.1) {\tiny $i$};
    \node at (1.1,.1) {\tiny $j$};
\end{tikzpicture}}    \right) - \\
&\quad - \frac{1}{2} \sum_{k+l = n} \left( \hackcenter{
\begin{tikzpicture}[scale=0.8]
    \draw[thick, ->] (0,0) .. controls ++(0,.5) and ++(0,-.4) .. (.75,.8) .. controls ++(0,.4) and ++(0,-.5) .. (0,1.6);
    \draw[thick, ->] (.75,0) .. controls ++(0,.5) and ++(0,-.4) .. (0,.8) .. controls ++(0,.4) and ++(0,-.5) .. (.75,1.6);
        \node at (0.6,1.254) {$\bullet$};
    \node at (0.6,1.254)[right] {\scriptsize $l$};
    \node at (0.15,1.254) {$\bullet$};
    \node at (0.15,1.254)[left] {\scriptsize $k$};
    \node at (1.1,0.8) {\scriptsize $\lambda$};
    \node at (-.2,.1) {\tiny $i$};
    \node at (1.1,.1) {\tiny $j$};
\end{tikzpicture}}   \right) - \frac{1}{2} \sum_{k+l = n} \left(  \hackcenter{
\begin{tikzpicture}[scale=0.8]
    \draw[thick, ->] (0,0) .. controls ++(0,.5) and ++(0,-.4) .. (.75,.8) .. controls ++(0,.4) and ++(0,-.5) .. (0,1.6);
    \draw[thick, ->] (.75,0) .. controls ++(0,.5) and ++(0,-.4) .. (0,.8) .. controls ++(0,.4) and ++(0,-.5) .. (.75,1.6);
    \node at (1.1,1.25) {\scriptsize $\lambda$};
    \node at (0,.8) {$\bullet$};
    \node at (.75,.8) {$\bullet$};
    \node at (.75,.8)[right] {\scriptsize $k$};
    \node at (0,.8)[left] {\scriptsize $l$};
    \node at (-.2,.1) {\tiny $i$};
    \node at (1.1,.1) {\tiny $j$};
\end{tikzpicture}} \right) \\
&= - \sum_{k+l = n} \left( \hackcenter{
\begin{tikzpicture}[scale=0.8]
    \draw[thick, ->] (0,0) .. controls ++(0,.5) and ++(0,-.4) .. (.75,.8) .. controls ++(0,.4) and ++(0,-.5) .. (0,1.6);
    \draw[thick, ->] (.75,0) .. controls ++(0,.5) and ++(0,-.4) .. (0,.8) .. controls ++(0,.4) and ++(0,-.5) .. (.75,1.6);
        \node at (0.6,1.254) {$\bullet$};
    \node at (0.6,1.254)[right] {\scriptsize $l$};
    \node at (0.15,1.254) {$\bullet$};
    \node at (0.15,1.254)[left] {\scriptsize $k$};
    \node at (1.1,0.8) {\scriptsize $\lambda$};
    \node at (-.2,.1) {\tiny $i$};
    \node at (1.1,.1) {\tiny $j$};
\end{tikzpicture}}   \right)  
 = \sum_{k+ l =n} -t_{i,j}
  \;      \hackcenter{ 
\begin{tikzpicture}[scale=0.8]
    \draw[thick, ->] (0,0) to (0,1.6);
    \draw[thick, ->] (.75,0) to (.75,1.6);
    \node at (1.1,1.25) { $\lambda$};
    \node at (0,.8){$\bullet$};
    \node at (0,.8)[left] {\scriptsize $l + 1$};
    \node at (.75,.8)[right] {\scriptsize $k$};
    \node at (.75,.8) {$\bullet$};
    \node at (-.2,.1) {\tiny $i$};
    \node at (.95,.1) {\tiny $j$};
\end{tikzpicture}}
  \;\; - \;\; t_{j, i} \;
 \hackcenter{
\begin{tikzpicture}[scale=0.8]
    \draw[thick, ->] (0,0) to (0,1.6);
    \draw[thick, ->] (.75,0) to (.75,1.6);
    \node at (1.1,1.25) { $\lambda$}; 
    \node at (0,.8){$\bullet$};
    \node at (0,.8)[left] {\scriptsize $l$};
    \node at (.75,.8)[right] {\scriptsize $k+1$};
    \node at (.75,.8) {$\bullet$};
    \node at (-.2,.1) {\tiny $i$};
    \node at (.95,.1) {\tiny $j$};
\end{tikzpicture}} 
\\
&= -t_{i,j}
  \;      \hackcenter{ 
\begin{tikzpicture}[scale=0.8]
    \draw[thick, ->] (0,0) to (0,1.6);
    \draw[thick, ->] (.75,0) to (.75,1.6);
    \node at (1.1,1.25) { $\lambda$};
    \node at (0,.8){$\bullet$};
    \node at (0,.8)[left] {\scriptsize $n + 1$};
    \node at (-.2,.1) {\tiny $i$};
    \node at (.95,.1) {\tiny $j$};
\end{tikzpicture}}
  \;\; - \;\; t_{j, i} \;
 \hackcenter{
\begin{tikzpicture}[scale=0.8]
    \draw[thick, ->] (0,0) to (0,1.6);
    \draw[thick, ->] (.75,0) to (.75,1.6);
    \node at (1.1,1.25) { $\lambda$}; 
    \node at (.75,.8)[right] {\scriptsize $n+1$};
    \node at (.75,.8) {$\bullet$};
    \node at (-.2,.1) {\tiny $i$};
    \node at (.95,.1) {\tiny $j$};
\end{tikzpicture}}   = \LLn[n]\left(  t_{i,j}
  \;      \hackcenter{ 
\begin{tikzpicture}[scale=0.8]
    \draw[thick, ->] (0,0) to (0,1.6);
    \draw[thick, ->] (.75,0) to (.75,1.6);
    \node at (1.1,1.25) { $\lambda$};
    \node at (0,.8){$\bullet$};
    \node at (-.2,.1) {\tiny $i$};
    \node at (.95,.1) {\tiny $j$};
\end{tikzpicture}}
  \;\; + \;\; t_{j, i} \;
 \hackcenter{
\begin{tikzpicture}[scale=0.8]
    \draw[thick, ->] (0,0) to (0,1.6);
    \draw[thick, ->] (.75,0) to (.75,1.6);
    \node at (1.1,1.25) { $\lambda$}; 
    \node at (.75,.8) {$\bullet$};
    \node at (-.2,.1) {\tiny $i$};
    \node at (.95,.1) {\tiny $j$};
\end{tikzpicture}}  \right) 
\end{align*}
Here we used the assumption that $t_{ij} = - t_{ji}$ for $i \cdot j = -1$.   

The most involved relation is the $\mathfrak{sl}_2$ relation~\eqref{eq:sl2}. In~\cite[Equations A.7a, A.7b]{Elias_2016}, there was a so called ``magic cancellation''. Something similar happens in our case. We will look at $i = j$ case closely.    We start with the computation of the sideways crossing term
\begin{equation} \label{eq: Ln-EF}
 \LLn[n] \left( \;\; 
\hackcenter{\begin{tikzpicture}[scale=0.8]
    \draw[thick,<-] (0,0) .. controls ++(0,.5) and ++(0,-.5) .. (.75,1);
    \draw[thick] (.75,0) .. controls ++(0,.5) and ++(0,-.5) .. (0,1);
    \draw[thick, ->] (0,1 ) .. controls ++(0,.5) and ++(0,-.5) .. (.75,2);
    \draw[thick] (.75,1) .. controls ++(0,.5) and ++(0,-.5) .. (0,2);
        \node at (-.2,.15) {\tiny $i$};
    \node at (.95,.15) {\tiny $i$};
     \node at (1.1,1.44) { $\lambda $};
\end{tikzpicture}}
\;\; \right) := 
    \LLn[n] \left( \;\;
\hackcenter{\begin{tikzpicture}[xscale=-1.0, scale=0.7]
    \draw[thick, ->] (0,0) .. controls (0,.5) and (.75,.5) .. (.75,1.0);
    \draw[thick, ->] (.75,-.5) to (.75,0) .. controls (.75,.5) and (0,.5) .. (0,1.0) to (0,1.5);
    \draw[thick] (0,0) .. controls ++(0,-.4) and ++(0,-.4) .. (-.75,0) to (-.75,1.5);
    \draw[thick, ->] (.75,1.0) .. controls ++(0,.4) and ++(0,.4) .. (1.5,1.0) to (1.5,-.5);
    \node at (-1.3,.55) {\scriptsize  $\lambda$};
    \node at (1.75,-.2) {\tiny $i$};
    \node at (.55,-.2) {\tiny $i$};
    \node at (-.9,1.2) {};
    \node at (.25,1.2) {};
\node at (.525,2.5) {
    \begin{tikzpicture}[xscale=1, scale=0.7]
    \draw[thick, ->] (0,0) .. controls (0,.5) and (.75,.5) .. (.75,1.0);
    \draw[thick, ->] (.75,-.5) to (.75,0) .. controls (.75,.5) and (0,.5) .. (0,1.0) to (0,1.5);
    \draw[thick,<-] (0,0) .. controls ++(0,-.4) and ++(0,-.4) .. (-.75,0) to (-.75,1.5);
    \draw[thick] (.75,1.0) .. controls ++(0,.4) and ++(0,.4) .. (1.5,1.0) to (1.5,-.5);
    \node at (-.9,1.2) {};
    \node at (.25,1.2) {};
    \end{tikzpicture}
    };
\end{tikzpicture}}
\;\; \right)
\end{equation}
We will freely omit (power sum) bubble slides~\eqref{eq_powerslide2} and nilHecke dot slides~\eqref{eq:dotslide} and compute that~\eqref{eq: Ln-EF} expands into many terms that can be simplified to the following
\begin{align*}
    &=-2\hackcenter{\begin{tikzpicture}[xscale=-1.0, scale=0.7]
    \draw[thick, ->] (0,0) .. controls (0,.5) and (.75,.5) .. (.75,1.0);
    \draw[thick, ->] (.75,-.5) to (.75,0) .. controls (.75,.5) and (0,.5) .. (0,1.0) to (0,1.5);
    \draw[thick] (0,0) .. controls ++(0,-.4) and ++(0,-.4) .. (-.75,0) to (-.75,1.5);
    \draw[thick, ->] (.75,1.0) .. controls ++(0,.4) and ++(0,.4) .. (1.5,1.0) to (1.5,-.5);
    \node at (-1.3,.55) {\scriptsize  $\lambda$};
    \node at (1.75,-.2) {\tiny $i$};
    \node at (.55,-.2) {\tiny $i$};
    \node at (-.9,1.2) {};
    \node at (.25,1.2) {};
\node at (0.235,2.5) {
    \begin{tikzpicture}[xscale=1, scale=0.7]
    \draw[thick, ->] (0,0) .. controls (0,.5) and (.75,.5) .. (.75,1.0);
    \draw[thick, ->] (.75,-.5) to (.75,0) .. controls (.75,.5) and (0,.5) .. (0,1.0) to (0,1.5);
    \draw[thick,<-] (0,0) .. controls ++(0,-.4) and ++(0,-.4) .. (-.75,0) to (-.75,1.5);
    \draw[thick] (.75,1.0) .. controls ++(0,.4) and ++(0,.4) .. (1.5,1.0) to (1.5,-.5);
    \node at (1.5,.5){$\bullet$};
    \node at (1.5,.5)[right] {\scriptsize $n$};
    \node at (-.9,1.2) {};
    \node at (.25,1.2) {};
      \end{tikzpicture}
    };
\end{tikzpicture}} + \sum_{k+l = n, k \neq 0}  p_{i,k}(\lambda) \hackcenter{\begin{tikzpicture}[xscale=-1.0, scale=0.7]
    \draw[thick, ->] (0,0) .. controls (0,.5) and (.75,.5) .. (.75,1.0);
    \draw[thick, ->] (.75,-.5) to (.75,0) .. controls (.75,.5) and (0,.5) .. (0,1.0) to (0,1.5);
    \draw[thick] (0,0) .. controls ++(0,-.4) and ++(0,-.4) .. (-.75,0) to (-.75,1.5);
    \draw[thick, ->] (.75,1.0) .. controls ++(0,.4) and ++(0,.4) .. (1.5,1.0) to (1.5,-.5);
    \node at (-1.3,.55) {\scriptsize  $\lambda$};
    \node at (1.75,-.2) {\tiny $i$};
    \node at (.55,-.2) {\tiny $i$};
    \node at (-.9,1.2) {};
    \node at (.25,1.2) {};
\node at (0.3,2.5) {
    \begin{tikzpicture}[xscale=1, scale=0.7]
    \draw[thick, ->] (0,0) .. controls (0,.5) and (.75,.5) .. (.75,1.0);
    \draw[thick, ->] (.75,-.5) to (.75,0) .. controls (.75,.5) and (0,.5) .. (0,1.0) to (0,1.5);
    \draw[thick,<-] (0,0) .. controls ++(0,-.4) and ++(0,-.4) .. (-.75,0) to (-.75,1.5);
    \draw[thick] (.75,1.0) .. controls ++(0,.4) and ++(0,.4) .. (1.5,1.0) to (1.5,-.5);
    \node at (1.5,.5){$\bullet$};
    \node at (1.5,.5)[right] {\scriptsize $l$};
    \node at (-.9,1.2) {};
    \node at (.25,1.2) {};
      \end{tikzpicture}
    };
\end{tikzpicture}}
-2 \sum_{k+l = n, k \neq 0} \hackcenter{\begin{tikzpicture}[xscale=-1.0, scale=0.7]
    \draw[thick, ->] (0,0) .. controls (0,.5) and (.75,.5) .. (.75,1.0);
    \draw[thick, ->] (.75,-.5) to (.75,0) .. controls (.75,.5) and (0,.5) .. (0,1.0) to (0,1.5);
    \draw[thick] (0,0) .. controls ++(0,-.4) and ++(0,-.4) .. (-.75,0) to (-.75,1.5);
    \draw[thick, ->] (.75,1.0) .. controls ++(0,.4) and ++(0,.4) .. (1.5,1.0) to (1.5,-.5);
    \node at (-1.3,.55) {\scriptsize  $\lambda$};
    \node at (0, 1){$\bullet$};
    \node at (0, 1)[right]{\scriptsize $k$};
    \node at (1.75,-.2) {\tiny $i$};
    \node at (.55,-.2) {\tiny $i$};
    \node at (-.9,1.2) {};
    \node at (.25,1.2) {};
\node at (0.3,2.5) {
    \begin{tikzpicture}[xscale=1, scale=0.7]
    \draw[thick, ->] (0,0) .. controls (0,.5) and (.75,.5) .. (.75,1.0);
    \draw[thick, ->] (.75,-.5) to (.75,0) .. controls (.75,.5) and (0,.5) .. (0,1.0) to (0,1.5);
    \draw[thick,<-] (0,0) .. controls ++(0,-.4) and ++(0,-.4) .. (-.75,0) to (-.75,1.5);
    \draw[thick] (.75,1.0) .. controls ++(0,.4) and ++(0,.4) .. (1.5,1.0) to (1.5,-.5);
    \node at (1.5,.5){$\bullet$};
    \node at (1.5,.5)[right] {\scriptsize $l$};
    \node at (-.9,1.2) {};
    \node at (.25,1.2) {};
      \end{tikzpicture}
    };
\end{tikzpicture}} + \\
&+2 \sum_{k+l = n} \hackcenter{\begin{tikzpicture}[xscale=-1.0, scale=0.7]
    \draw[thick, ->] (0,0) .. controls (0,.5) and (.75,.5) .. (.75,1.0);
    \draw[thick, ->] (.75,-.5) to (.75,0) .. controls (.75,.5) and (0,.5) .. (0,1.0) to (0,1.5);
    \draw[thick] (0,0) .. controls ++(0,-.4) and ++(0,-.4) .. (-.75,0) to (-.75,1.5);
    \draw[thick, ->] (.75,1.0) .. controls ++(0,.4) and ++(0,.4) .. (1.5,1.0) to (1.5,-.5);
    \node at (-1.3,.55) {\scriptsize  $\lambda$};
    \node at (0, 1){$\bullet$};
    \node at (0, 1)[right]{\scriptsize $k$};
    \node at (1.75,-.2) {\tiny $i$};
    \node at (.55,-.2) {\tiny $i$};
    \node at (-.9,1.2) {};
    \node at (.25,1.2) {};
\node at (0.3,2.5) {
    \begin{tikzpicture}[xscale=1, scale=0.7]
    \draw[thick, ->] (0,0) .. controls (0,.5) and (.75,.5) .. (.75,1.0);
    \draw[thick, ->] (.75,-.5) to (.75,0) .. controls (.75,.5) and (0,.5) .. (0,1.0) to (0,1.5);
    \draw[thick,<-] (0,0) .. controls ++(0,-.4) and ++(0,-.4) .. (-.75,0) to (-.75,1.5);
    \draw[thick] (.75,1.0) .. controls ++(0,.4) and ++(0,.4) .. (1.5,1.0) to (1.5,-.5);
    \node at (1.5,.5){$\bullet$};
    \node at (1.5,.5)[right] {\scriptsize $l$};
    \node at (-.9,1.2) {};
    \node at (.25,1.2) {};
      \end{tikzpicture}
    };
\end{tikzpicture}} + \sum_{k+ l = n, k \neq n} \sum_{a + b = l-1} 
\hackcenter{\begin{tikzpicture}[xscale=-1.0, scale=0.7]
    \draw[thick] (.75,-0.5) to (.75,1.0);
    \draw[thick, ->]  (0,0) to (0,1.5);
    \draw[thick] (0,0) .. controls ++(0,-.4) and ++(0,-.4) .. (-.75,0) to (-.75,1.5);
    \draw[thick, ->] (.75,1.0) .. controls ++(0,.4) and ++(0,.4) .. (1.5,1.0) to (1.5,-.5);
    \node at (-1.3,.55) {\scriptsize  $\lambda$};
    \node at (1.75,-.2) {\tiny $i$};
    \node at (.25,-.2) {\tiny $i$};
    \node at (0.75, 1){$\bullet$};
    \node at (0.75, 1)[right]{\scriptsize $b$};
    \node at (-.9,1.2) {};
    \node at (.25,1.2) {};
\node at (-0.105,2.5) {
    \begin{tikzpicture}[xscale=1, scale=0.7]
    \draw[thick, ->] (0,0) .. controls (0,.5) and (.75,.5) .. (.75,1.0);
    \draw[thick, ->] (.75,-.5) to (.75,0) .. controls (.75,.5) and (0,.5) .. (0,1.0) to (0,1.5);
    \draw[thick,<-] (0,0) .. controls ++(0,-.4) and ++(0,-.4) .. (-.75,0) to (-.75,1.5);
    \draw[thick] (.75,1.0) .. controls ++(0,.4) and ++(0,.4) .. (1.5,1.0) to (1.5,-.5);
    \node at (1.5,.5){$\bullet$};
    \node at (1.5,.5)[right] {\scriptsize $k+a$};
    \node at (-.9,1.2) {};
    \node at (.25,1.2) {};
      \end{tikzpicture}
    };
\end{tikzpicture}} + \hackcenter{\begin{tikzpicture}[xscale=-1.0, scale=0.7]
    \draw[thick, ->] (0,0) .. controls (0,.5) and (.75,.5) .. (.75,1.0);
    \draw[thick, ->] (.75,-.5) to (.75,0) .. controls (.75,.5) and (0,.5) .. (0,1.0) to (0,1.5);
    \draw[thick] (0,0) .. controls ++(0,-.4) and ++(0,-.4) .. (-.75,0) to (-.75,1.5);
    \draw[thick, ->] (.75,1.0) .. controls ++(0,.4) and ++(0,.4) .. (1.5,1.0) to (1.5,-.5);
    \node at (-1.3,.55) {\scriptsize  $\lambda$};
    \node at (1.75,-.2) {\tiny $i$};
    \node at (.55,-.2) {\tiny $i$};
    \node at (-.9,1.2) {};
    \node at (.25,1.2) {};
\node at (-0.105,2.5) {
    \begin{tikzpicture}[xscale=1, scale=0.7]
    \draw[thick] (0,0) to (0,1.5);
    \draw[thick] (.75,-.5) to (.75,1);
    \draw[thick,<-] (0,0) .. controls ++(0,-.4) and ++(0,-.4) .. (-.75,0) to (-.75,1.5);
    \draw[thick] (.75,1.0) .. controls ++(0,.4) and ++(0,.4) .. (1.5,1.0) to (1.5,-.5);
    \node at (0, 1){$\bullet$};
    \node at (0, 1)[left]{\scriptsize $b$};
    \node at (1.5,.5){$\bullet$};
    \node at (1.5,.5)[right] {\scriptsize $k+a$};
    \node at (-.9,1.2) {};
    \node at (.25,1.2) {};
    \end{tikzpicture}
    };
\end{tikzpicture}}  
\\
&+\left(-\lambda_i\left(\mu_n + \frac{1}{2}\right) - \frac{n+1}{2}\right) \sum_{k +l = n -1}
\hackcenter{\begin{tikzpicture}[xscale=-1.0, scale=0.7]
    \draw[thick] (.75,-0.5) to (.75,1.0);
    \draw[thick, ->]  (0,0) to (0,1.5);
    \draw[thick] (0,0) .. controls ++(0,-.4) and ++(0,-.4) .. (-.75,0) to (-.75,1.5);
    \draw[thick, ->] (.75,1.0) .. controls ++(0,.4) and ++(0,.4) .. (1.5,1.0) to (1.5,-.5);
    \node at (-1.3,.55) {\scriptsize  $\lambda$};
    \node at (1.75,-.2) {\tiny $i$};
    \node at (.25,-.2) {\tiny $i$};
    \node at (0, 1){$\bullet$};
    \node at (0, 1)[right]{\scriptsize $k$};
    \node at (0.75, 1){$\bullet$};
    \node at (0.75, 1)[right]{\scriptsize $l$};
    \node at (-.9,1.2) {};
    \node at (.25,1.2) {};
\node at (0.525,2.5) {
    \begin{tikzpicture}[xscale=1, scale=0.7]
    \draw[thick, ->] (0,0) .. controls (0,.5) and (.75,.5) .. (.75,1.0);
    \draw[thick, ->] (.75,-.5) to (.75,0) .. controls (.75,.5) and (0,.5) .. (0,1.0) to (0,1.5);
    \draw[thick,<-] (0,0) .. controls ++(0,-.4) and ++(0,-.4) .. (-.75,0) to (-.75,1.5);
    \draw[thick] (.75,1.0) .. controls ++(0,.4) and ++(0,.4) .. (1.5,1.0) to (1.5,-.5);
    \node at (-.9,1.2) {};
    \node at (.25,1.2) {};
      \end{tikzpicture}
    };
\end{tikzpicture}}  - \frac{1}{2} \xsum{k+l = n,}{k \neq 0} p_{i,k}(\lambda)
\hackcenter{\begin{tikzpicture}[xscale=-1.0, scale=0.7]
    \draw[thick] (0,0) .. controls (0,.5) and (.75,.5) .. (.75,1.0);
    \draw[thick, ->] (.75,-.5) to (.75,0) .. controls (.75,.5) and (0,.5) .. (0,1.0) to (0,1.5);
    \draw[thick] (0,0) .. controls ++(0,-.4) and ++(0,-.4) .. (-.75,0) to (-.75,1.5);
    \draw[thick, ->] (.75,1.0) .. controls ++(0,.4) and ++(0,.4) .. (1.5,1.0) to (1.5,-.5);
    \node at (0.75, 1){$\bullet$};
    \node at (0.75, 1)[right]{\scriptsize $l$};
    \node at (-1.3,.55) {\scriptsize  $\lambda$};
    \node at (1.75,-.2) {\tiny $i$};
    \node at (.55,-.2) {\tiny $i$};
    \node at (-.9,1.2) {};
    \node at (.25,1.2) {};
\node at (.525,2.5) {
    \begin{tikzpicture}[xscale=1, scale=0.7]
    \draw[thick, ->] (0,0) .. controls (0,.5) and (.75,.5) .. (.75,1.0);
    \draw[thick, ->] (.75,-.5) to (.75,0) .. controls (.75,.5) and (0,.5) .. (0,1.0) to (0,1.5);
    \draw[thick,<-] (0,0) .. controls ++(0,-.4) and ++(0,-.4) .. (-.75,0) to (-.75,1.5);
    \draw[thick] (.75,1.0) .. controls ++(0,.4) and ++(0,.4) .. (1.5,1.0) to (1.5,-.5);
    \node at (-.9,1.2) {};
    \node at (.25,1.2) {};
    \end{tikzpicture}
    };
\end{tikzpicture}} \\ 
&+ \left(\lambda_i\left(\mu_n - \frac{1}{2}\right) - \frac{n+1}{2}\right) \sum_{k +l = n -1}
\hackcenter{\begin{tikzpicture}[xscale=-1.0, scale=0.7]
    \draw[thick, ->] (0,0) .. controls (0,.5) and (.75,.5) .. (.75,1.0);
    \draw[thick, ->] (.75,-.5) to (.75,0) .. controls (.75,.5) and (0,.5) .. (0,1.0) to (0,1.5);
    \draw[thick] (0,0) .. controls ++(0,-.4) and ++(0,-.4) .. (-.75,0) to (-.75,1.5);
    \draw[thick, ->] (.75,1.0) .. controls ++(0,.4) and ++(0,.4) .. (1.5,1.0) to (1.5,-.5);
    \node at (-1.3,.55) {\scriptsize  $\lambda$};
    \node at (1.75,-.2) {\tiny $i$};
    \node at (.55,-.2) {\tiny $i$};
    \node at (-.9,1.2) {};
    \node at (.25,1.2) {};
\node at (.525,2.5) {
    \begin{tikzpicture}[xscale=1, scale=0.7]
    \draw[thick] (0,0) to (0,1.5);
    \draw[thick] (.75,-.5) to (.75,1);
    \draw[thick,<-] (0,0) .. controls ++(0,-.4) and ++(0,-.4) .. (-.75,0) to (-.75,1.5);
    \draw[thick] (.75,1.0) .. controls ++(0,.4) and ++(0,.4) .. (1.5,1.0) to (1.5,-.5);
    \node at (0, 1){$\bullet$};
    \node at (0, 1)[left]{\scriptsize $k$};
    \node at (0.75, 1){$\bullet$};
    \node at (0.75, 1)[left]{\scriptsize $l$};
    \node at (-.9,1.2) {};
    \node at (.25,1.2) {};
    \end{tikzpicture}
    };
\end{tikzpicture}} - \frac{1}{2}\xsum{k+l = n,}{k \neq 0} p_{i,k}(\lambda)
\hackcenter{\begin{tikzpicture}[xscale=-1.0, scale=0.7]
    \draw[thick, ->] (0,0) .. controls (0,.5) and (.75,.5) .. (.75,1.0);
    \draw[thick, ->] (.75,-.5) to (.75,0) .. controls (.75,.5) and (0,.5) .. (0,1.0) to (0,1.5);
    \draw[thick] (0,0) .. controls ++(0,-.4) and ++(0,-.4) .. (-.75,0) to (-.75,1.5);
    \draw[thick, ->] (.75,1.0) .. controls ++(0,.4) and ++(0,.4) .. (1.5,1.0) to (1.5,-.5);
    \node at (-1.3,.55) {\scriptsize  $\lambda$};
    \node at (1.75,-.2) {\tiny $i$};
    \node at (.55,-.2) {\tiny $i$};
    \node at (-.9,1.2) {};
    \node at (.25,1.2) {};
\node at (.525,2.5) {
    \begin{tikzpicture}[xscale=1, scale=0.7]
    \draw[thick, ->] (0,0) .. controls (0,.5) and (.75,.5) .. (.75,1.0);
    \draw[thick, ->] (.75,-.5) to (.75,0) .. controls (.75,.5) and (0,.5) .. (0,1.0) to (0,1.5);
    \draw[thick] (0,0) .. controls ++(0,-.4) and ++(0,-.4) .. (-.75,0) to (-.75,1.5);
    \draw[thick] (.75,1.0) .. controls ++(0,.4) and ++(0,.4) .. (1.5,1.0) to (1.5,-.5);
    \node at (0.0, 0){$\bullet$};
    \node at (0.0, 0)[right]{\scriptsize $l$};
    \node at (-.9,1.2) {};
    \node at (.25,1.2) {};
    \end{tikzpicture}
    };
\end{tikzpicture}}
\end{align*}
This can be further simplified to
\begin{align*}
&= \left(-\lambda_i\left(\mu_n + \frac{1}{2}\right) - \frac{n+1}{2} + (n - l)\right) \sum_{k +l = n -1}
\hackcenter{\begin{tikzpicture}[xscale=-1.0, scale=0.7]
    \draw[thick] (.75,-0.5) to (.75,1.0);
    \draw[thick, ->]  (0,0) to (0,1.5);
    \draw[thick] (0,0) .. controls ++(0,-.4) and ++(0,-.4) .. (-.75,0) to (-.75,1.5);
    \draw[thick, ->] (.75,1.0) .. controls ++(0,.4) and ++(0,.4) .. (1.5,1.0) to (1.5,-.5);
    \node at (-1.3,.55) {\scriptsize  $\lambda$};
    \node at (1.75,-.2) {\tiny $i$};
    \node at (.25,-.2) {\tiny $i$};
    \node at (0, 1){$\bullet$};
    \node at (0, 1)[right]{\scriptsize $k$};
    \node at (0.75, 1){$\bullet$};
    \node at (0.75, 1)[right]{\scriptsize $l$};
    \node at (-.9,1.2) {};
    \node at (.25,1.2) {};
\node at (0.525,2.5) {
    \begin{tikzpicture}[xscale=1, scale=0.7]
    \draw[thick, ->] (0,0) .. controls (0,.5) and (.75,.5) .. (.75,1.0);
    \draw[thick, ->] (.75,-.5) to (.75,0) .. controls (.75,.5) and (0,.5) .. (0,1.0) to (0,1.5);
    \draw[thick,<-] (0,0) .. controls ++(0,-.4) and ++(0,-.4) .. (-.75,0) to (-.75,1.5);
    \draw[thick] (.75,1.0) .. controls ++(0,.4) and ++(0,.4) .. (1.5,1.0) to (1.5,-.5);
    \node at (-.9,1.2) {};
    \node at (.25,1.2) {};
      \end{tikzpicture}
    };
\end{tikzpicture}} 
- \frac{1}{2} \xsum{k+l = n,}{k \neq 0} \sum_{a + b = l - 1}  p_{i,k}(\lambda) \hackcenter{\begin{tikzpicture}[xscale=-1.0, scale=0.7]
    \draw[thick] (.75,-0.5) to (.75,1.0);
    \draw[thick, ->]  (0,0) to (0,1.5);
    \draw[thick] (0,0) .. controls ++(0,-.4) and ++(0,-.4) .. (-.75,0) to (-.75,1.5);
    \draw[thick, ->] (.75,1.0) .. controls ++(0,.4) and ++(0,.4) .. (1.5,1.0) to (1.5,-.5);
    \node at (-1.3,.55) {\scriptsize  $\lambda$};
    \node at (1.75,-.2) {\tiny $i$};
    \node at (.25,-.2) {\tiny $i$};
    \node at (0, 1){$\bullet$};
    \node at (0, 1)[right]{\scriptsize $a$};
    \node at (0.75, 1){$\bullet$};
    \node at (0.75, 1)[right]{\scriptsize $b$};
    \node at (-.9,1.2) {};
    \node at (.25,1.2) {};
\node at (0.525,2.5) {
    \begin{tikzpicture}[xscale=1, scale=0.7]
    \draw[thick, ->] (0,0) .. controls (0,.5) and (.75,.5) .. (.75,1.0);
    \draw[thick, ->] (.75,-.5) to (.75,0) .. controls (.75,.5) and (0,.5) .. (0,1.0) to (0,1.5);
    \draw[thick,<-] (0,0) .. controls ++(0,-.4) and ++(0,-.4) .. (-.75,0) to (-.75,1.5);
    \draw[thick] (.75,1.0) .. controls ++(0,.4) and ++(0,.4) .. (1.5,1.0) to (1.5,-.5);
    \node at (-.9,1.2) {};
    \node at (.25,1.2) {};
      \end{tikzpicture}
    };
\end{tikzpicture}}
\\ 
& \quad + \left(\lambda_i \left(\mu_n - \frac{1}{2}\right) - \frac{n+1}{2} + (n - k)\right) \sum_{k +l = n -1}
\hackcenter{\begin{tikzpicture}[xscale=-1.0, scale=0.7]
    \draw[thick, ->] (0,0) .. controls (0,.5) and (.75,.5) .. (.75,1.0);
    \draw[thick, ->] (.75,-.5) to (.75,0) .. controls (.75,.5) and (0,.5) .. (0,1.0) to (0,1.5);
    \draw[thick] (0,0) .. controls ++(0,-.4) and ++(0,-.4) .. (-.75,0) to (-.75,1.5);
    \draw[thick, ->] (.75,1.0) .. controls ++(0,.4) and ++(0,.4) .. (1.5,1.0) to (1.5,-.5);
    \node at (-1.3,.55) {\scriptsize  $\lambda$};
    \node at (1.75,-.2) {\tiny $i$};
    \node at (.55,-.2) {\tiny $i$};
    \node at (-.9,1.2) {};
    \node at (.25,1.2) {};
\node at (.525,2.5) {
    \begin{tikzpicture}[xscale=1, scale=0.7]
    \draw[thick] (0,0) to (0,1.5);
    \draw[thick] (.75,-.5) to (.75,1);
    \draw[thick,<-] (0,0) .. controls ++(0,-.4) and ++(0,-.4) .. (-.75,0) to (-.75,1.5);
    \draw[thick] (.75,1.0) .. controls ++(0,.4) and ++(0,.4) .. (1.5,1.0) to (1.5,-.5);
    \node at (0, 1){$\bullet$};
    \node at (0, 1)[left]{\scriptsize $k$};
    \node at (0.75, 1){$\bullet$};
    \node at (0.75, 1)[left]{\scriptsize $l$};
    \node at (-.9,1.2) {};
    \node at (.25,1.2) {};
    \end{tikzpicture}
    };
\end{tikzpicture}}
- \frac{1}{2}\xsum{k+l = n,}{k \neq 0} \sum_{a + b = l - 1} p_{i,k}(\lambda) 
\hackcenter{\begin{tikzpicture}[xscale=-1.0, scale=0.7]
    \draw[thick, ->] (0,0) .. controls (0,.5) and (.75,.5) .. (.75,1.0);
    \draw[thick, ->] (.75,-.5) to (.75,0) .. controls (.75,.5) and (0,.5) .. (0,1.0) to (0,1.5);
    \draw[thick] (0,0) .. controls ++(0,-.4) and ++(0,-.4) .. (-.75,0) to (-.75,1.5);
    \draw[thick, ->] (.75,1.0) .. controls ++(0,.4) and ++(0,.4) .. (1.5,1.0) to (1.5,-.5);
    \node at (-1.3,.55) {\scriptsize  $\lambda$};
    \node at (1.75,-.2) {\tiny $i$};
    \node at (.55,-.2) {\tiny $i$};
    \node at (-.9,1.2) {};
    \node at (.25,1.2) {};
\node at (.525,2.5) {
    \begin{tikzpicture}[xscale=1, scale=0.7]
    \draw[thick] (0,0) to (0,1.5);
    \draw[thick] (.75,-.5) to (.75,1);
    \draw[thick,<-] (0,0) .. controls ++(0,-.4) and ++(0,-.4) .. (-.75,0) to (-.75,1.5);
    \draw[thick] (.75,1.0) .. controls ++(0,.4) and ++(0,.4) .. (1.5,1.0) to (1.5,-.5);
    \node at (0, 1){$\bullet$};
    \node at (0, 1)[left]{\scriptsize $b$};
    \node at (0.75, 1){$\bullet$};
    \node at (0.75, 1)[left]{\scriptsize $a$};
    \node at (-.9,1.2) {};
    \node at (.25,1.2) {};
    \end{tikzpicture}
    };
\end{tikzpicture}}
\end{align*}

We may now use~\cite[Equations 5.23, 5.24]{Lau1}  to further simplify the expression. We would like to note that this agrees with the computation of~\cite[Equations A.7a, A.7b]{Elias_2016}, for the case of $n = 1$ (our conventions differ by a sign).
\begin{align*}
    &=\frac{1}{2} \sum_{k+l = n, k \neq 0} \sum_{a + b = l - 1}  \sum_{c + d = a - \lambda_i}  p_{i,k}(\lambda)
    \hackcenter{\begin{tikzpicture}[xscale=-1.0, scale=0.7]
    \draw[thick] (.75,-0.5) to (.75,1.0);
    %
    \draw[thick, ->] (.75,1.0) .. controls ++(0,.4) and ++(0,.4) .. (1.5,1.0) to (1.5,-.5);
    \node at (1.75,-.2) {\tiny $i$};
    \node at (0.75, 1){$\bullet$};
    \node at (0.75, 1)[right]{\scriptsize $b$};
\node at (1,2.5) {
    \begin{tikzpicture}[xscale=1, scale=0.7]
    \draw[thick] (0,0) to (0,1.5);
    \draw[thick,<-] (0,0) .. controls ++(0,-.4) and ++(0,-.4) .. (-.75,0) to (-.75,1.5);
    \node at (0, 1){$\bullet$};
    \node at (0, 1)[right]{\scriptsize $d$};
    \node at (-0.75,1.2)[left] {\tiny $i$};
      \end{tikzpicture}
    };
\end{tikzpicture}}
\CIRCLEweb[0.5][<][c][\lambda][black][0] +  p_{i,k}(\lambda)
    \hackcenter{\begin{tikzpicture}[xscale=-1.0, scale=0.7]
    \draw[thick] (.75,-0.5) to (.75,1.0);
    %
    \draw[thick, ->] (.75,1.0) .. controls ++(0,.4) and ++(0,.4) .. (1.5,1.0) to (1.5,-.5);
    \node at (1.75,-.2) {\tiny $i$};
    \node at (0.75, 1){$\bullet$};
    \node at (0.75, 1)[right]{\scriptsize $d$};
\node at (1,2.5) {
    \begin{tikzpicture}[xscale=1, scale=0.7]
    \draw[thick] (0,0) to (0,1.5);
    \draw[thick,<-] (0,0) .. controls ++(0,-.4) and ++(0,-.4) .. (-.75,0) to (-.75,1.5);
    \node at (0, 1){$\bullet$};
    \node at (0, 1)[right]{\scriptsize $b$};
    \node at (-0.75,1.2)[left] {\tiny $i$};
      \end{tikzpicture}
    };
\end{tikzpicture}}
\CIRCLEweb[0.5][<][c][\lambda][black][0] \\
&-\left(-\lambda_i\left(\mu_n + \frac{1}{2}\right) - \frac{n+1}{2} + (n - l)\right) \sum_{k +l = n -1}   \sum_{a + b = k - \lambda_i} 
    \hackcenter{\begin{tikzpicture}[xscale=-1.0, scale=0.7]
    \draw[thick] (.75,-0.5) to (.75,1.0);
    %
    \draw[thick, ->] (.75,1.0) .. controls ++(0,.4) and ++(0,.4) .. (1.5,1.0) to (1.5,-.5);
    \node at (1.75,-.2) {\tiny $i$};
    \node at (0.75, 1){$\bullet$};
    \node at (0.75, 1)[right]{\scriptsize $l$};
\node at (1,2.5) {
    \begin{tikzpicture}[xscale=1, scale=0.7]
    \draw[thick] (0,0) to (0,1.5);
    \draw[thick,<-] (0,0) .. controls ++(0,-.4) and ++(0,-.4) .. (-.75,0) to (-.75,1.5);
    \node at (0, 1){$\bullet$};
    \node at (0, 1)[right]{\scriptsize $a$};
    \node at (-0.75,1.2)[left] {\tiny $i$};
      \end{tikzpicture}
    };
\end{tikzpicture}}
\CIRCLEweb[0.5][<][b][\lambda][black][0] - \\
 &- \left(\lambda_i\left(\mu_n - \frac{1}{2}\right) - \frac{n+1}{2} + (n - k)\right) \sum_{k +l = n -1} \sum_{a + b = l - \lambda_i} 
    \hackcenter{\begin{tikzpicture}[xscale=-1.0, scale=0.7]
    \draw[thick] (.75,-0.5) to (.75,1.0);
    %
    \draw[thick, ->] (.75,1.0) .. controls ++(0,.4) and ++(0,.4) .. (1.5,1.0) to (1.5,-.5);
    \node at (1.75,-.2) {\tiny $i$};
    \node at (0.75, 1){$\bullet$};
    \node at (0.75, 1)[right]{\scriptsize $a$};
\node at (1,2.5) {
    \begin{tikzpicture}[xscale=1, scale=0.7]
    \draw[thick] (0,0) to (0,1.5);
    \draw[thick,<-] (0,0) .. controls ++(0,-.4) and ++(0,-.4) .. (-.75,0) to (-.75,1.5);
    \node at (0, 1){$\bullet$};
    \node at (0, 1)[right]{\scriptsize $k$};
    \node at (-0.75,1.2)[left] {\tiny $i$};
      \end{tikzpicture}
    };
\end{tikzpicture}}
\CIRCLEweb[0.5][<][b][\lambda][black][0]
\end{align*}

Using Newton’s identities~\eqref{eq:Newton}, which express complete and elementary symmetric functions in terms of power sum functions and lower-order terms, along with the correspondence between bubbles and symmetric functions from Section~\ref{subsec:symmetric functions}, one can verify that the relation holds. The mixed $EF$ relation~\eqref{mixed_rel-cyc} follows as a simpler special case of the same computation. Newton’s identities can also be applied to establish the bubble relation~\eqref{eq:bubblesarezero},~\eqref{eq:degreezero}, using the result of Lemma~\ref{actiononbubbles}.

Applying $\LLn$ to the left side of the cubic relation~\eqref{eq:KLRqubic} can be simplified using dot slides \eqref{eq:dotslide}, cubic~\eqref{eq:KLRqubic} and quadratic relations~\eqref{eq_r2_ij-gen-cyc} to
\begin{align*}
    &\sum_{k+l = n} \left( t_{ij} \tripleuparrow[$k$][0][$l$] - \frac{1}{2}  t_{ij} \tripleuparrow[$l$][$k$] - \frac{1}{2} t_{ij} \tripleuparrow[$k$][$l$] \right. \\
      & \quad -\frac{1}{2}\sum_{a+b = l-1}  \left( t_{ji} \tripleuparrow[$a$][$k+1$][$b$] + t_{ij} \tripleuparrow[$a$][$k$][$b+1$] \right) \\ 
     &\quad - \left.\frac{1}{2}\sum_{a+b = k-1}  \left( t_{ji} \tripleuparrow[$a$][$l+1$][$b$] + t_{ij} \tripleuparrow[$a$][$l$][$b+1$] \right) \right) \\
\end{align*}
It is not hard to see that for $j\cdot i = -1$, $t_{ij} = -t_{ji}$, all the terms cancel out, agreeing with $\LLn$ of the right-hand-side of~\eqref{eq:KLRqubic}.  
The remaining relations are proven using similar techniques.
\end{proof}

Theorem~\ref{main} follows as the $\LLn$ are well-defined 2-functors by Lemma~\ref{lemmabasicrelation} that satisfy the defining relations of $\ourwitt$ by Lemma~\ref{lem:defining}.

\subsection{Specialization to $\mathfrak{sl}_2$ and $\Ucat(\mathfrak{sl}_2)$}
As a special case, the following proposition gives an $\ourwitt$  action on $\Ucat(\mathfrak{sl}_2)$. By restricting the action from Theorem~\ref{main} to just $\LLn[-1], \LLn[0], \LLn[1]$ we answer a question of~\cite{qi2024symmetriesmathfrakglnfoams}, see the second to last paragraph of Section 4.5.

\begin{proposition}
There is an action of the positive half of the Witt algebra $\ourwitt$ on $\Ucat(\mathfrak{sl}_2)$.  By restricting to $\mathfrak{sl}_2$ inside the positive half of the Witt algebra as in Lemma~\ref{lem:wittsl2}, we get an action of $\mathfrak{sl}_2$ on the 2-morphisms of $\Ucat(\mathfrak{sl}_2)$ that agrees with the action in~\cite[Definition 6.13]{Elias_2023}, see also~\cite[Definition 4.6]{Elias_2016} for more choices of $\LLn[1]$.
\end{proposition}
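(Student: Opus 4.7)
The plan is to deduce both claims by specialization from Theorem~\ref{main}. The first claim is immediate: since $\mathfrak{sl}_2$ is simply-laced, Theorem~\ref{main} directly provides an action of $\ourwitt$ on $\Ucat(\mathfrak{sl}_2)$ for every choice of Witt sequence $(\mu_n)_{n \geq -1}$.

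For the second claim, one pulls back along the injection $\iota \maps \mathfrak{sl}_2 \to \ourwitt$ of Lemma~\ref{lem:wittsl2}, so that $e$, $h$, $f$ act by $\LLn[-1]$, $2\LLn[0]$, $-\LLn[1]$ respectively, and then verifies coincidence with \cite[Definition 6.13]{Elias_2023} generator by generator. Since derivations are determined by their values on the dot, the four crossings, and the four cups and caps, the comparison reduces to a finite check. The action of $\LLn[-1]$ simply strips a dot, matching Elias's $e$ immediately; the action of $2\LLn[0]$ on a cup or cap produces the expected weight-dependent degree operator, with the $\mu_0$-dependence canceling between the $\lambda_i(\mu_0 + \tfrac{1}{2})$ contribution and the $-\mu_0\, p_{i,0}(\lambda) = -\mu_0 \lambda_i$ contribution; and the action of $-\LLn[1]$ produces a combination involving a single power-sum bubble, which after rewriting via the Newton identities of Subsection~\ref{subsec_sympoly} and the symmetric-function identifications of Subsection~\ref{subsec:symmetric functions} can be matched against Elias's $f$.

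The main obstacle is bookkeeping the correct normalization. The Witt-sequence relations only begin to constrain $\mu_n$ for $n\geq 2$, so $\mu_0$ and $\mu_1$ remain as free scalar parameters to be tuned. We expect that these two scalars correspond precisely to the ambiguity in the choice of $\LLn[1]$ noted in \cite[Definition 4.6]{Elias_2016}: a single comparison, for instance on the once-dotted rightward cup, should pin them down. Once the values of $\mu_0$ and $\mu_1$ are so fixed, the remaining agreements follow automatically, because the relations of $\Ucat(\mathfrak{sl}_2)$ were already verified in Lemma~\ref{lemmabasicrelation} and the $\mathfrak{sl}_2$ relations follow from the $\ourwitt$ relations established in Lemma~\ref{lem:defining}. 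This simultaneously answers the question of \cite{qi2024symmetriesmathfrakglnfoams}, since the ambient $\ourwitt$-action extends the chosen $\mathfrak{sl}_2$-action.
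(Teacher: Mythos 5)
Your overall route is the paper's: the first claim is exactly the specialization of Theorem~\ref{main} to the rank-one Cartan datum, and the second claim is handled by matching the coefficients of $\LLn[-1],\LLn[0],\LLn[1]$ against the parameters in the Elias--Qi definitions. But one step, as written, does not work. The claim that ``once $\mu_0$ and $\mu_1$ are fixed by a single comparison, the remaining agreements follow automatically because the relations were verified in Lemmas~\ref{lem:defining} and~\ref{lemmabasicrelation}'' is not valid: two derivations that both preserve the relations of $\Ucat(\mathfrak{sl}_2)$ and both complete the same $e$ and $h$ to an $\mathfrak{sl}_2$-action need not coincide even if they agree on one generator --- \cite[Definition 4.6]{Elias_2016} exhibits a multi-parameter family of inequivalent choices of $f$ all compatible with the same $e$ and the same relations. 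You must actually carry out the generator-by-generator comparison you announce at the start of the paragraph, or equivalently locate your $\LLn[1]$ inside the classified family. That is what the paper does: it records the identifications $\lambda\left(\mu_1+\tfrac{1}{2}\right)-1=-\overline{x}_{\lambda-2}$, $-\mu_1+\tfrac{1}{2}=\overline{y}_{\lambda-2}$, $-2\mu_1=a_{\lambda}$, and the simultaneous solvability of these three equations in the single unknown $\mu_1$ is precisely why only \emph{some} of the degree-$2$ derivations of~\cite{Elias_2016} are recovered --- a nontrivial consistency check, not an automatic consequence of the Lie algebra relations.

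Two smaller inaccuracies. The parameter $\mu_0$ is not ``pinned down'' by any comparison: as you yourself observe, it cancels out of $\LLn[0]$ on every generator, so the restricted $\mathfrak{sl}_2$-action depends only on $\mu_1$ and there is exactly one free scalar to match, not two. And the assertion that $\LLn[-1]$ ``matches Elias's $e$ immediately'' overlooks a sign: in the paper's conventions $\LLn[-1]$ sends a dotted strand to \emph{minus} the undotted strand, whereas the operator $z$ of~\cite[Definition 6.13]{Elias_2023} carries the opposite sign. The paper explicitly flags this discrepancy, and it has to be accounted for before one can say the two $\mathfrak{sl}_2$-actions ``agree.''
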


\begin{proof}
    By restricting the Cartan datum of $\Ucat(\mathfrak{g})$ (via $I = \{1\}, X = \Z$), we obtain a version of $\Ucat(\mathfrak{sl}_2)$, see~\cite{Lau-param} for details, hence Theorem~\ref{main} gives an action of the positive half of the Witt algebra on $\Ucat(\mathfrak{sl}_2)$.

    We recover some of the (degree $2$) derivations in~\cite[Definition 4.6]{Elias_2016}, by setting $n =1, i = 1$, $\lambda_1 = \lambda$, and $(\lambda (\mu_1 + \frac{1}{2}) - 1) = -\overline{x}_{\lambda - 2},  - \mu_1 + \frac{1}{2} = \overline{y}_{\lambda - 2}, -2\mu_1 = a_{\lambda}$. The minus signs arise as the action of $z$ in definition 6.13 of~\cite{Elias_2023} on $\Ucat(\mathfrak{sl}_2)$ carries a different sign compared to $\LLn[-1]$.
\end{proof}

\section{Relation to Witt algebra action on foams}
In this section, we examine how the action constructed in this paper arises from the foam-based action studied in~\cite{qi2024symmetriesmathfrakglnfoams}, via categorical skew Howe duality as developed in~\cite{QR}, and generalizing earlier observations in the $\mathfrak{sl}_2$ and $\mathfrak{sl}_3$ cases~\cite{Lau-skew}. 
To be precise, in~\cite{qi2024symmetriesmathfrakglnfoams} they work with equivariant $\mathfrak{gl}_n$ foams of~\cite{Robert2020-xp}. These correspond to the foams in~\cite{QR} where we do not pass to the specialization named $n\mathsf{Foam}^{\bullet}$ in~\cite[Section 4.1]{QR}. In the world of categorified quantum groups, these foams are related, via categorical skew Howe duality, to deformed cyclotomic quotients $\check{\mathcal{U}}_Q^{\lambda,\ast}$ of~\cite{RouQH,Webster_2017}.  These equivariant foams are also closely related to the equivariant flag categories of~\cite{Lau2,KL3}. 

To match with~\cite{QR}, we will work with $\Ucat := (\check{\mathcal{U}}_Q(\mathfrak{sl}_{\infty})^+)^{n,\ast}$ using the choice of scalars from Remark~\ref{weylactionstandard} and $\mathfrak{sl}_{\infty}$ root datum. Here $\Ucat(\mathfrak{sl}_{\infty})$ is the direct limit of $\Ucat(\mathfrak{sl}_{n})$'s, the check mark corresponds to a certain subcategory of the Karoubi envelope and the plus sign stands for the fact that we restrict the categorified quantum group to objects that are increasing in the sense that for $(\lambda_1, \ldots, \lambda_k) = \lambda \in X$ of Definition~\ref{objects2cat}, it holds that $\lambda_1 \leq \lambda_2 \leq \ldots \leq \lambda_k$. The other subscripts indicate the restriction to the deformed cyclotomic quotient of weight $\lambda$.

We use the notation $n\mathsf{Foam} = \bigoplus_{N \geq 1} n \mathsf{Foam}(N)$ for the category of $\mathfrak{gl}_n$ ladderized foams following~\cite{QR}. Since we restrict our focus to ladderized foams, we do not recover the action on the cap and cup foam from~\cite[Equations 68, 69]{qi2024symmetriesmathfrakglnfoams}.

Our definition of the Witt action of $\ourwitt$ is designed to be compatible with the action on foams, and this compatibility guides many of the choices made in the construction. We illustrate in the proof of Theorem~\ref{equivarancefoam}  by working through several examples where the action on the categorified quantum group recovers the expected behavior under foamation.

A feature of categorical skew Howe duality is that multiple distinct foams may be mapped to the same diagram in the categorified quantum group, depending on how we represent an $\mathfrak{sl}_m$ weight in $\mathfrak{gl}_m$ and on the choice of ladderization, see \cite[Remark 2.3.3]{QR}. We exploit this phenomenon to reduce the number of parameters, thereby ensuring compatibility with the defining relations of the categorified quantum group. Skew Howe duality also suggests that power sum bubbles should be represented using thick bubbles, as in~\cite[Equation 3.56]{QR}. This is equivalent to our approach via~\cite[Equations 4.33 and 4.34]{Khovanov_2012} or~\cite[Equations 5.1 and 5.2]{MSV}, provided the bubbles are sufficiently thick (at least $n$ in the image of the action of $\LLn[n]$), so that the relevant determinants behave well. To avoid these technical constraints, we have opted for the definition given in~\eqref{eq_defpil}. 

\begin{remark}
    Interpreting the power sum bubbles as thick bubbles allows one to reformulate the combinatorics of symmetric functions in terms of hook-shaped Young diagrams, offering an alternative perspective on several of our proofs.
\end{remark}

\begin{thm}\label{equivarancefoam}
    Foamation 2-functors $\Phi$ of~\cite[Section 3.2]{QR} from $\Ucat$  into the category of ladderized foams $n\mathsf{Foam}$, intertwine the action of the positive half of the Witt algebra, that is, make the following diagram commute: 
\[
\begin{tikzcd}
\Ucat \arrow[r, "\LLn"] \arrow[d, "\Phi" description] & \Ucat \arrow[d, "\Phi" description] \\
n\mathsf{Foam} \arrow[r, "\LLn"]                                  & n\mathsf{Foam}                                
\end{tikzcd}
\]
Here $\ourwitt$ action on the top arrow for the categorified quantum group is the one from Theorem~\ref{main} and the action on foams is presented in~\cite{qi2024symmetriesmathfrakglnfoams}, fixing the parameters appearing in their work to $ \lambda_n + \mu_n = 0, s = \frac{1}{2} $.   
\end{thm}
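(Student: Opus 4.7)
The plan is to reduce the commutativity of the diagram to a generator-by-generator check. Since the foamation 2-functor $\Phi$ is a $\Bbbk$-linear 2-functor, and since both the action of $\ourwitt$ on $\Ucat$ (Theorem~\ref{main}) and the action on $n\mathsf{Foam}$ from~\cite{qi2024symmetriesmathfrakglnfoams} are defined to act by derivations with respect to vertical and horizontal composition, it suffices to verify that $\Phi(\LLn(g)) = \LLn(\Phi(g))$ for each generating 2-morphism $g$ of $\Ucat$ and for a generating set of $\ourwitt$. By Lemma~\ref{lem:defining} and the corresponding Witt bracket relations satisfied by the foam operators of~\cite{qi2024symmetriesmathfrakglnfoams}, once the equality is established on generators the Lie-algebra structure will automatically be respected; the main content is thus a finite list of direct comparisons on the dot, crossing, cup, and cap 2-morphisms.

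The first step is to recall from~\cite[Section~3.2]{QR} the images under $\Phi$ of each generator: the dot becomes a dotted facet, the crossing becomes the crossing foam, and the cups and caps become the cup/cap foams depicted in the excerpt. For the dot generator, both sides reduce to adding $(n+1)$ dots on the corresponding facet, and the compatibility with the foam action follows immediately once we set the normalization $s = \tfrac{1}{2}$ and $\mu_n = -\lambda_n$ in~\cite{qi2024symmetriesmathfrakglnfoams}. For the upward-upward crossing, one compares the right-hand side of Theorem~\ref{main} with the formula in~\cite[Section~4]{qi2024symmetriesmathfrakglnfoams}, using that the $i=j$ and $|i-j|=1$ cases on the categorified quantum group side correspond precisely to the two basic color configurations of facets meeting at a web vertex; the combinatorial factors $2\mu_n$ and $\tfrac{1}{2}\sum_{k+l=n}$ in Theorem~\ref{main} match the foam coefficients after substituting the constraints from the theorem statement.

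The most delicate step is the cup and cap generators. Here one must recognize that under $\Phi$ the cups and caps of $\Ucat$ map to ladderized cup/cap foams whose action under $\LLn$ in $n\mathsf{Foam}$ is governed by a weighted combination of dots on the relevant facets together with power sum decorations living on the capping disks. The identification of $p_{i,n}(\lambda)$ with thick-bubble expressions, and thence with dotted decorations on the foam facets via~\cite[Equations 3.56]{QR} and~\cite[Equations 4.33--4.34]{Khovanov_2012}, translates the bubble terms on the $\Ucat$ side into the decoration terms on the foam side. The coefficients $\lambda_i(\mu_n \pm \tfrac{1}{2}) - \tfrac{n+1}{2}$ appearing in Theorem~\ref{main} are exactly those forced by the combination of the facet dot contributions and the boundary contributions in the foam action, once $s=\tfrac{1}{2}$ and $\lambda_n + \mu_n = 0$ are imposed; the flexibility noted in~\cite[Remark~2.3.3]{QR} of having multiple foams map to the same $\Ucat$ diagram is what permits these equalities to hold simultaneously for both orientations of cup and cap.

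The hard part will be matching the bubble and power-sum data across the two conventions in a sign- and coefficient-consistent manner. Concretely, one must check that under $\Phi$ the power sums $p_{i,k}(\lambda)$ are sent to the thick-bubble decorations used on the foam side, and that the signs appearing in the cup versus cap formulas of Theorem~\ref{main} correspond to the opposite orientations of facet traversal in the foam picture. Once this dictionary is established, the verification on each generator reduces to algebraic manipulation of symmetric-function identities (Newton's identities) analogous to those used in the proof of Lemma~\ref{lemmabasicrelation}. The remaining relations follow by the derivation property and the fact that $\Phi$ respects all relations of $\Ucat$ as a 2-functor, completing the proof.
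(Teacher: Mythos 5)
Your overall strategy --- reduce to a generator-by-generator comparison of $\Phi(\LLn(g))$ with $\LLn(\Phi(g))$, using that both actions are derivations under composition and that the power-sum bubbles $p_{i,k}(\lambda)$ correspond to thick-bubble decorations on foam facets --- is essentially the paper's. The one substantive point where your write-up falls short is the role of the multiple ladderizations. You cite \cite[Remark 2.3.3]{QR} as something that ``permits'' the cup and cap equalities to hold, but in the paper this multiplicity is the crux of the hardest step rather than a convenience: a single cup of $\Ucat$ admits several ladderized foam images (one adapted to $\lambda>0$, another to $\lambda\le 0$, and others besides), and the foam action of $\LLn$ applied to these \emph{different} images must pull back, through the eventually faithful functor $\Phi$, to one and the same element of $\Ucat$. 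Comparing the two cup ladderizations for $\LLn[1]$ is precisely what produces the constraint $\lambda_1+\mu_1=1-2s$, and iterating for higher $\LLn[i]$ together with the Witt-sequence property yields $s=\tfrac12$ and $\lambda_n+\mu_n=0$; even if you impose these constraints a priori, as your plan does, you must still carry out the computation for \emph{each} ladderization of each generator, since the square is asserted to commute for every foamation functor $\Phi$, not for one preferred choice. As written, your proposal reads as though one representative foam per generator suffices, which would leave the argument incomplete (and would also miss the sign flip coming from the infinite Grassmannian relation when the degree-two bubble reverses orientation in the $\lambda\le 0$ ladderization). Apart from this, your remaining steps --- the dot and crossing checks, the extension from generators by the derivation property and 2-functoriality, and the reduction to a Lie-algebra generating set of $\ourwitt$ --- match the paper's, the last being a mild streamlining the paper does not bother with.
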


\begin{proof}
Our choice of scalars from Remark~\ref{weylactionstandard} is compatible with the choices of~\cite{QR}.  
Under certain conditions, Skew Howe duality becomes an eventually faithful functor, see~\cite[Proposition 3.22]{QR}, provided we work with $\mathfrak{gl}$ in place of $\mathfrak{sl}$ in $\Ucat$. The action defined in Theorem~\ref{main}, will descend to an action on the foams in the image of $\Phi$, if we show that changing $\mathfrak{sl}$ weights to $\mathfrak{gl}$ weights does not change the action

Furthermore, we will show that the action of $\ourwitt$ is intertwined under $\Phi$. For that, we need to show that our coefficients match up with \cite{qi2024symmetriesmathfrakglnfoams}. We will work with $\Ucat(\mathfrak{sl}_2)$ weights and diagrams, the pictures can be directly realized in $\Ucat$, and the techniques extended to other morphisms in that category. We illustrate the proof by considering the action of $\LLn[1]$ and also clarify its relation to the construction in~\cite{Elias_2016}, from the formulas given in~\cite[Section 4.1]{qi2024symmetriesmathfrakglnfoams}. Note that the foams in\cite{qi2024symmetriesmathfrakglnfoams} need not be ladderized, as mentioned above, we do not recover the action on cap and cup foams. The actions of the other operators $\LLn[i]$ can be recovered in a similar fashion. We adopt the notation of~\cite[Section 4.1]{qi2024symmetriesmathfrakglnfoams}, where $s \in \Bbbk$ and $(\mu_i)_{i \geq -1}, (\lambda_i)_{i \geq -1}$ are two Witt sequences.

In the notation of ~\cite[Section 3.2]{QR}, facets colored blue are taken to have thickness $1$. 
First we consider a ladderization, see~\cite{Cautis_2014} for a definition, for a positive value of $\lambda$ as follows
\begin{align*}
     \LLn[1]  \circ \Phi &{\left(\CUPweb[1][<][0][\lambda][black][0][0] \right)} = \LLn[1] \left( \cupEFfoama[0.5] \right) 
     = (\lambda_1 + s) \cupEFfoama[0.5][$p_1$][0][$p_0$] + (\mu_1 + s)\cupEFfoama[0.5][$p_0$][0][$p_1$] \\
    &= (\lambda_1 + s)(\lambda - 1) \cupEFfoama[0.5][][0][0] + (\mu_1 + s) \left( \cupEFfoama[0.5][0][$p_1$][0]  - \cupEFfoama[0.5][][0][0] \right) 
    \end{align*}
It is straightforward to check that this agrees with the action on the categorified quantum group
\begin{align*}
    \Phi^{-1} \circ \LLn[1] \circ \Phi & {\left( \CUPweb[1][<][0][\lambda][black][0][0]\right)} = ((\lambda_1 + s)(\lambda - 1) - \mu_1 - s) \CUPweb[1][<][1][\lambda][black][0][0] + (\mu_1 + s) \CIRCLEweb[0.5][<][1][0][black][0][0] \CUPweb[1][<][0][\lambda][black][0][0]
\end{align*}

Next, we consider a ladderization for a negative or zero value of $\lambda$ as follows
\begin{align*}
   & \LLn[1]  \circ \Phi {\left( \CUPweb[1][<][0][\lambda][black][0][0] \right)} = 
   \LLn[1] \left( \cupEFfoamb[0.5] \right) 
   \\
   & \quad  = (\lambda_1 + s - 1) \cupEFfoamb[0.5][$p_1$][$p_0$][0] + (\mu_1 + s - 1) \cupEFfoamb[0.5][$p_0$][$p_1$][0] 
   \\
    & \quad = (\lambda_1 + s - 1) \left( \cupEFfoamb[0.5][0][0][$p_1$]  -  \cupEFfoamb[0.5][0][$p_1$][0] \right) + (\mu_1 + s - 1)(-\lambda + 1) \cupEFfoamb[0.5][0][$p_1$][0]
\end{align*}
Now considering the preimage of these foams under foamation $\Phi$
\begin{align*}
    \Phi^{-1} \circ \LLn[1] \circ \Phi  {\left( \CUPweb[1][<][0][\lambda][black][0][0] \right)} &= ((\mu_1 + s - 1)(-\lambda + 1) - \lambda_1 - s + 1) \CUPweb[1][<][1][\lambda][black][0][0]  + (\lambda_1 + s - 1) \CIRCLECUPweb[0.5][>][1][\lambda][0][0][>] \\
     &= ((\mu_1 + s - 1)(-\lambda + 1) +(\lambda_1 + s - 1)) \CUPweb[1][<][1][\lambda][black][0][0] + (\lambda_1 + s - 1)\CIRCLEweb[0.5][>][1][0][black][0][0]\CUPweb[1][<][0][\lambda][black][0][0]\\
    &= ((\mu_1 + s - 1)(-\lambda + 1) +(\lambda_1 + s - 1)) \CUPweb[1][<][1][\lambda][black][0][0] - (\lambda_1 + s - 1) \CIRCLEweb[0.5][<][1][0][black][0][0]\CUPweb[1][<][0][\lambda][black][0][0]
\end{align*}
Where in the last equality, the degree $2$ bubble reversed its orientation and created a minus sign according to the infinite Grassmannian relation~\eqref{eq:infgrassmanian1neg},~\eqref{eq:infgrassmanian2pos}.

If we are to obtain an action of $\LLn[1]$ on the cup morphism, that is not given in a piecewise manner and does not depend on how we pass from $\mathfrak{sl}_2$ weights to $\mathfrak{gl}_2$, we must demand that both actions coming from different ladderizations agree. For that to hold true, we must have that 
\begin{align*}
    \lambda_1 + s - 1 &= -\mu_1 - s \\
    \lambda_1 + \mu_1 & = 1-2s 
\end{align*}
Repeating the process for higher differentials $\LLn[i]$, one sees that $s$ must be equal to $\frac{1}{2}$ and $\lambda_n + \mu_n = 0$, using the fact that these are Witt sequences. These are all the constraints we obtain in addition to the ones above.

If we look at other ladderizations for the image of the cup morphism under $\Phi$ such as the one below, we obtain no additional conditions on the parameters as it is straightforward to check.
\begin{align*}
    \LLn[1]  \circ \Phi &{\left(\CUPweb[1][<][0][\lambda][black][0][0]  \right)} = \LLn[1] \left(\cupEFfoam[0.5][0][0][0][0][0] \right) \\
\end{align*}

We will now look at a ladderization of the crossing for $\lambda > 0$, we may compute that
\begin{align*}
    \Phi^{-1} \circ \LLn[1]  \circ \Phi &\left(\Xwebwithtext[1][0][0][0][0][][] \right) =  \Phi^{-1} \circ \LLn[1] \left( \crossingEEfoam[.5] \right) 
\\
&= (\lambda_1 - \mu_1 + 1) \Xwebwithtext[1][0][0][][0][][]  +  (-\lambda_1 + \mu_1 + 1) \Xwebwithtext[1][0][0][0][][][] + ( - \lambda_1 + \mu_1) \quad \TWOweb
\end{align*}

Agreeing with the assignment in Theorem~\ref{main}. Checking the action on other cups and caps is similar, and deducing the action on upward, possibly dotted strands is straightforward.
\end{proof}

\begin{remark}
    The above lemma and its proof show that the action on the Karoubi envelope $\mathsf{Kar}({\Ucat(\mathfrak{g})})$ (that is the thick calculus) can be read from the action on foams via categorical Skew Howe duality and the presented choice of scalars.
\end{remark}

\section{Witt actions and trace decategorifications}
In this section, we restrict to $\mathfrak{g}$ of ADE type and work over a field of characteristic $0$.  
In~\cite{Beliakova_2016,Lau-trace3} it is shown that the trace decategorification $\mathrm{Tr}(\Ucat(\mathfrak{g}))$ is isomorphic to the universal enveloping algebra of the current algebra $U(\mathfrak{g}[t])$.  Furthermore, traces of  natural 2-representations of $\Ucat$ become representations for $U(\mathfrak{g}[t])$.  In particular, the traces of cyclotomic quotients $\mathcal{U}_Q^{\lambda,\ast}$ indexed by a highest weight $\lambda$ become local Weyl modules $W_\lambda^{loc}$ for $U(\mathfrak{g}[t])$, while the deformed cyclotomic quotients $\check{\mathcal{U}}_Q^{\lambda,\ast}$ from \cite{RouQH,Webster_2017} become global Weyl modules $W_{\lambda}$ under trace dectegorification~\cite[Theorem A.]{Lau-trace3}. 
The action of $\ourwitt$ on $\Ucat$ from Theorem~\ref{main} induces an action on $\check{\mathcal{U}}_Q^{\lambda,\ast}$.

Kac shows~\cite{kac1990infinite} that the Witt algebra acts as derivations on the loop algebra $\mf{g}[t,t^{-1}]$ via:
\begin{equation} \label{Witt-Kac}
   \LLn \cdot (x \otimes t^m) = - mx \otimes t^{n+m}, \quad x \in \mf{g}. 
\end{equation} 
Note that this action, when restricted to the polynomial part, agrees with the action of $\ourwitt$ on the polynomials from Lemma~\ref{actiononpoly}.
This action restricts to an action of the positive Witt algebra $\ourwitt$ on the current algebra $U(\mathfrak{g}[t])$.      

\begin{proposition}
In characteristic $0$, the trace decategorification from~\cite{Lau-trace3} $\mathrm{Tr} \maps \Ucat \to \mathrm{Tr}(\Ucat)\cong U(\mathfrak{g}[t])$ intertwines the 
the positive Witt algebra $\ourwitt$ action from Theorem~\ref{main}  with the action from~\eqref{Witt-Kac} making the diagram
       \begin{figure}[h!]
        \centering
\begin{tikzcd}
\Ucat(\mathfrak{g}) \arrow[r, "\LLn"] \arrow[d, "\mathrm{Tr}" description] & \Ucat(\mathfrak{g}) \arrow[d, "\mathrm{Tr}" description] \\
{U(\mathfrak{g}[t])} \arrow[r, "\LLn"]                                     & {U(\mathfrak{g}[t])}                            
\end{tikzcd}
    \end{figure}
commute.  
\end{proposition}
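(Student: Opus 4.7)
The plan is to verify the claim on a generating set of $U(\mathfrak{g}[t])$ and then use the derivation property on both sides to conclude.

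First, I would check that $\LLn[n]$ descends to a well-defined map on $\mathrm{Tr}(\Ucat(\mathfrak{g}))$. Since Theorem~\ref{main} defines $\LLn[n]$ as a derivation on composition of 2-morphisms, for any composable $f, g$ in an endomorphism algebra one has
\begin{align*}
\LLn[n](fg - gf) \;=\; [\LLn[n](f), g] + [f, \LLn[n](g)],
\end{align*}
which again lies in the commutator subspace. Hence $\LLn[n]$ preserves commutators and induces a linear endomorphism of $\mathrm{Tr}(\Ucat(\mathfrak{g})) \cong U(\mathfrak{g}[t])$; the same derivation property promotes the induced map to a derivation of the algebra structure on the trace (coming from horizontal composition in $\Ucat$). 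The fact that these operators still satisfy the Witt relations on the trace is inherited from Lemma~\ref{lem:defining}.

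Next, I would match the two actions on the distinguished elements that generate $U(\mathfrak{g}[t])$ under the trace isomorphism of~\cite{Beliakova_2016, Lau-trace3}. Under this isomorphism, an upward $i$-strand decorated with $k$ dots represents $E_i \otimes t^k$, its downward analogue represents $F_i \otimes t^k$, and the power sum bubble $p_{i,k}(\lambda)$ represents $H_i \otimes t^k$. Applying the derivation property to a composition of $k$ dots on a single strand gives
\begin{align*}
\LLn[n]\!\left(\text{strand with } k \text{ dots}\right) \;=\; -k \cdot (\text{strand with } n+k \text{ dots}),
\end{align*}
which under $\mathrm{Tr}$ becomes $-k\,(E_i \otimes t^{n+k})$, exactly matching Kac's formula~\eqref{Witt-Kac}. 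The analogous computation handles $F_i \otimes t^k$, while the identity $\LLn[n](p_{i,m}(\lambda)) = -m\,p_{i,m+n}(\lambda)$ established earlier matches Kac's formula on $H_i \otimes t^m$.

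To finish, both operators on $U(\mathfrak{g}[t])$ are derivations of an associative algebra (Kac's action is the unique extension of the Lie algebra derivation on $\mathfrak{g}[t]$) and they agree on an algebra generating set, so they coincide. The main obstacle I anticipate is pinning down the precise correspondence between the power sum bubbles $p_{i,k}(\lambda)$ from~\eqref{eq_defpil} and the generators $H_i \otimes t^k$ under the trace isomorphism, including tracking any weight-dependent normalizations and making sure that the bubble sliding relations~\eqref{eq_powerslide2} used when closing diagrams are compatible with the Witt action. Consistency with Kac's formula in fact forces these normalizations to be independent of $k$ for dotted strands and for power sum bubbles alike, which must be extracted from the explicit trace formulas of~\cite{Lau-trace3}; given those, the verification reduces to direct bookkeeping.
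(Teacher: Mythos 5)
Your argument is correct and takes essentially the same route as the paper, whose proof is a one-line observation that the trace of the Witt action agrees with Kac's action \eqref{Witt-Kac} on the polynomial part of $U(\mathfrak{g}[t])$ (via Lemma~\ref{actiononpoly}). Your version merely fills in the details the paper leaves implicit --- that the derivation property makes $\LLn$ descend to the trace, and the generator-by-generator match for dotted strands and power sum bubbles against the formula $\LLn\cdot(x\otimes t^m)=-mx\otimes t^{n+m}$ --- all consistent with the paper's intended argument.
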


\begin{proof}
This is an immediate consequence of the fact that the  trace decategorification  of the action described in Lemma~\ref{actiononpoly} agreeing with the action from~\eqref{Witt-Kac} on the polynomial part of $U(\mathfrak{g}[t])$.  
\end{proof}

While the derivations on the current algebra are somewhat intuitive, our result suggests that even this natural action can be understood as having representation-theoretic origins, all arising from the higher structure of categorified quantum groups.

\subsection*{Further directions}
We would like to point out that our actions of $\LLn[1]$ do not recover all the degree $2$ derivations on $\Ucat(\mathfrak{sl}_2)$ as presented in~\cite{Elias_2016}. One should be able to enlarge the parameter space of our action, by classifying $\LLn[2]$. This might recover more of the actions of $\ourwitt$ on foams.

The authors would be interested in extending the action of $\ourwitt$ to an action of the whole Witt algebra or to an action of the Virasoro algebra by adding $\LLn$ for $n < -1$. 

Another point of interest is studying whether we can recover actions on  $2$-representations as coming from an action on the categorified quantum groups, as we were able to do for foams or Schur quotients with our choice of scalars. In particular, could this be related to known actions on the cyclotomic quotients or their trace decategorifications, which are known to be Weyl modules~\cite{Lau-trace3, Lau-trace2}.

%

\providecommand{\bysame}{\leavevmode\hbox to3em{\hrulefill}\thinspace}
\providecommand{\MR}{\relax\ifhmode\unskip\space\fi MR }
\providecommand{\MRhref}[2]{%
  \href{http://www.ams.org/mathscinet-getitem?mr=#1}{#2}
}
\providecommand{\href}[2]{#2}

\end{document}